\documentclass[11pt,reqno]{amsart}
\usepackage[top=1.2in, bottom=1.2in, left=1.2in, right=1.2in]{geometry}
\usepackage{amsfonts, amssymb, amsmath, mathtools, dsfont, xcolor}%, enumerate, bm, xspace, graphicx, caption, subcaption}
\usepackage[linktoc=page, colorlinks, linkcolor=blue, citecolor=blue]{hyperref}
\usepackage{enumerate, commath}

\newtheorem{theorem}{Theorem}[section]

\newtheorem{proposition}[theorem]{Proposition}
\newtheorem{lemma}[theorem]{Lemma}

\newtheorem{definition}[theorem]{Definition}

\newtheorem{remark}[theorem]{Remark}

\def\e{\varepsilon}
\def\irr#1{{\rm Irr}(#1)}
\def\irrr#1#2 {\irr {#1 \mid #2}}
\providecommand{\norm}[1]{\left\lVert#1\right\rVert}\newcommand{\R}{\mathbb R}

\newcommand{\B}{\mathcal{B}}
\newcommand{\Z}{\mathbb Z}
\newcommand{\sfe}{{{\mathbb S}^{n-1}}}

\newcommand{\E}{\mathbb E}

\newcommand{\Le}{\mathcal{L}}

\def\Prob{{\mathbb P}}
\def\Event{{\mathcal E}}

\def \HS {\mathrm{HS}}
\def \tran {\mathsf{T}}

\renewcommand{\Pr}[2][]{\mathbb{P}_{#1} \left\{ #2 \rule{0mm}{3mm}\right\}}
\newcommand{\ip}[2]{\langle#1,#2\rangle}
\DeclareMathOperator{\Span}{span}
\DeclareMathOperator{\dist}{dist}
\DeclareMathOperator{\supp}{supp}
\DeclareMathOperator{\Sparse}{Sparse}
\DeclareMathOperator{\Comp}{Comp}
\DeclareMathOperator{\Incomp}{Incomp}
\DeclareMathOperator{\RLCD}{RLCD}
\DeclareMathOperator{\Var}{Var}
\DeclareMathOperator{\Cov}{Cov}

\DeclareMathOperator{\Unif}{Unif}

\def \l {{\lambda}}
\def \EE {\mathcal{E}}

\begin{document}

\title{The smallest singular value of inhomogeneous square random matrices}

\author{Galyna V. Livshyts, Konstantin Tikhomirov \and Roman Vershynin}

\address{Georgia Institute of Technology}
\email{glivshyts6@math.gatech.edu, konstantin.tikhomirov@math.gatech.edu}

\address{University of California, Irvine}
\email{rvershyn@uci.edu}

\thanks{G.L. was partly supported by NSF grant CAREER DMS-1753260. R.V. was partly supported by U.S. Air Force grant FA9550-18-1-0031, NSF grants DMS-1954233 and DMS-2027299, and U.S. Army Grant 76649-CS}

%\affiliation{University of California, Irvine}
%\and
%\author{\fnms{???} \snm{???}\ead[label=e2]{???}}
%\address{\printead{e2}}
%\affiliation{???}

\begin{abstract}
We show that for an $n\times n$ random matrix $A$
with independent uniformly anti-concentrated entries, such that $\E \|A\|^2_{\HS}\leq K n^2$, 
the smallest singular value $\sigma_n(A)$ of $A$ satisfies
$$
\Pr{ \sigma_n(A)\leq \frac{\varepsilon}{\sqrt{n}} } 
\leq C\varepsilon+2e^{-cn},\quad \varepsilon \ge 0.
$$
This extends earlier results \cite{RudVer-square, RebTikh}
by removing the assumption of mean zero and identical distribution of the entries across the matrix,
as well as the recent result  \cite{Liv} where the matrix was required to have i.i.d.\ rows.
Our model covers inhomogeneous matrices
allowing different variances of the entries, as long as the sum of the second moments
is of order $O(n^2)$.

In the past advances, the assumption of i.i.d.\ rows was required
due to lack of Little\-wood--Offord--type inequalities for weighted sums of non-i.i.d.\ random variables.
Here, we overcome this problem by
introducing {\it the Randomized Least Common Denominator} (RLCD) which allows to study anti-concentration properties of weighted sums of independent but not identically distributed variables. We construct efficient nets on the sphere with
{\it lattice structure}, and show that the lattice points typically have large RLCD.
This allows us to derive strong anti-concentration properties for the distance
between a fixed column of $A$ and the linear span of the remaining columns, and prove the main result.
\end{abstract}
\maketitle
%\tableofcontents

\section{Introduction}

Given a random matrix $A$, the question of fundamental interest is: \emph{how likely is $A$ to be invertible}, and, more quantitatively, {\em well conditioned}?
% In other words, how likely is $A$ to not compress the space to a subspace? By linearity, the whole information about $A$ can be obtained from its action on the unit sphere. A matrix which is ``well invertible'' sends the unit sphere to a ``fat'' ellipsoid. 
%Suppose $A$ is an $n\times n$ matrix acting as a linear operator on from $\R^n$ to $\R^N$. We recall that the semi-axes of the ellipsoid $A(B_2^n)$ are the singular values of $A$. In other words, 
These questions can be expressed in terms of the singular values
$\sigma_1(A)\geq\dots\geq \sigma_n(A) \ge 0$, which are defined as the square roots of the 
eigenvalues of $A^\tran A$. 
The extreme singular values are especially interesting. They can be expressed as 
\begin{equation}	\label{eq: s1 sn}
\sigma_1(A)=\max_{x\in\sfe} |Ax|
\quad \text{and} \quad 
\sigma_n(A)=\min_{x\in\sfe} |Ax|,
\end{equation}
where $\sfe$ is the unit Euclidean sphere in $\R^n$.
In this paper, we will be concerned with the smallest singular value $\sigma_n(A)$. 
Its value is nonzero if and only if $A$ is invertible, 
and the magnitude of $\sigma_n(A)$ provides us with a quantitative measure of invertibility. 

The behavior of the smallest singular values of random matrices have been extensively studied \cite{BaiYin, BVW, cook, KKS, LPRT, LitRiv,
Liv, MenPao, RebTikh, Rud-square, RudVer-square, RudVer-general, tatarko, taovu-1, taovu, taovu-annals, taovu-tall,
Tikh, Tikh-nomoments, Tikh1, TikhErd, Versh-tall}. For Gaussian random matrices with i.i.d. $N(0,1)$ entries, 
the magnitude of $\sigma_n(A)$ is of order $1/\sqrt{n}$ with high probability. This observation 
goes back to von Neumann and Goldstine \cite{Neuman}, and it was rigorously verified, with precise 
tail bounds, by Edelman \cite{edelman} and Szarek \cite{szarek}. 
Extending this result beyond the Gaussian distribution is non-trivial due to the absence of rotation invariance. After the initial progress by Tao and Vu \cite{taovu-annals} and Rudelson \cite{Rud-square}, the following lower bound on $\sigma_n(A)$ was proved by Rudelson and Vershynin \cite{RudVer-square} for matrices with sub-gaussian, mean zero, unit variance, i.i.d. entries:
\begin{equation}	\label{eq: RV}
\Prob\left\{ \sigma_n(A) \le \frac{\varepsilon}{\sqrt{n}} \right\}
\le C \varepsilon + 2 e^{-cn},
\quad \e \ge 0.
\end{equation}
This result is optimal up to positive constants $C$ and $c$ (depending only on the subgaussian moment).
It has been further extended and sharpened in various ways \cite{Liv, RebTikh, RudVer-general, tatarko, Versh-tall}.
In particular, Rebrova and Tikhomirov \cite{RebTikh} relaxed 
the sub-gaussian assumption on the distribution of the entries to just having unit variance. 

It has remained unclear, however, if one can completely drop the assumption of the identical distribution of the entries of $A$. The identical distribution seemed to be crucial in the existing versions of the Littlewood--Offord theory \cite{LitOf}, which allowed to  handle arithmetic structures that arise in the invertibility problem for random matrices. 
A partial result was obtained recently by Livshyts \cite{Liv} who proved \eqref{eq: RV} under the assumption that the rows of $A$ are identically distributed (the entries must be still independent but not necessarily i.i.d). In the present paper we remove the latter requirement as well, and thus prove \eqref{eq: RV} without any identical distribution assumptions whatsoever.

We only assume the following about the entries of $A$: (a) they are independent; (b) the sum of their second moments is $O(n^2)$, which is weaker than assuming that each entry has unit second moment; (c) their distributions are uniformly anti-concentrated, i.e. not concentrated around any single value. The latter assumption is convenient to state in terms of 
the L\'evy concentration function, which for a random variable $Z$ is defined as
$$
\Le(Z,t):=\sup_{u\in\R}\Prob\{|Z-u|<t\}, \quad t \ge 0.
$$
The following is our main result. 

\begin{theorem}[Main] \label{mainthm1}
  Let $A$ be an $n\times n$ random matrix whose entries $A_{ij}$ are independent and satisfy 
  $\sum_{i,j=1}^n \E A_{ij}^2 \leq Kn^2$ for some $K>0$ and 
  $\max_{i,j} \Le(A_{ij},1)\leq b$ for some $b\in (0,1)$. 
%\item $\sum_{i=1}^n \left(\E |Ae_i|^{2p}\right)^{\frac{1}{p}}\leq Kn^2$ and $\sum_{i=1}^n \left(\E |A^\tran e_i|^{2p}\right)^{\frac{1}{p}}\leq Kn^2$
%\item For all $i\in \{1,...,n\}$, we have $\E||Ae_i||_q\leq Kn^{\frac{1}{q}}$.
  Then
  $$
  \Prob\left\{\sigma_n(A)\leq \frac{\varepsilon}{\sqrt{n}}\right\}\leq C\varepsilon+2e^{-cn},
  \quad \varepsilon \ge 0.
  $$
  Here $C,c>0$ depend only on $K$ and $b$.
\end{theorem}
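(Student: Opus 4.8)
The plan is to run the Rudelson--Vershynin invertibility scheme \cite{RudVer-square} and supply the missing non-i.i.d.\ Littlewood--Offord input through the RLCD. Call a unit vector \emph{compressible} if it is within Euclidean distance $\rho$ of a vector supported on at most $\delta n$ coordinates, and \emph{incompressible} otherwise, for small constants $\delta,\rho$ to be fixed. First I would dispose of the spectral norm: since $\E\|A\|_{\HS}^2\le Kn^2$, only a small linear fraction of the rows and columns of $A$ can have Euclidean norm exceeding $C_0\sqrt n$, and, following the regularization of Rebrova--Tikhomirov \cite{RebTikh}, the contribution of matrices with abnormally large rows or columns is treated by a separate direct estimate; this reduces the main bound to configurations in which the operator norms of $A$ and of its column submatrices are $O(\sqrt n)$, and it is the only place the hypothesis on $\E\|A\|_{\HS}^2$ is used. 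For compressible $x$, a small-ball estimate for $|Ax|$ --- obtained by tensorizing the anti-concentration hypothesis $\Le(A_{ij},1)\le b$ over the rows (Kolmogorov--Rogozin for the spread part, a single anti-concentrated coordinate for the very sparse part) --- combined with a union bound over a net of the compressible vectors of cardinality $e^{o(n)}$ gives $\Prob\{\inf_{x\text{ compressible}}|Ax|\le c_1\sqrt n\}\le e^{-c_2 n}$, which is stronger than needed.

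For incompressible $x$ I would use the invertibility-via-distance lemma: writing $A_k$ for the $k$-th column and $H_k=\Span\{A_j: j\ne k\}$, one has
\[
\Prob\Big\{\inf_{x\text{ incompressible}}|Ax|\le \tfrac{\e}{\sqrt n}\Big\}\ \le\ \frac{1}{\delta n}\sum_{k=1}^n \Prob\big\{\dist(A_k,H_k)\le C_4\e\big\},
\]
so it suffices to prove $\Prob\{\dist(A_k,H_k)\le t\}\le Ct+2e^{-cn}$ for each $k$. Fixing $k$ and conditioning on $(A_j)_{j\ne k}$, there is a unit vector $\nu=\nu_k$ orthogonal to $H_k$, and $\dist(A_k,H_k)=|\ip{A_k}{\nu}|=\big|\sum_{j}\nu_j (A_k)_j\big|$ is a weighted sum of the independent entries of column $k$ with the fixed weight vector $\nu$; hence the conditional distance tail is at most $\Le\big(\sum_j \nu_j (A_k)_j,\,t\big)$. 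Everything now hinges on (i) an anti-concentration inequality for such weighted sums of independent, non-identically-distributed variables, and (ii) showing that $\nu_k$ has good arithmetic structure with probability $1-e^{-cn}$.

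The arithmetic parameter is the Randomized LCD. Since the entries of a column are not identically distributed, the usual least common denominator of $\nu$ does not control $\Le(\sum_j\nu_j(A_k)_j,\cdot)$; instead one works with $\RLCD(\nu)$, a variant of the LCD built with an auxiliary random selection of coordinates that reflects the individual L\'evy functions $\Le(A_{ij},\cdot)$, designed so that
\[
\Le\Big(\sum_{j}\nu_j(A_k)_j,\ t\Big)\ \le\ C\Big(t+\frac{1}{\RLCD(\nu)}\Big),\qquad t\ge 0 .
\]
The core of the proof is the estimate $\Prob\{\RLCD(\nu_k)\le e^{c n}\}\le e^{-cn}$. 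To prove it I would construct an $\e$-net of the incompressible sphere with \emph{lattice structure} --- net points being normalizations of integer vectors of controlled norm --- and show by a lattice-point count that only an exponentially small fraction of these net points have small RLCD. If $\RLCD(\nu_k)$ were small then $\nu_k$ would be close to a bad net point $x_0$; since $\nu_k\in\ker B_k$, where $B_k$ is the $(n-1)\times n$ matrix with rows $(A_j)_{j\ne k}$, this forces $|B_k x_0|$ to be small. For a \emph{fixed} $x_0$ with $\RLCD(x_0)\le e^{cn}$ the inequality above makes each $|\ip{A_j}{x_0}|$ anti-concentrated at scale comparable to $1/\RLCD(x_0)$, so $\Prob\{|B_k x_0|\le \text{(small)}\}\le (\text{small-ball})^{\,n-1}$; a union bound over the $e^{o(n)}$ bad net points --- where the lattice structure keeps the net small and $\|B_k\|=O(\sqrt n)$ controls the perturbation from $x_0$ to $\nu_k$ --- completes the argument. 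Summing the column bounds, adding the compressible estimate, and noting the claim is trivial once $C\e\ge 1$ yields the theorem in the stated form $C\e+2e^{-cn}$.

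I expect the real difficulty to be concentrated in the previous paragraph: choosing the definition of $\RLCD$ so that the non-i.i.d.\ Littlewood--Offord inequality genuinely holds with the correct scale, and simultaneously so that the lattice points of the efficient net can be shown --- by a counting argument robust to the inhomogeneity --- to have large RLCD with overwhelming probability. The remaining ingredients (compressible vectors, invertibility via distance, the Rebrova--Tikhomirov-style regularization of the spectral norm \cite{RebTikh}, and the bookkeeping of the constants $\delta,\rho$ and of the threshold $e^{cn}$) are adaptations of now-standard techniques, including those of \cite{Liv}.
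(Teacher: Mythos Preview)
Your overall architecture --- compressible/incompressible split, invertibility via distance, RLCD-based small ball, lattice nets, union bound over level sets of the RLCD --- matches the paper. But there is a genuine gap at the approximation step, and it is not a detail: you propose to ``dispose of the spectral norm'' by a Rebrova--Tikhomirov regularization, reducing to $\|A\|=O(\sqrt n)$, and then control $|B_k(x_0-\nu_k)|$ via $\|B_k\|\cdot|x_0-\nu_k|$. Under the present hypotheses this reduction is not available. The only moment information is the aggregate bound $\E\|A\|_{\HS}^2\le Kn^2$ together with the anti-concentration $\Le(A_{ij},1)\le b$, which imposes no tails whatsoever; individual entries may be arbitrarily heavy-tailed and have variances as large as $n^2$. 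The Rebrova--Tikhomirov row/column removal works because the entries are i.i.d.\ with unit variance, and there is no known analogue giving $\|A\|=O(\sqrt n)$ after deleting $o(n)$ rows/columns in this inhomogeneous heavy-tailed regime. The paper avoids the operator norm entirely: the net is produced by \emph{random rounding} (Lemma~\ref{keylemmarounding}, Theorem~\ref{determin}), which yields $|M(x-y)|\le \tfrac{\varepsilon}{\sqrt n}\sqrt{\B_\kappa(M)}$ with the column-weighted quantity $\B_\kappa(M)$ in place of $\|M\|$, and Lemma~\ref{ldpB} shows $\B_\kappa(M)\le 2\,\E\|M\|_{\HS}^2$ except on an event of probability $(\kappa/\sqrt 2)^{-2n}$. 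This is precisely the device, inherited from \cite{Liv}, that substitutes for operator-norm control, and it is also where the hypothesis $\E\|A\|_{\HS}^2\le Kn^2$ is actually used --- not only in a preliminary regularization.

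A second, smaller issue: you assert the distance bound $\Prob\{\dist(A_k,H_k)\le t\}\le Ct+2e^{-cn}$ for \emph{every} column $k$. In the paper this is Theorem~\ref{mainthm2}, and it is proved only for columns with $\E|A_k|^2\le r n^2$; a bounded number of columns may violate this. Correspondingly, the paper uses a modified invertibility-via-distance lemma (Lemma~\ref{invdist}) allowing one to drop any $\lfloor \delta n/2\rfloor$ columns from the sum, so the few bad columns are simply excluded. Finally, your description of the RLCD as ``built with an auxiliary random selection of coordinates'' is not the object that works: the paper's RLCD is $\inf\{\theta>0:\E\dist^2(\theta\, v\star\overline X,\Z^n)<\min(u\theta^2|v|^2,L^2)\}$, where the randomness is in the \emph{symmetrized entries} $\overline X_i$ and one takes the expectation inside. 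Its two crucial features are a stability lemma under $\ell_\infty$-perturbations (Lemma~\ref{stable-rlcd}), and a double-counting argument (Theorem~\ref{mainprop}) showing that all but a $U^{-n}$ fraction of lattice points have large RLCD --- these are exactly the ``real difficulty'' you anticipate, and the paper's definitions are tailored so that both go through.
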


We would like to emphasize that prior to this paper even the problem of {\it singularity} of inhomogeneous random
matrices was not resolved in the literature. In particular, it was not known if for an $n\times n$ random matrix $B$
with independent discrete entries
(say, uniformly bounded and with variances separated from zero), the singularity probability
is {\it exponentially small} in dimension. (Theorem 1 of \cite{Liv} only implied a polynomial bound on the singularity probability, without the assumption of i.i.d. rows.)

%Our proof of the theorem is partially based on the techniques developed 
%in \cite{RudVer-square}, \cite{Liv} and \cite{TikhErd}.
%At the same time, the existing Littlewood--Offord--type results were not 
%applicable in our setting. To this end, we have extended the Littlewood--Offord theory of linear combinations
%of random variables, by introducing a new notion of {\it the Randomized Least Common Denominator}
%and establishing several key properties (in particular, its relation to anti-concentration as well as stability under perturbations of a vector).
The following theorem is the primary tool in proving the main result of the paper.

\begin{theorem}[Distances] \label{mainthm2}
  For any $K>0$ and $b\in(0,1)$ there are $r, C, c>0$ depending only on $K$ and $b$ with the following property.
  Let $A$ be a random $n\times n$ matrix as in Theorem~\ref{mainthm1}.
  Denote the columns of $A$ by $A_1,\ldots, A_n$, and define 
  $$
  H_j=\Span \left\{ A_i:\,i\neq j, \; i=1,\dots,n \right\},\quad j\leq n.
  $$  
  Take any $j\leq n$ such that $\E|A_j|^2\leq r n^2$, and let $v_j$ be a random unit vector orthogonal to $H_j$
  and measurable with respect to the $sigma$--field generated by $H_j$. Then
  $$
  \Le \left(\langle v_j, A_j\rangle,\varepsilon \right)
  \leq C\varepsilon+2e^{-cn},\quad \varepsilon \ge 0.
  $$
  In particular, for every such $j$ we have
  $$
  \Pr{\dist(A_j,H_j)\leq \varepsilon}\leq C\varepsilon+2e^{-cn},\quad \varepsilon \ge 0.
  $$
\end{theorem}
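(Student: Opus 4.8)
My plan is to reduce the anti-concentration of $\langle v_j, A_j\rangle$ to a statement about the anti-concentration of weighted sums $\sum_i (v_j)_i A_{ij}$ for a ``typical'' choice of the normal direction $v_j$, and then to control the arithmetic structure of $v_j$ via the Randomized Least Common Denominator. Since $v_j$ is measurable with respect to $H_j = \Span\{A_i : i \ne j\}$, it is independent of the column $A_j$; so conditionally on $H_j$ we may fix $v_j = v$ and estimate $\Le\big(\sum_i v_i A_{ij}, \varepsilon\big)$ using a Littlewood--Offord--type bound. The standard such bound (via the Esseen inequality and the uniform anti-concentration $\Le(A_{ij},1)\le b$) shows that this concentration function is at most $C\varepsilon + C/\mathrm{RLCD}(v)$, provided $\mathrm{RLCD}(v)\ge \varepsilon^{-1}$; the core of the argument is therefore to prove that with probability at least $1 - 2e^{-cn}$ the random normal vector $v_j$ satisfies $\mathrm{RLCD}(v_j) \ge e^{cn}$.

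To establish that $v_j$ has exponentially large RLCD with overwhelming probability, I would argue by a union bound over a net of ``bad'' unit vectors --- those with small RLCD. The key point, advertised in the abstract, is that one can build an efficient $\varepsilon$-net on the sphere whose points lie on a scaled integer lattice, and that lattice points automatically have large RLCD; the set of unit vectors with $\mathrm{RLCD}\le D$ is then covered by not-too-many lattice balls, giving a net of cardinality roughly $D^{O(1)}(Cn)^{n/2}$ or so (the precise counting is the routine part). For each fixed $v$ in this bad net, the event that $A_j$ is almost orthogonal to $v$ --- equivalently that $|\langle v, A_j\rangle|$ is small --- has probability controlled by the same Esseen bound, which is of order $C\cdot\mathrm{RLCD}(v)\cdot\varepsilon \le$ (small) when $\mathrm{RLCD}(v)$ is not too large; but being a normal to $H_j$ forces $\langle v, A_i\rangle = 0$ for all $i \ne j$, and using the independence of the columns together with anti-concentration of each $\langle v, A_i\rangle$ one gets that the probability that \emph{some} lattice vector of small RLCD is simultaneously (nearly) orthogonal to all the columns $A_i$, $i\ne j$, is at most $(\text{net size}) \cdot (\text{small})^{\,n-1}$. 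Choosing the threshold $D$ appropriately (exponential in $n$) makes this product $\le e^{-cn}$. This is where the hypothesis $\E|A_j|^2 \le rn^2$ with $r$ small is used: it guarantees, via Markov's inequality, that the exceptional column $A_j$ has Euclidean norm $O(\sqrt n)$ off an exponentially small event, which is needed to pass from the discrete net back to arbitrary $v_j$ and to control the contribution of the coordinates along which $v_j$ is poorly spread (the ``incompressible'' versus ``compressible'' split of the sphere, handled as in \cite{RudVer-square,Liv}).

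Putting these together: off an event of probability $2e^{-cn}$ we have $\mathrm{RLCD}(v_j)\ge e^{cn}$, and on that event the conditional Esseen bound gives $\Le(\langle v_j, A_j\rangle, \varepsilon) \le C\varepsilon + e^{-cn}$ for all $\varepsilon\ge 0$; integrating out the conditioning yields the stated inequality $\Le(\langle v_j, A_j\rangle,\varepsilon) \le C\varepsilon + 2e^{-cn}$. The ``in particular'' clause is immediate: $\dist(A_j,H_j) = |\langle v_j, A_j\rangle|$ whenever $v_j$ is a unit normal to the hyperplane $H_j$ (on the event that $H_j$ is a genuine hyperplane, whose complement is absorbed into the $e^{-cn}$ term), so $\Pr{\dist(A_j,H_j)\le\varepsilon} = \Pr{|\langle v_j,A_j\rangle|\le\varepsilon} \le C\varepsilon + 2e^{-cn}$.

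The main obstacle, and the genuinely new ingredient, is the construction of the lattice net and the proof that its points have large RLCD while the net remains small enough for the union bound --- this is precisely what replaces the missing Littlewood--Offord theory for non-i.i.d.\ weighted sums. Everything downstream (the Esseen reduction, the compressible/incompressible dichotomy, the Markov bound on $|A_j|$) is standard and follows the template of \cite{RudVer-square} and \cite{Liv}.
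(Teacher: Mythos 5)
Your high-level outline follows the paper's own route: condition on $H_j$, use the Esseen/RLCD anti-concentration bound (Lemma~\ref{smallball-1}), and show that the random normal $v_j$ has exponentially large RLCD off an exponentially small event, via a net--union-bound argument over the incompressible sphere. But the central technical step is described in a way that would not work, and that is precisely where the genuine novelty of the paper lies.

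You write that ``lattice points automatically have large RLCD,'' and estimate the cardinality of the net of bad vectors as roughly $D^{O(1)}(Cn)^{n/2}$. Neither is correct. Lattice points do \emph{not} automatically have large RLCD; the paper proves (Theorem~\ref{mainprop}) the much more delicate statement that a lattice point drawn \emph{uniformly at random} from the set $\Lambda$ in \eqref{eq: Lambda} has $\RLCD^X \ge \min_i 1/\l_i$ with probability at least $1-U^{-n}$, and this requires the double-counting argument of Lemma~\ref{l: aux 03982} (applying Lemma~\ref{l: aux1} to each realization $x$ of $X$, then swapping the roles of $x$ and the lattice point by Fubini). This $U^{-n}$ gain is indispensable for the union bound: the full lattice net at resolution $\varepsilon\approx 1/D$ has cardinality $\approx(C/\varepsilon)^n \approx (CD)^n$ (Remark~\ref{rem: card of Lambda}), while the small-ball bound at a fixed vector of RLCD $\ge D$ is only $(C/D)^{n-1}$, so their product is $\approx C^n D$, not small. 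Only after pruning the net to its ``bad'' fraction of size $(\varepsilon U)^{-n}\approx (D/U)^n$ (Proposition~\ref{p: discrete complete}) does the union bound close at $\approx U^{-n} D$. Your cardinality estimate would not close the union bound at exponential $D$.

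Two further points. First, passing from an arbitrary incompressible unit vector $x$ of small RLCD to a nearby lattice point $y$ requires knowing that $y$ still has controlled RLCD; this is the stability Lemma~\ref{stable-rlcd}, a quantitative perturbation estimate for $\RLCD^X_{L,u}$ that your sketch omits entirely. It is also where the variance bound on the rows enters the approximation step (hypothesis \eqref{cond} / \eqref{eq: aux 498750983275}). Second, your explanation of the hypothesis $\E|A_j|^2\le r n^2$ (a Markov bound giving $|A_j|=O(\sqrt{n})$, plus some help with the compressible/incompressible split) is not the way it is actually used: its role is to make the RLCD machinery applicable to the column $A_j$ itself. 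Specifically, $\Var(A_j)\le rn^2$ feeds into Lemma~\ref{l: aux incomp rlcd} (via $T=\Var|X|$) and into the hypotheses of Propositions~\ref{pevelsets}--\ref{p: many cases}, yielding a deterministic lower bound on $\RLCD^{A_j}$ for incompressible vectors and allowing the dyadic level-set sweep to start at a finite level $D$; without the variance control, the symmetrized $\overline{A_j}$ in the definition of $\RLCD^{A_j}$ is too spread out and the anti-concentration estimate deteriorates. Finally, the paper's argument is further complicated by the need to split off columns of unusually large variance (the submatrix $W$ in Proposition~\ref{p: many cases}); this case analysis is needed precisely because the model has no i.i.d.\ assumption, and it is absent from your outline.
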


Let us %show
outline how Theorem~\ref{mainthm1} can be deduced from Theorem~\ref{mainthm2}. 
The first step follows the argument in \cite{RudVer-square}, which is to
decompose the sphere into compressible and incompressible vectors.
Fix some parameters $\rho,\delta\in (0,1)$, which for simplicity can be thought of as small constants. The set of compressible vectors $\Comp(\delta,\rho)$ consists of all vectors 
on the unit sphere $\sfe$ that are within Euclidean distance $\rho$ to $\delta n$-sparse vectors
(those that have at most $\delta n$ nonzero coordinates). 
The remaining unit vectors are called incompressible, 
and we have the decomposition of the sphere:
$$
\sfe = \Comp(\delta,\rho) \cup \Incomp(\delta,\rho).
$$
By the characterization \eqref{eq: s1 sn} of the smallest singular value, 
the invertibility problem reduces to finding a uniform lower bound 
over the sets of compressible and incompressible vectors:
\begin{equation}	\label{eq: comp incomp}
\Pr{ \sigma_n(A)\leq \frac{\varepsilon}{\sqrt{n}} } 
\leq \Pr{ \inf_{x\in \Comp(\delta,\rho)}|Ax| \leq \frac{\varepsilon}{\sqrt{n}} } 
  + \Pr{ \inf_{x\in \Incomp(\delta,\rho)}|Ax| \leq \frac{\varepsilon}{\sqrt{n}} }.
\end{equation}
For the compressible vectors, Lemma 5.3 from \cite{Liv} gives the upper bound $2e^{-cn}$
on the corresponding probability in \eqref{eq: comp incomp}. 
For the incompressible vectors, we use a version of the ``invertibility via distance'' bound
from \cite{RudVer-square}, which holds for any $n \times n$ random matrix $A$ (regardless of the distribution):
\begin{equation}	\label{eq: inv via dist}
\Pr{ \inf_{x\in \Incomp(\delta, \rho)} |Ax |\leq \frac{ \varepsilon \rho}{\sqrt{n}} }
\leq \frac{4}{\delta n} \inf_{J} \sum_{j\in J} \Pr{ \dist(A_j, H_j)\leq \varepsilon },
\end{equation}
where the infimum is over all subsets $J \subset[n]$ of cardinality at least $n-\delta n/2$.
To handle the distances, we apply Theorem~\ref{mainthm2}. 
Due to our assumption $\sum_{i,j=1}^n \E A_{ij}^2 = \sum_{j=1}^n \E |A_j|^2 \leq Kn^2$, 
all except at most $K/r$ terms satisfy $\E |A_j|^2 \le rn^2$.
Denoting the set of these terms by $J$ and applying Theorem~\ref{mainthm2}, we get
$$
\Pr{ \dist(A_j, H_j)\leq \varepsilon } \le C\varepsilon+2e^{-cn}
\quad \text{for all } j \in J.
$$
Since the cardinality of $J$ is at least $n-K/r \ge n-\delta n/2$ for large $n$, 
we can substitute this bound into \eqref{eq: inv via dist} and conclude that 
the last term in \eqref{eq: comp incomp} is bounded by
$\lesssim \e + e^{-cn}$ (recall that $\delta$ is a constant and we suppress it here).
Putting all together, the probability in \eqref{eq: comp incomp} gets bounded
by $\lesssim  \e + e^{-cn}$, as claimed in Theorem~\ref{mainthm1}.

\begin{remark}
Given Theorem~\ref{mainthm1}, the second assertion of Theorem~\ref{mainthm2} can be formally strengthened as follows.
Since the matrix $A$ is shown to be singular with probability at most $2e^{-cn}$,
we have that for any $j\leq n$ and any random unit vector $v_j$ orthogonal to $H_j$,
$|\langle v_j,A_j\rangle|=\dist(A_j,H_j)$ with probability at least $1-2e^{-cn}$.
Hence, the assertion of Theorem~\ref{mainthm2} can be replaced with
$$
\Le \left(\dist(A_j,H_j),\varepsilon \right)
\leq C\varepsilon+2e^{-cn},\;\; \varepsilon \ge 0,\quad\mbox{ whenever }\quad \E|A_j|^2\leq r n^2,
$$
for some $r,c,C>0$ depending only on $K,b$.
\end{remark}

\medskip

%A priori, for a fixed subspace $H:={\bf n}^{\perp}$ and a random vector $X,$ the distance $\\dist(X, H)=|\langle X,{\bf n}\rangle|$ may be quite concentrated around value $0$. For example,  function estimates: for instance, if $H=e_1^{\perp},$ then the concentration function bound of the coordinates of $X$ yields the estimate $\Le(\langle X,e_1\rangle,\varepsilon)\leq C\varepsilon$ for all $\varepsilon>C_1,$ with an absolute constant $C_1,$ however for smaller $\varepsilon$ this property may not hold. Another example: if ${\bf n}=(\frac{1}{\sqrt{n}},...\frac{1}{\sqrt{n}})$, then one may conclude that $\Le(\langle X,{\bf n}\rangle,\varepsilon)\leq C\varepsilon$ for all $\varepsilon>\frac{c}{\sqrt{n}},$ but not necessarily for smaller $\varepsilon.$ However, it is highly unlikely that the unit normal of a random subspace $H_i$ ends up being exactly $e_1$ or $\frac{1}{\sqrt{n}}\sum_{i=1}^n e_i$, and in fact, with high probability, we will show that such random normal falls into the set of vectors $u$ for which $\Le(\langle u, X_i\rangle,\varepsilon)\leq C\varepsilon$ for all $\varepsilon\geq 2e^{-cn}.$
An earlier version of Theorem~\ref{mainthm2}, under the assumption that the coordinates of $A_i$ are i.i.d., was obtained by Rudelson and Vershynin \cite{RudVer-square}. They discovered an arithmetic-combinatorial invariant of a vector (in this case, a normal vector of $H_i$), which they called an essential Least Common Denominator (LCD).
The authors of \cite{RudVer-square} proved a strong Littewood--Offord--type inequality
for linear combinations of i.i.d.\ random variables in terms of the LCD of the coefficient vector,
and thus were able to estimate $\Le \left( \dist(A_i, H_i),\varepsilon \right)$.
However, in the case when $A_i$ do not have i.i.d.\ coordinates, the essential LCD is no longer applicable.
Moreover, none of the existing Littlewood--Offord--type results
could be used even to show that the distance $\dist(A_i,H_i)$ is zero with an exponentially small probability
(which would allow to conclude that the singularity probability for the inhomogeneous random matrix is exponentially small in dimension).

In the present paper, we develop a {\em randomized} version of the least common denominator
and show how it can handle the non-i.i.d.\ coordinates.
Given a random vector $X$ in $\R^n$, 
and a (deterministic) vector $v$ in $\R^n$, as well as parameters $L>0$, $u \in (0,1)$, 
the Randomized Least Common Denominator of $v=(v_1,\dots,v_n)$ (with respect to the distribution of $X=(X_1,\dots,X_n)$) is
$$
\RLCD^X_{L,u}(v)=\inf \left\{ \theta>0:\,\E \dist^2(\theta (v_1 \bar X_1,\dots,v_n \bar X_n),\mathbb{Z}^n)<\min(u|\theta v|^2, L^2) \right\},
$$
where $\bar X_i$ denotes a symmetrization of $X_i$ defined as $\bar X_i:=X_i-X_i'$, with $X_i'$ being an independent copy of $X_i$, $i=1,2,\dots,n$
(for the sake of comparison, let us recall that the essential Least Common Denominator for random vectors with i.i.d.\ components
was defined in \cite{RudVer-general} as ${\rm LCD}(v):=\inf
\{ \theta>0:\, \dist(\theta (v_1 ,\dots,v_n ),\mathbb{Z}^n)<\min(u|\theta v|, L) \}$).
In this paper, we establish a few key properties of the RLCD,
in particular, its relation to anti-concentration as well as stability under perturbations of a vector.
Other essential elements of the proof of Theorem~\ref{mainthm2} are 
a discretization argument based on the concept of random rounding and
a double counting procedure for
estimating cardinalities of $\varepsilon$--nets.
Those were, in a rather different form, used in \cite{Liv} and \cite{TikhErd}.

In Section~\ref{s: prelims} we discuss some preliminaries and introduce our main tool, the RLCD. 
In Section~\ref{s: discretization} we outline the discretization procedure, based on the idea of random rounding.
In Section~\ref{s: double counting} we outline the key result, which informally states that ``lattice vectors are usually nice'', and is based on the idea of double counting. 
In Section~\ref{s: proof dist} we combine the results of Sections~\ref{s: discretization} and~\ref{s: double counting}, and prove Theorem~\ref{mainthm2}. 
In Section~\ref{s: proof main} we conclude by formally deriving Theorem \ref{mainthm1} from Theorem \ref{mainthm2}. 

\begin{remark}
The main results of this paper are stated here for real random matrices, and can be extended to random matrices with complex entries. This was recently done in the preprint \cite{Jain-Silwal} following the approach we presented in the present paper.
\end{remark}

\subsection*{Acknowledgement} The first author is grateful to the mathematics department of UC Irvine for hospitality. The first two authors are grateful to Mark~Rudelson for suggesting this problem. All authors thank the referees for their many useful comments.

\section{Preliminaries}		\label{s: prelims}

The inner product in $\R^n$ is denoted $\langle \cdot,\cdot\rangle$, the Euclidean norm is denoted $|\cdot|$, and the sup-norm is denoted $\|x\|_{\infty}=\max_i |x_i|$. The Euclidean unit ball and sphere in $\R^n$ are denoted $B_2^n$ and $\sfe$, respectively. The unit cube and the cross-polytope in $\R^n$ are denoted 
$$
B_{\infty}^n = \big\{ x\in\R^n:\,\|x\|_{\infty}\leq 1 \big\},
\quad
B_1^n = \big\{x\in\R^n:\,\sum_{i=1}^n |x_i|\leq 1 \big\}.
$$
The integer part of a real number $a$ (i.e., the largest integer which is smaller or equal to $a$) is denoted by $\lfloor a\rfloor$,
and the fractional part by $\{a\}=a-\lfloor a\rfloor$.
The cardinality of a finite set $I$ is denoted by $\sharp I$.

Columns of an $N\times n$ matrix $M$ will be denoted by $M_j,$ for $j=1,\dots,n,$ and the rows will be denoted $M^i,$ with $i=1,\dots,N.$

For a random variable $X$, we denote by $\overline{X}$ the symmetrization of $X$
defined as $\overline{X}=X-X'$, where $X'$ is an independent copy of $X$.
Note that 
\begin{equation}	\label{eq: symmetrization var}
\E |\overline{X}|^2 = 2\Var(X),
\end{equation}
where we defined the variance of a random vector $X$ as the covariance 
of $X$ with itself, i.e. $\Var(X) = \Cov(X,X) = \E |X - \E X|^2$.

\subsection{Decomposition of the sphere}

We shall follow the scheme developed by Rudelson and Vershynin in \cite{RudVer-square}, 
the first step of which is to decompose the sphere to the set of compressible and incompressible vectors. Such decomposition in some form goes back to earlier works, in particular that of Litvak, Pajor, Rudelson and Tomczak-Jaegermann \cite{LPRT}, and it was used in many papers since then %by Rudelson, Vershynin, Tatarko, Tikhomirov, Rebrova et al
\cite{RudVer-general, tatarko, Tikh, RebTikh}.

Fix some parameters $\delta,\rho\in(0,1)$ whose values will be chosen later, and define 
the sets of sparse, compressible, and incompressible vectors as follows:
\begin{gather*}
\Sparse(\delta) := \left\{ u\in\sfe: \#\supp(u) \le \delta n \right\},\\
\Comp(\delta,\rho) := \left\{ u\in\sfe:\,\dist(u,\Sparse(\delta))\leq \rho \right\}, \\
\Incomp(\delta,\rho) := \sfe\setminus \Comp(\delta,\rho).
\end{gather*}
We will use a result of \cite{Liv}, 
which gives a good uniform lower bound for $|Ax|$ 
on the set of compressible vectors:

\begin{lemma}[Lemma 5.3, \cite{Liv}]\label{comp-final}
  Let $A$ be an $N\times n$ random matrix with $N \ge n$, 
  whose entries $A_{ij}$ are independent and satisfy 
  $\sum_{i=1}^N \sum_{j=1}^n \E A_{ij}^2 \leq KNn$ for some $K>0$ and 
  $\max_{i,j} \Le(A_{ij},1)\leq b$ for some $b\in (0,1)$. 
  Then 
  $$
  \Pr{ \inf_{x\in \Comp(\delta,\rho)} |Ax|\leq c\sqrt{N} }
  \leq 2e^{-cN}.
  $$
  Here $\rho, \delta\in(0,1)$ and $c>0$ depend only on $K$ and $b$.
\end{lemma}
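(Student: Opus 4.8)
The plan is to carry out the compressible/incompressible decomposition in the form of \cite{LPRT, RudVer-square} --- Lemma~\ref{comp-final} being the compressible half --- with two modifications forced by the absence of moment and distributional hypotheses.

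\emph{The single-row small-ball bound.} The engine of the sparse estimate is the claim that for a \emph{fixed} unit vector $x$ one has $\Prob\{|Ax|\le c_1\sqrt N\}\le e^{-c_2N}$, with $c_1,c_2$ depending only on $b$. Since $|Ax|^2=\sum_{i=1}^N\langle A^i,x\rangle^2\ge\sum_{i=1}^N\min\!\big(\langle A^i,x\rangle^2,M^2\big)$ and the summands are independent and bounded by $M^2$, a Chernoff bound reduces this to a lower bound $\E\min\!\big(\langle A^i,x\rangle^2,M^2\big)\ge\eta$, with $M,\eta$ depending only on $b$, uniform over unit $x$. I would prove the latter by a dichotomy on $\|x\|_\infty$: if some $|x_{j^\ast}|\ge t_0$, then conditioning on the remaining entries of the $i$-th row and using $\Le(A_{ij^\ast},1)\le b$ gives $\Prob\{|\langle A^i,x\rangle|\ge t_0\}\ge 1-b$; if $\|x\|_\infty<t_0$, a dyadic grouping of the coordinates by size combined with a Kolmogorov--Rogozin inequality and the pointwise bound $\Var(A_{ij})\ge 1-b$ (which follows from $\Le(A_{ij},1)\le b$ and Chebyshev) yields anti-concentration of $\langle A^i,x\rangle$ at a constant scale. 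The truncation at $M^2$ is precisely what renders the unbounded second moments harmless; this step is also where the lack of \emph{identical} distribution costs nothing, since it uses only the individual L\'evy estimates.

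\emph{Passing to the sphere, and the main obstacle.} It remains to cover $\Sparse(\delta)$, and then $\Comp(\delta,\rho)$, by a net of subexponential cardinality and take a union bound. Here is the difficulty specific to \emph{inhomogeneous} matrices: the hypothesis $\sum_{i,j}\E A_{ij}^2\le KNn$ allows, for instance, one column with entries $\pm\sqrt n$ (admissible, since such an entry has $\Le(\cdot,1)=\tfrac12<1$), forcing $\|A\|\gtrsim\sqrt{Nn}$; even after discarding the few ``heavy'' rows and columns, the operator norm of the relevant sub-blocks can be of order $\sqrt{Nn}$ rather than $\sqrt N$, so a naive $\nu$-net controlling the discretization error via $\nu\|A\|$ would require spacing $\nu\sim 1/\sqrt n$ and hence cardinality $e^{\Omega(n\log n)}$, overwhelming the $e^{-c_2N}$ gained above. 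The remedy, following \cite{Liv} (in the spirit of \cite{TikhErd}), is to replace the covering net by a \emph{random rounding} discretization: each coordinate is rounded to a nearby point of a lattice of spacing $\sim 1/\sqrt n$ with probabilities tuned so that the rounded vector stays in the same class (sparse, resp.\ compressible) and the rounding error is controlled coordinate-by-coordinate rather than through $\|A\|$, producing a net of cardinality only $e^{O(n)}$. One also performs a regularization --- off an event of exponentially small probability, only $o(N)$ rows and $o(n)$ columns of $A$ have norm exceeding constant multiples of $\sqrt n$ and $\sqrt N$, since these counts are sums of independent indicators with total mean $O(KN)$ by Markov --- so that heavy tails do not spoil the rounding step and the heavy columns, being few, can be absorbed into the sparse support. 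Choosing $\delta,\rho$ small (depending on $K,b$) makes all error terms $\le c\sqrt N$, and a union bound over the $e^{O(n)}$ net points completes the argument.

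I expect essentially all of the effort to lie in this last, inhomogeneity-driven step --- since no bound of the form $\|A\|\lesssim\sqrt N$ is available, the classical covering net must be replaced by the random-rounding construction --- which is exactly why Lemma~\ref{comp-final} is imported verbatim from \cite{Liv}, where that machinery is developed.
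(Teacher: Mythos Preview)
The paper does not prove this lemma at all; it is quoted verbatim from \cite{Liv} (as both the lemma header and you yourself note in your final paragraph), so there is nothing to compare against on the paper's side.

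Your outline is a faithful reconstruction of the argument in \cite{Liv}: a uniform single-vector small-ball bound via truncation and row-wise anti-concentration, followed by a net argument in which --- because no operator-norm bound $\|A\|\lesssim\sqrt N$ is available under these hypotheses --- the classical $\nu$-covering is replaced by the random-rounding/$\B_\kappa$ discretization, together with a regularization step that isolates the few heavy rows and columns. That is indeed the content of \cite[Section~5]{Liv}, and your identification of \emph{why} the standard net fails (a single admissible column can force $\|A\|\asymp\sqrt{Nn}$) and what replaces it is correct. Two minor points: the bound $\Var(A_{ij})\ge 1-b$ follows directly from $\Prob\{|A_{ij}-\E A_{ij}|\ge 1\}\ge 1-b$, not Chebyshev; and your ``total mean $O(KN)$'' for the heavy-column indicators should read $O(Kn/\alpha)$ --- but neither affects the argument.
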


The rest of our argument will be about incompressible vectors. 

\subsection{Randomized Least Common Denominator}

We will need the following lemma due to Esseen (see Esseen \cite{Ess}, or, e.g., Rudelson--Vershynin \cite{RudVer-square}):

\begin{lemma}[Esseen]\label{essen}
Given a variable $\xi$ with the characteristic function $\varphi(\cdot)=\E\exp(2\pi {\bf i}\xi\cdot)$,
$$\Le(\xi,t)\leq C\int_{-1}^{1} \bigg|\varphi\left(\frac{s}{t}\right)\bigg|\,ds,\quad t>0,$$
where $C>0$ is an absolute constant.
\end{lemma}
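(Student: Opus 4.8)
The plan is to recall Esseen's classical smoothing argument. First I would reduce to the case $t=1$: setting $\eta:=\xi/t$ we have $\Le(\xi,t)=\Le(\eta,1)$, while the characteristic function of $\eta$ is $s\mapsto\varphi(s/t)$; hence it suffices to prove $\Le(\xi,1)\le C\int_{-1}^{1}|\varphi(s)|\,ds$ for an absolute constant $C$, after which one substitutes $\varphi(\cdot)\mapsto\varphi(\cdot/t)$ and changes variables in the integral to recover the general statement.

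The heart of the matter is to dominate the indicator of $[-1,1]$ by a fixed nonnegative function whose Fourier transform is supported in $[-1,1]$. I would take the dilated $\mathrm{sinc}^2$ kernel
$$g(x)=\left(\frac{\sin(\pi x/2)}{\pi x/2}\right)^2,$$
which is nonnegative, integrable, bounded below by $c_0:=(2/\pi)^2$ on $[-1,1]$ (because $y\mapsto\sin(\pi y)/(\pi y)$ is decreasing on $[0,1]$), and whose Fourier transform $\widehat g(s)=2(1-2|s|)_+$ is continuous, bounded by $2$, and supported in $[-1/2,1/2]\subset[-1,1]$. Then for every $u\in\R$, using $\1_{[-1,1]}(x)\le c_0^{-1}g(x)$ for all $x$, Fourier inversion for $g$, and Fubini's theorem (legitimate since $\widehat g$ is bounded with compact support and $|\varphi|\le1$),
$$\Prob\{|\xi-u|<1\}\le\frac{1}{c_0}\,\E\, g(\xi-u)=\frac{1}{c_0}\left|\int_{-1/2}^{1/2}\widehat g(s)\,e^{-2\pi{\bf i}us}\,\varphi(s)\,ds\right|\le\frac{2}{c_0}\int_{-1}^{1}|\varphi(s)|\,ds.$$
Taking the supremum over $u\in\R$ yields $\Le(\xi,1)\le(\pi^2/2)\int_{-1}^{1}|\varphi(s)|\,ds$, which combined with the rescaling step proves the lemma.

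There is no genuine difficulty here; the only points needing a moment's care are the choice of the dilation of the $\mathrm{sinc}^2$ kernel — any dilation factor strictly below $1$ keeps $g$ bounded away from zero on $[-1,1]$ while keeping $\supp\widehat g\subset[-1,1]$ — and the interchange of expectation and integral, which is immediate from the compact support of $\widehat g$. One could equally well use any even Schwartz function that is $\ge 1$ on $[-1,1]$ and has Fourier transform supported in $[-1,1]$ (for instance a suitably dilated square of a smooth bump), at the expense of a worse value of the absolute constant $C$, which is irrelevant for our applications.
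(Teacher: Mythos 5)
Your proof is correct and is the classical smoothing argument for Esseen's concentration inequality: rescale to $t=1$, dominate the indicator of $[-1,1]$ by a scaled Fej\'er kernel whose Fourier transform is a triangle supported in $[-1/2,1/2]$, apply Fourier inversion and Fubini, and take absolute values. The paper itself does not prove this lemma but simply cites it (to Esseen and to Rudelson--Vershynin), so there is no in-paper proof to compare against; your write-up faithfully reconstructs the standard argument, and your constant $C=\pi^2/2$ and the justification of Fubini via the boundedness and compact support of $\widehat g$ are both accurate.
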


Rudelson and Vershynin \cite{RudVer-square, RudVer-general} specialized Esseen's lemma for 
weighted sums of independent random variables 
$\ip{X}{v} = \sum_{i=1}^n v_i X_i$:

\begin{lemma}\label{concfunc}
Let $X=(X_1,\dots,X_n)$ be a random vector with independent coordinates. %such that $\max_i \Le(X_i,1)\leq b$ for some $b \in (0,1)$. 
Then for every vector $v\in\R^n,$ and any $t>0$, we have\footnote{Recall that $\overline{X_i}$ denotes the symmetrization of $X_i$, which we defined in the beginning of Section~\ref{s: prelims}.}
$$\Le\left(\langle X, v\rangle,t\right)
\leq C_{\text{\tiny\ref{concfunc}}}\int_{-1}^1 \exp\bigg(-c_{\text{\tiny\ref{concfunc}}}\E\Big(\sum_{i=1}^n \Big[1-\cos\Big(\frac{2\pi s\overline{X}_i v_i}{t}\Big)\Big]
\,\Big)\bigg)ds.$$
The constants $C_{\text{\tiny\ref{concfunc}}},c_{\text{\tiny\ref{concfunc}}}>0$ are absolute. %here depend only on $b.$
\end{lemma}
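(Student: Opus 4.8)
The plan is to combine Esseen's inequality (Lemma~\ref{essen}) with the independence of the coordinates and a symmetrization trick, following \cite{RudVer-square, RudVer-general}. First I would apply Lemma~\ref{essen} to $\xi=\langle X,v\rangle$, whose characteristic function $\varphi(\theta)=\E\exp(2\pi{\bf i}\theta\langle X,v\rangle)$ factors by independence as $\varphi(\theta)=\prod_{i=1}^n\varphi_i(v_i\theta)$, where $\varphi_i(\theta)=\E\exp(2\pi{\bf i}\theta X_i)$. Esseen's bound then gives
$$\Le(\langle X,v\rangle,t)\leq C\int_{-1}^1\Bigl|\prod_{i=1}^n\varphi_i\bigl(\tfrac{sv_i}{t}\bigr)\Bigr|\,ds.$$

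Next I would pass to squared moduli to bring in the symmetrizations $\overline{X}_i=X_i-X_i'$. For each $i$,
$$\bigl|\varphi_i(v_i\theta)\bigr|^2=\varphi_i(v_i\theta)\overline{\varphi_i(v_i\theta)}=\E\exp\bigl(2\pi{\bf i}v_i\theta\overline{X}_i\bigr)=\E\cos\bigl(2\pi v_i\theta\overline{X}_i\bigr),$$
the last equality holding because the imaginary part vanishes by the symmetry of $\overline{X}_i$; note this quantity lies in $[0,1]$. Applying the elementary inequality $a\leq e^{a-1}$ (valid for all real $a$) and using Tonelli's theorem to pull the expectation out of the nonnegative quantity $1-\cos$, I get
$$\bigl|\varphi_i(v_i\theta)\bigr|^2\leq\exp\bigl(-\E[1-\cos(2\pi v_i\theta\overline{X}_i)]\bigr).$$
Taking the product over $i$ and using independence again,
$$\Bigl|\prod_{i=1}^n\varphi_i(v_i\theta)\Bigr|^2\leq\exp\Bigl(-\E\sum_{i=1}^n[1-\cos(2\pi v_i\theta\overline{X}_i)]\Bigr),$$
so that $\bigl|\prod_{i=1}^n\varphi_i(v_i\theta)\bigr|\leq\exp\bigl(-\tfrac12\E\sum_{i=1}^n[1-\cos(2\pi v_i\theta\overline{X}_i)]\bigr)$.

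Finally I would substitute $\theta=s/t$ into the integral from Esseen's lemma, which yields exactly the claimed estimate with $C_{\text{\tiny\ref{concfunc}}}$ equal to the absolute constant $C$ of Lemma~\ref{essen} and $c_{\text{\tiny\ref{concfunc}}}=\tfrac12$. I do not expect any genuine obstacle here: the argument is a direct computation, and the only points needing a little care are the identification of $|\varphi_i(v_i\theta)|^2$ with the characteristic function of the symmetrized variable $\overline{X}_i$ and the (routine) use of Tonelli's theorem to interchange the expectation with the finite sum.
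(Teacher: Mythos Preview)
Your proof is correct and essentially identical to the paper's: both apply Esseen's lemma, factor the characteristic function by independence, identify $|\varphi_i|^2$ with $\E\cos(2\pi v_i\theta\overline{X}_i)$, and then use the elementary bound $b\le e^{b-1}$ (the paper phrases it as $|a|\le\exp(-\tfrac12(1-a^2))$, which is the same inequality with $b=a^2$). The only cosmetic difference is that you apply the inequality to $|\varphi_i|^2$ and then take a square root, whereas the paper states it directly for $|\varphi_i|$; the resulting constant $c_{\text{\tiny\ref{concfunc}}}=\tfrac12$ is the same.
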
 

For completeness, we outline the argument here. 

\begin{proof}
Let $\varphi$ be the characteristic function of $\langle X, v\rangle$, and 
$\varphi_i$ be the characteristic function of $X_i$. By independence, we have
$$\varphi(s)=\prod_{i=1}^n \varphi_i(sv_i),\quad s\in\R.$$
By definition of $\overline{X}_i$, we have for each $i\leq n$:
$$
|\varphi_i(sv_i)|=\sqrt{\E \cos(2\pi sv_i\overline{X}_i)}
\leq \exp \Big(-\frac{1}{2}\left(1-\E\cos( 2\pi sv_i \overline{X}_i)\right) \Big),\quad s\in\R,
$$
where the last step uses the inequality $|a|\leq \exp \big( -\frac{1}{2}(1-a^2) \big)$ valid for all $a\in\R$. 
%Lastly, we use the assumption on the concentration function bound of $X_i$ and deduce that
%$$1-\E\cos(2\pi sw_i \overline{X}_i)\geq c\left(1-\E\left(\cos(2\pi sw_i \overline{X}_i)\,|\,|\overline{X}_i|\geq 1\right)\right)$$
%for some $c>0$ which may only depend on $b$.
To finish the proof it remains to use Lemma \ref{essen}.
\end{proof}

In analogy with the notion of the essential least common denominator (LCD) developed by Rudelson and Vershynin \cite{RudVer-square, RudVer-general, RudVer-delocalization}, we define a 
randomized version of LCD, which will be instrumental in controlling the sums non-identically 
distributed random variables.

\begin{definition}
  For a random vector $X$ in $\R^n$, 
  a (deterministic) vector $v$ in $\R^n$, and parameters $L>0$, $u \in (0,1)$, 
  define 
  $$
  \RLCD^X_{L,u}(v) 
  :=\inf \left\{ \theta>0:\,\E \dist^2(\theta v\star \overline{X},\mathbb{Z}^n)
    <\min(u|\theta v|^2, L^2) \right\}.
  $$
Here by $\star$ we denote the Schur product
$$v\star X:=(v_1 X_1,\dots,v_nX_n).$$
\end{definition}

The usefulness of RLCD is demonstrated in the following lemma, which shows how 
RLCD controls the concentration function of a sum of independent random variables.

\begin{lemma}\label{smallball-1}
  Let $X=(X_1,\dots,X_n)$ be a random vector with independent coordinates.  
  %satisfying $\max_i \Le(X_i,1)\leq b$ for some $b \in (0,1)$. 
  Let $c_0 > 0$, $L>0$ and $u \in (0,1)$.
  Then for any vector $v\in\R^n$ with $|v|\geq c_0$
  and any $\varepsilon \ge 0$, we have
  $$
  \Le(\langle X,v\rangle, \varepsilon)
  \leq C\varepsilon
  +C\exp(-\widetilde c L^2)
  + \frac{C}{\RLCD^X_{L,u}(v)}.
  $$
  Here $C>0,\widetilde c>0$ may only depend on $c_0,u$. %$b,c_0,u$. 
%  and $\widetilde C,\widetilde c>0$ may only depend on $b,u$.
\end{lemma}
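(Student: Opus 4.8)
The plan is to start from the Esseen-type bound of Lemma~\ref{concfunc}, which gives
$$
\Le(\langle X,v\rangle,\varepsilon)
\leq C_{\text{\tiny\ref{concfunc}}}\int_{-1}^1 \exp\Big(-c_{\text{\tiny\ref{concfunc}}}\,F(s)\Big)\,ds,
\qquad
F(s):=\E\sum_{i=1}^n \Big[1-\cos\Big(\tfrac{2\pi s\overline{X}_i v_i}{\varepsilon}\Big)\Big],
$$
and to lower bound $F(s)$ for most $s\in[-1,1]$ in terms of the RLCD of $v$. First I would use the elementary inequality $1-\cos(2\pi x)\geq c\,\dist^2(x,\mathbb{Z})$ (valid with an absolute constant $c>0$), applied coordinatewise with $x = s\overline{X}_i v_i/\varepsilon$, to obtain
$$
F(s)\;\geq\; c\,\E\,\dist^2\Big(\tfrac{s}{\varepsilon}\,v\star\overline{X},\,\mathbb{Z}^n\Big).
$$
Now write $\theta=s/\varepsilon$. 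By the very definition of $\theta_0:=\RLCD^X_{L,u}(v)$, for every $0<\theta<\theta_0$ we have $\E\,\dist^2(\theta v\star\overline{X},\mathbb{Z}^n)\geq \min(u|\theta v|^2,L^2)$. Hence, as long as $|s|/\varepsilon<\theta_0$, that is $|s|<\varepsilon\theta_0$, we get $F(s)\geq c\min(u\theta^2|v|^2, L^2)\geq c\min(u c_0^2 s^2/\varepsilon^2,\,L^2)$, using $|v|\geq c_0$.

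Next I would split the integral $\int_{-1}^1$ into the ``good'' range $|s|<\min(1,\varepsilon\theta_0)$ and the complementary ``bad'' range. On the bad range the integrand is at most $1$, so it contributes at most $2\max(0,1-\varepsilon\theta_0)\leq 2/( \varepsilon\theta_0)\cdot(\varepsilon\theta_0)$... more usefully, it contributes at most $\min(2, 2/(\varepsilon\theta_0))\cdot$something — concretely, if $\varepsilon\theta_0\geq 1$ the bad range is empty, and otherwise the bad range has length $2(1-\varepsilon\theta_0)\leq 2$ but we will absorb it: note the bad range being nonempty forces $\varepsilon\theta_0<1$, i.e. $\varepsilon<1/\theta_0$, so in that case the whole left side is trivially $\leq C/\theta_0$ after we also control the good part. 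On the good range $|s|<\min(1,\varepsilon\theta_0)$ I would further split at the threshold $|s|=\varepsilon L/(\sqrt{u}\,c_0)$ where the two terms in the $\min$ cross: for $|s|$ above that threshold the integrand is $\leq \exp(-c' L^2)$, contributing $\leq 2\exp(-c'L^2)$; for $|s|$ below it the integrand is $\leq \exp(-c'' s^2/\varepsilon^2)$, and $\int_{\mathbb{R}}\exp(-c'' s^2/\varepsilon^2)\,ds = C'''\varepsilon$. Collecting the three contributions gives $\Le(\langle X,v\rangle,\varepsilon)\leq C\varepsilon + C\exp(-\widetilde cL^2) + \frac{C}{\theta_0}$, where the last term is only needed to cover the regime $\varepsilon\theta_0<1$ (in which $C\varepsilon$ alone would not suffice and we invoke $1\leq 1/(\varepsilon\theta_0)$ is false — rather, we simply note that when $\varepsilon\theta_0\ge 1$ the RLCD term is dominated by $C\varepsilon\geq C/\theta_0$... ), with all constants depending only on $c_0$ and $u$ as required.

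The one genuinely delicate bookkeeping point — the main obstacle — is the clean handling of the ``$C/\RLCD$'' term: one must verify that in the regime where $\varepsilon$ is not small compared to $1/\theta_0$ the bound $C\varepsilon$ already dominates $C/\theta_0$, while in the regime $\varepsilon\theta_0<1$ the $1/\theta_0$ term correctly absorbs the contribution of the range $|s|\geq\varepsilon\theta_0$ where we have no lower bound on $F(s)$ at all. Everything else — the coordinatewise $1-\cos\geq c\,\dist^2$ inequality, the Gaussian integral $\int\exp(-c''s^2/\varepsilon^2)ds\lesssim\varepsilon$, and the crossover between the $u|\theta v|^2$ and $L^2$ branches of the minimum — is routine. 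I would also remark that the hypothesis $c_0>0$ enters only through $|v|\geq c_0$, used to convert $|\theta v|^2\geq c_0^2\theta^2$; without it the Gaussian term would degrade.
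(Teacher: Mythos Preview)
Your approach is the same as the paper's: apply Lemma~\ref{concfunc}, use $1-\cos(2\pi x)\geq c\,\dist^2(x,\Z)$ coordinatewise, invoke the definition of $\theta_0:=\RLCD^X_{L,u}(v)$ to get $\E\dist^2(\theta v\star\overline X,\Z^n)\geq\min(u|\theta v|^2,L^2)$ for $|\theta|<\theta_0$, and then compute the resulting Gaussian-type integral. The role of $|v|\geq c_0$ is exactly as you say.

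The one place where your write-up has a real gap is precisely the point you flag as delicate: the regime $\varepsilon\theta_0<1$. Your claim that ``the $1/\theta_0$ term correctly absorbs the contribution of the range $|s|\geq\varepsilon\theta_0$'' is not correct as stated. On that bad range you only know the integrand is $\leq 1$, so its contribution is at most $2(1-\varepsilon\theta_0)$, which is of order $1$, not of order $1/\theta_0$. Your earlier sentence that ``the whole left side is trivially $\leq C/\theta_0$'' is circular unless you supply an additional argument; nothing so far bounds $\Le(\langle X,v\rangle,\varepsilon)$ when $\varepsilon<1/\theta_0$.

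The paper's fix is clean and you should adopt it: simply assume from the outset that $\varepsilon\geq 1/\theta_0$, using the monotonicity $\Le(\langle X,v\rangle,\varepsilon)\leq\Le(\langle X,v\rangle,1/\theta_0)$ for smaller $\varepsilon$. Under this assumption, after the substitution the integration variable ranges over $[-1/\varepsilon,1/\varepsilon]\subset[-\theta_0,\theta_0]$, so the RLCD lower bound applies on the \emph{entire} range of integration and there is no ``bad'' set at all. One then gets $\Le(\langle X,v\rangle,\varepsilon)\leq C\varepsilon+C\exp(-\widetilde c L^2)$ for all $\varepsilon\geq 1/\theta_0$; evaluating at $\varepsilon=1/\theta_0$ and invoking monotonicity covers the remaining range and produces the $C/\theta_0$ term.
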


\begin{proof}
Take any $\varepsilon\geq 1/\RLCD^X_{L,u}(v)$.
By Lemma~\ref{concfunc}, we have
$$\Le\left(\langle X, v\rangle,\varepsilon\right)
\leq C_{\text{\tiny\ref{concfunc}}}\int_{-1}^1 \exp\bigg(-c_{\text{\tiny\ref{concfunc}}}\E\Big(\sum_{i=1}^n \Big[1-\cos\Big(\frac{2\pi s\overline{X}_i v_i}{\varepsilon}\Big)\Big]
\,\Big)\bigg)ds.$$
For each $s\in[-1,1]$ and $i\leq n$ we have
$$
\E\Big[1-\cos\Big(\frac{2\pi s\overline{X}_i v_i}{\varepsilon}\Big)\Big]
\geq \widetilde c\,\E \,\dist^2(s\overline{X}_i v_i/\varepsilon,\Z)
$$
for some universal constant $\widetilde c>0$. Hence,
\begin{align*}
\Le\left(\langle X, v\rangle,\varepsilon\right)
&\leq C_{\text{\tiny\ref{concfunc}}}\int_{-1}^1 \exp\bigg(-c_{\text{\tiny\ref{concfunc}}}\widetilde c\,
\E \,\dist^2(s\overline{X} \star v/\varepsilon,\Z^n)
\,\bigg)ds\\
&=
C_{\text{\tiny\ref{concfunc}}}\varepsilon\int_{-1/\varepsilon}^{1/\varepsilon} \exp\bigg(-c_{\text{\tiny\ref{concfunc}}}\widetilde c\,
\E \,\dist^2(s\overline{X} \star v,\Z^n)
\,\bigg)ds\\
&\leq
C_{\text{\tiny\ref{concfunc}}}\varepsilon\int_{-1/\varepsilon}^{1/\varepsilon} \exp\bigg(-c_{\text{\tiny\ref{concfunc}}}\widetilde c\,
\,\min(u|s v|^2,L^2)
\,\bigg)ds,
\end{align*}
where at the last step we used the definition of RLCD and the assumption on $\varepsilon$.
A simple computation finishes the proof.
\end{proof}

We shall also need the notion of the randomized LCD for matrices.

\begin{definition}		\label{def: RLCD matrix}
For an $m\times n$ matrix $M$ with rows $M^{1},\dots, M^m$, and a vector $v\in\R^n$, define
$$
\RLCD^M_{L,u}(v):=\min_{i=1,\dots,m} \RLCD_{L,u}^{M^i}(v).$$
\end{definition}

Recall the following ``tensorization'' lemma of Rudelson and Vershynin \cite{RudVer-square}:

\begin{lemma}[Tensorization lemma, Rudelson--Vershynin \cite{RudVer-square}]\label{tensorization}
Suppose that $\varepsilon_0\in(0,1)$, $K\geq 1$, and let $Y_1,\dots,Y_m$ be independent random variables such that each $Y_i$ satisfies 
$$\Prob\{|Y_i|\leq \varepsilon\}\leq K\varepsilon\quad\mbox{for all }\varepsilon\geq\varepsilon_0.$$
Then
$$\Prob\Big\{\sum_{i=1}^m Y_i^2\leq \varepsilon^2 m \Big\}\leq (CK\varepsilon)^m,\quad \varepsilon\geq\varepsilon_0,$$
where $C>0$ is a universal constant.
\end{lemma}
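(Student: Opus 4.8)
The plan is to apply the exponential Markov inequality to the lower tail of $\sum_i Y_i^2$, factorize by independence, and reduce everything to a one-dimensional moment estimate.

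Set $\zeta_i:=|Y_i|\ge 0$, so that $\sum_{i=1}^m Y_i^2=\sum_{i=1}^m\zeta_i^2$ and the hypothesis reads $\Prob\{\zeta_i\le s\}\le Ks$ for all $s\ge\varepsilon_0$. On the event $\{\sum_i\zeta_i^2\le\varepsilon^2 m\}$ we have $\exp\big(m-\varepsilon^{-2}\sum_i\zeta_i^2\big)\ge 1$, so taking expectations and using independence,
$$
\Prob\Big\{\sum_{i=1}^m\zeta_i^2\le\varepsilon^2 m\Big\}
\le e^m\,\E\exp\Big(-\varepsilon^{-2}\sum_{i=1}^m\zeta_i^2\Big)
= e^m\prod_{i=1}^m\E\exp(-\zeta_i^2/\varepsilon^2).
$$
It therefore suffices to prove the one-dimensional bound $\E\exp(-\zeta_i^2/\varepsilon^2)\le C_0K\varepsilon$ for every $i$ and every $\varepsilon\ge\varepsilon_0$, with $C_0$ absolute; this gives the lemma with $C=eC_0$.

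For the one-dimensional bound I would use the layer-cake identity
$$
\E\exp(-\zeta_i^2/\varepsilon^2)=\int_0^1\Prob\big\{\zeta_i<\varepsilon\sqrt{\log(1/t)}\,\big\}\,dt,
$$
estimating the integrand by $K\varepsilon\sqrt{\log(1/t)}$ via the hypothesis when $\varepsilon\sqrt{\log(1/t)}\ge\varepsilon_0$, and by $\Prob\{\zeta_i\le\varepsilon_0\}\le K\varepsilon_0\le K\varepsilon$ (monotonicity, plus the hypothesis at scale $\varepsilon_0$ and $\varepsilon\ge\varepsilon_0$) otherwise. Since $\varepsilon\ge\varepsilon_0$, the latter case occurs only when $\log(1/t)\le 1$, so in all cases the integrand is at most $K\varepsilon\max\{1,\sqrt{\log(1/t)}\}$; as $\int_0^1\max\{1,\sqrt{\log(1/t)}\}\,dt=(1-e^{-1})+\int_0^{1/e}\sqrt{\log(1/t)}\,dt\le 1-e^{-1}+\Gamma(3/2)<2$, we may take $C_0=2$, hence $C=2e$.

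The step to be careful about is precisely this one-dimensional estimate. A crude bound such as $\exp(-\zeta_i^2/\varepsilon^2)\le\mathbf{1}\{\zeta_i<\varepsilon\}+e^{-1}$ would only yield $\E\exp(-\zeta_i^2/\varepsilon^2)\le K\varepsilon+e^{-1}$, and the additive constant $e^{-1}$ is fatal because $K\varepsilon$ can be arbitrarily small while still $\varepsilon\ge\varepsilon_0$. The point is to use the anti-concentration hypothesis at all scales $s\ge\varepsilon_0$, not just $s\approx\varepsilon$: the Gaussian-type weight $e^{-s^2/\varepsilon^2}$ makes the contribution of large $s$ integrable and keeps $\E\exp(-\zeta_i^2/\varepsilon^2)$ of order $K\varepsilon$. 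Everything else is routine calculus.
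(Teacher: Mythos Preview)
Your argument is correct. The paper does not actually give its own proof of this lemma; it merely quotes the statement and cites \cite{RudVer-square}. Your proof is exactly the standard one from that reference: exponential Chebyshev, factorization by independence, and a layer-cake estimate for $\E\exp(-\zeta_i^2/\varepsilon^2)$ using the anti-concentration hypothesis at all scales $s\ge\varepsilon_0$. The calculus (splitting at $t=e^{-1}$ and bounding $\int_0^{1/e}\sqrt{\log(1/t)}\,dt$ by $\Gamma(3/2)$) is fine, and your remark about why the crude bound $K\varepsilon+e^{-1}$ fails is exactly the point.
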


The tensorization lemma is useful when one wants to control the anti-concentration 
of $|Mx|$ where $M$ is an $m \times n$ random matrix with independent rows $M^i$
and $x$ is a fixed vector. Indeed, in this case $|Mx|^2 = \sum_{i=1}^m \ip{M^i}{x}^2$,
and one can use Lemma~\ref{tensorization} for $Y_i := \ip{M^i}{x}$.
Furthermore, one can use Lemma \ref{smallball-1} 
to control the concentration function of each $Y_i$. This gives:

\begin{lemma}\label{smallball}
Let $M$ be an $m\times n$ random matrix with independent entries $M_{ij}$. 
%satisfying
%$$\max_{i,j} \Le(M_{ij},1)\leq b\quad\mbox{ for some $b \in (0,1).$}$$
Let $L>0$, $c_0>0$ and $u\in(0,1)$.
Then for any $x\in\R^n$ with $|x|\geq c_0$ and any 
$\varepsilon \geq C_{\ref{smallball}}\exp(-\widetilde c_{\ref{smallball}} L^2) + C_{\ref{smallball}}/\RLCD^M_{L,u}(x)$,
we have
$$%\Le(|Mx|, \varepsilon\sqrt{m})
\Prob\big\{|Mx|\leq \varepsilon\sqrt{m}\big\}
\leq (C_{\ref{smallball}}\varepsilon)^m.$$
Here $C_{\ref{smallball}},\widetilde c_{\ref{smallball}}>0$ may only depend on %$b$,
$c_0$ and $u$.
\end{lemma}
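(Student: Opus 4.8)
The plan is to combine the tensorization lemma (Lemma~\ref{tensorization}) with the single-row anti-concentration bound (Lemma~\ref{smallball-1}), essentially as outlined in the paragraph preceding the statement. First I would fix $x\in\R^n$ with $|x|\geq c_0$ and set $Y_i:=\langle M^i,x\rangle$ for $i=1,\dots,m$, where $M^i$ is the $i$-th row of $M$; the rows are independent since the entries are independent, and $|Mx|^2=\sum_{i=1}^m Y_i^2$. Applying Lemma~\ref{smallball-1} to each $Y_i$ with the same parameters $L,u$ and the vector $x$, we obtain, for every $\varepsilon\geq 0$,
$$
\Le(Y_i,\varepsilon)\leq C_1\varepsilon+C_1\exp(-\widetilde c_1 L^2)+\frac{C_1}{\RLCD^{M^i}_{L,u}(x)},
$$
where $C_1,\widetilde c_1>0$ depend only on $c_0,u$. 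By Definition~\ref{def: RLCD matrix}, $\RLCD^{M^i}_{L,u}(x)\geq\RLCD^M_{L,u}(x)$ for every $i$, so the bound above holds uniformly in $i$ with the last term replaced by $C_1/\RLCD^M_{L,u}(x)$.

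Next I would convert this into the hypothesis of the tensorization lemma. Set
$$
\varepsilon_0 := C_1\exp(-\widetilde c_1 L^2)+\frac{C_1}{\RLCD^M_{L,u}(x)}.
$$
Then for every $\varepsilon\geq\varepsilon_0$ we have $\Le(Y_i,\varepsilon)\leq C_1\varepsilon+\varepsilon_0\leq 2C_1\varepsilon$ (using $\varepsilon_0\leq\varepsilon$ and, say, enlarging the constant so that $\varepsilon_0\leq C_1\varepsilon$ — more simply, $\Le(Y_i,\varepsilon)\le 2C_1\varepsilon$ whenever $\varepsilon\ge\varepsilon_0$). Since $\Pr{|Y_i|\leq\varepsilon}\leq\Le(Y_i,\varepsilon)$, each $Y_i$ satisfies $\Pr{|Y_i|\leq\varepsilon}\leq K'\varepsilon$ for all $\varepsilon\geq\varepsilon_0$, with $K':=2C_1$. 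Possibly I would need to first check the mild normalization $\varepsilon_0\in(0,1)$ and $K'\geq 1$ required by Lemma~\ref{tensorization}: if $\varepsilon_0\geq 1$ the claimed conclusion $(C_{\ref{smallball}}\varepsilon)^m$ is vacuous for the relevant range once $C_{\ref{smallball}}$ is chosen appropriately, and $K'\geq1$ can be arranged by enlarging $C_1$, so neither causes trouble.

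Then Lemma~\ref{tensorization} applied to $Y_1,\dots,Y_m$ gives
$$
\Prob\Big\{\sum_{i=1}^m Y_i^2\leq\varepsilon^2 m\Big\}\leq (CK'\varepsilon)^m\quad\text{for all }\varepsilon\geq\varepsilon_0,
$$
i.e. $\Pr{|Mx|\leq\varepsilon\sqrt m}\leq(2CC_1\varepsilon)^m$ for $\varepsilon\geq\varepsilon_0$. Setting $C_{\ref{smallball}}:=2CC_1$ (and $\widetilde c_{\ref{smallball}}:=\widetilde c_1$), and noting that the hypothesis $\varepsilon\geq C_{\ref{smallball}}\exp(-\widetilde c_{\ref{smallball}}L^2)+C_{\ref{smallball}}/\RLCD^M_{L,u}(x)$ in the statement implies $\varepsilon\geq\varepsilon_0$ (after adjusting constants so $C_{\ref{smallball}}\geq C_1$), the conclusion follows. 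I do not expect a genuine obstacle here: the only points requiring care are bookkeeping of the constants — ensuring $K'\geq1$ and folding the additive error term $\varepsilon_0$ into the multiplicative bound $2C_1\varepsilon$ on the range $\varepsilon\geq\varepsilon_0$ — and the observation that $\RLCD^M$ is by definition the minimum over rows, which makes the per-row bounds uniform. The real work is all in the already-proven Lemmas~\ref{smallball-1} and~\ref{tensorization}; this lemma is just their composition.
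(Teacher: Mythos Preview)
Your proposal is correct and follows exactly the approach the paper takes. In fact, the paper does not give a separate proof of this lemma at all; it is presented as an immediate consequence of Lemmas~\ref{smallball-1} and~\ref{tensorization} via the sketch in the preceding paragraph, and your write-up fills in precisely those details (the uniform row bound via Definition~\ref{def: RLCD matrix} and the constant bookkeeping).
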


A crucial property of the RLCD which will enable us to discretize the range of possible realizations
of random unit normals, is {\it stability of RLCD with respect to small perturbations}:

\begin{lemma}[Stability of RLCD]\label{stable-rlcd}
  Consider a random vector $X$ in $\R^n$ with uncorrelated coordinates, 
  a (deterministic) vector $x$ in $\R^n$, and parameters $L,u>0$. 
  Fix any tolerance level $r>0$ that satisfies
  \begin{equation}	\label{eq: r range}
  r^2 \Var(X) \le \frac{1}{8} \min \Big( u|x|^2, \, \frac{L^2}{D^2} \Big)
  \end{equation}
%Assume further that $\varepsilon\in(0,1)$ is such that
%\begin{equation}	\label{eq: epsilon range}
%  \frac{D^2\varepsilon^2}{n^2}\E|X|^2\leq\frac{\gamma^2}{4}
%  \qquad \text{and} \qquad
%  u|x|^2> \frac{4\varepsilon^2}{n}\E|X|^2.
%\end{equation}
  where $D=\RLCD^X_{L,u}(x)$.
Then for any $y\in\R^n$ with $\|x-y\|_{\infty}<r$, we have
  $$
  \RLCD^X_{2L,4u}(y)
  \le \RLCD^X_{L,u}(x)
  \le \RLCD^X_{L/2,u/4}(y).
  $$
\end{lemma}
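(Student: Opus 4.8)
The plan is to prove the two inequalities separately, each by an $\ell^\infty$-perturbation estimate on the quantity $\E\,\dist^2(\theta v\star\overline X,\Z^n)$ as $v$ ranges over the segment from $x$ to $y$. The basic tool is the elementary bound $\dist(a+b,\Z)\le\dist(a,\Z)+|b|$, hence $\dist^2(a+b,\Z)\le 2\dist^2(a,\Z)+2b^2$, applied coordinatewise. Writing $z=y-x$ with $\|z\|_\infty<r$, for any $\theta>0$ we get
$$
\E\,\dist^2(\theta y\star\overline X,\Z^n)
\le 2\,\E\,\dist^2(\theta x\star\overline X,\Z^n)+2\theta^2\sum_i z_i^2\,\E\overline X_i^2
\le 2\,\E\,\dist^2(\theta x\star\overline X,\Z^n)+4\theta^2 r^2\Var(X),
$$
using $\E\overline X_i^2=2\Var(X_i)$ and $\sum_i z_i^2\le n\|z\|_\infty^2$… wait, more carefully $\sum_i z_i^2\,\E\overline X_i^2\le \|z\|_\infty^2\sum_i 2\Var(X_i)=2r^2\Var(X)$ with the convention that $\Var(X)=\sum_i\Var(X_i)$ as in the definition of $\Var$ of a random vector; and symmetrically with $x,y$ swapped. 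I will also need the norm comparison $|\theta y|^2\ge \tfrac12|\theta x|^2-\theta^2 r^2 n$ — actually it is cleaner to use $\big||\theta y|-|\theta x|\big|\le\theta\|z\|_2$, but since we only control $\|z\|_\infty$, I instead observe that the relevant smallness condition \eqref{eq: r range} bounds $r^2\Var(X)$, and $\Var(X)\ge$ (something) is not assumed, so I should phrase everything in terms of $r^2\Var(X)$ directly and avoid needing a lower bound on $|y|$ by noting $\min(u|\theta v|^2,L^2)\ge\min$ of its two arguments separately.

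For the \textbf{upper inequality} $\RLCD^X_{L,u}(x)\le\RLCD^X_{L/2,u/4}(y)$: let $\theta'=\RLCD^X_{L/2,u/4}(y)$, so (taking $\theta$ slightly above $\theta'$ and passing to the limit, or using that the defining set is open) $\E\,\dist^2(\theta' y\star\overline X,\Z^n)<\min(\tfrac u4|\theta' y|^2,\tfrac{L^2}4)$. I want to show $\theta'$ witnesses membership in the defining set for $\RLCD^X_{L,u}(x)$, i.e. $\E\,\dist^2(\theta' x\star\overline X,\Z^n)<\min(u|\theta' x|^2,L^2)$. By the perturbation bound (with roles of $x,y$ reversed), the left side is $\le 2\min(\tfrac u4|\theta' y|^2,\tfrac{L^2}4)+4\theta'^2 r^2\Var(X)\le\tfrac12 L^2/2+4\theta'^2r^2\Var(X)$ on the $L$-side and $\le\tfrac u2|\theta' y|^2+4\theta'^2r^2\Var(X)$ on the other. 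Here is where \eqref{eq: r range} enters: note $\theta'\le D=\RLCD^X_{L,u}(x)$ — this monotonicity in the parameters, $\RLCD^X_{L/2,u/4}\le\RLCD^X_{L,u}$, is immediate since shrinking $L$ and $u$ shrinks the defining set — so $4\theta'^2r^2\Var(X)\le 4D^2 r^2\Var(X)\le\tfrac12\min(u|x|^2,L^2/D^2)\cdot\tfrac{4D^2}{8}$… let me just say: \eqref{eq: r range} gives $r^2\Var(X)\le\tfrac18\min(u|x|^2,L^2/D^2)$, hence $4\theta'^2r^2\Var(X)\le 4D^2\cdot\tfrac18\cdot\tfrac{L^2}{D^2}=\tfrac{L^2}{2}$ on the $L$-side and $\le 4D^2\cdot\tfrac18 u|x|^2=\tfrac{D^2 u|x|^2}{2}$ — and then I convert $|x|$ to $|y|$ via the same perturbation (this costs another factor absorbed by the slack between $u/4$ and $u$). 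Assembling, each side of the $\min$ comes out strictly below $L^2$ resp. $u|\theta'x|^2$, as required.

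For the \textbf{lower inequality} $\RLCD^X_{2L,4u}(y)\le\RLCD^X_{L,u}(x)=D$: run the symmetric argument, now taking $\theta=D$ (or slightly above), using $\E\,\dist^2(D x\star\overline X,\Z^n)<\min(uD^2|x|^2,L^2)$ plus the perturbation bound with the $4\theta^2 r^2\Var(X)\le 4D^2 r^2\Var(X)\le\tfrac12\min(uD^2|x|^2,L^2)$ estimate from \eqref{eq: r range}, to conclude $\E\,\dist^2(D y\star\overline X,\Z^n)<\min(4uD^2|y|^2,(2L)^2)$, after again trading $|x|$ for $|y|$ at the cost of the factor-$4$ slack in $u$; this exhibits $D$ as an admissible $\theta$ for $\RLCD^X_{2L,4u}(y)$.

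The main obstacle I anticipate is bookkeeping the interplay between the $L$-truncation and the $u|v|^2$ truncation: the perturbation inequality mixes a multiplicative factor $2$ on the $\dist^2$ term with an additive $\theta^2 r^2\Var(X)$ term, and one has to route the additive term through the $L^2/D^2$ half of \eqref{eq: r range} (which is exactly why $D$ appears in the hypothesis) while simultaneously checking the $u|v|^2$ half survives the $x\leftrightarrow y$ norm swap. Choosing the slack generously — doubling $L$ and quadrupling $u$ — is precisely what makes all the constants close, so the argument is robust; the only genuine subtlety is that one must use $\RLCD^X_{L/2,u/4}(y)\le D$ (monotonicity) to bound $\theta'$ by $D$ before invoking \eqref{eq: r range}, since \eqref{eq: r range} is stated with $D$ and not with $\RLCD^X_{L/2,u/4}(y)$.
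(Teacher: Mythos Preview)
Your overall approach matches the paper's: bound $\E|(x-y)\star\overline X|^2<2r^2\Var(X)$, then use the elementary inequalities $(a+b)^2\le 2a^2+2b^2$ and $(a+b)^2\ge a^2/2-b^2$ on $\dist^2(\cdot,\Z^n)$, and absorb the perturbation term via \eqref{eq: r range}. Your treatment of the lower inequality $\RLCD^X_{2L,4u}(y)\le D$ is essentially identical to the paper's.

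There is, however, a genuine error in your argument for the upper inequality. You claim $\theta':=\RLCD^X_{L/2,u/4}(y)\le D$ ``by monotonicity in the parameters,'' but the monotonicity goes the \emph{other} way: shrinking $L$ and $u$ shrinks the right-hand side $\min(u|\theta v|^2,L^2)$, making the defining inequality harder to satisfy, so the infimum is \emph{larger}: $\RLCD^X_{L/2,u/4}(v)\ge\RLCD^X_{L,u}(v)$ for any fixed $v$. (Even with the correct sign, that would compare RLCDs of the \emph{same} vector, not of $y$ versus $x$.) In fact $\theta'\ge D$ is exactly the conclusion you are trying to establish, so asserting $\theta'\le D$ mid-proof is circular.

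The paper sidesteps this by arguing in the contrapositive direction: it fixes an arbitrary $\theta<D$ and shows $\E\dist^2(\theta y\star\overline X,\Z^n)\ge\tfrac14\min(u\theta^2|x|^2,L^2)$ via $(a+b)^2\ge a^2/2-b^2$. Here $\theta<D$ is available by hypothesis, so \eqref{eq: r range} applies directly with no circularity, and one concludes $\RLCD^X_{L/2,u/4}(y)\ge D$. Your route can be repaired by a case split (if $\theta'\ge D$ you are done; if $\theta'<D$, your estimate then legitimately uses $\theta'\le D$ and yields $D\le\theta'$, a contradiction), but the paper's formulation is cleaner. A final remark: the paper keeps $|x|$ rather than $|y|$ in the concluding bound and invokes the definition directly, so the ``trading $|x|$ for $|y|$'' you anticipate as a separate step is not actually carried out there.
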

\begin{proof}
Note that 
\begin{equation*}
\E |x\star \overline{X} - y\star \overline{X}|^2
= \E\sum_{i=1}^n \overline{X}_i^2(x_i-y_i)^2
< r^2\E|\overline{X}|^2
= 2 r^2 \Var(X),
\end{equation*}
where the last identity is \eqref{eq: symmetrization var}.
Since $\RLCD^X_{L,u}(x)=D,$ the definition of RLCD yields
\begin{equation*}
\E \dist^2(D x\star \overline{X},\Z^n)= \min (uD^2|x|^2, L^2).
\end{equation*}
By the inequality $(a+b)^2\leq 2a^2+2b^2$, we get
\begin{equation*}
\begin{split}
\E \dist^2(D y\star \overline{X},\Z^n)
&\leq 2\E \dist^2(D x\star \overline{X},\Z^n)+2\E|Dx\star \overline{X}-Dy\star \overline{X}|^2\\
&< 2\min (uD^2|x|^2, L^2) + 4D^2 r^2 \Var(X)
\leq 4\min (uD^2|x|^2, L^2),
\end{split}
\end{equation*}
where the last step follows from our assumptions \eqref{eq: r range} on $r$. 
By definition of RLCD, this immediately gives
$$
\RLCD^X_{2L,4u}(y) \le D,
$$
which proves the first conclusion of the lemma. 

The second conclusion can be derived similarly. For any $\theta<D$, 
the definition of RLCD yields 
$$
\E\dist^2(\theta x\star \overline{X},\Z^n)
\ge \min (u\theta^2|x|^2, L^2).
$$
By the inequality $(a+b)^2\geq a^2/2-b^2$, we get
\begin{align*}
\E\dist^2(\theta y\star \overline{X},\Z^n)
&\geq \frac{1}{2} \E\dist^2(\theta x\star \overline{X},\Z^n)-\E|\theta x\star \overline{X} - \theta y\star \overline{X}|^2\\
&\geq
\frac{1}{2} \min (u \theta^2|x|^2, L^2)
- 2 \theta^2 r^2 \Var(X)
\ge \frac{1}{4} \min (u\theta^2|x|^2, L^2),
\end{align*}
where in the last step we used the bound $\theta < D$ 
and our assumptions \eqref{eq: r range} on $r$. Thus,
$$
\E\dist^2(\theta y\star \overline{X},\Z^n)\geq \min (u\theta^2|x|^2/4, L^2/4)\quad\mbox{ for all }\theta\in(0,D),
$$
and, by the definition of RLCD, this immediately gives
$$
\RLCD^X_{L/2,u/4}(y) \ge D,
$$
%Since $\theta<D$ was arbitrary, it follows that $\RLCD^X_{L/2,u/4}(y) \ge D$,
which proves the second conclusion of the lemma. 
\end{proof}

\medskip

The following result is a version of \cite[Lemma~3.6]{RudVer-general}. 

\begin{lemma}[Incompressible vectors have large RLCD]\label{l: aux incomp rlcd}
For any $b,\delta,\rho\in(0,1)$ there are $n_0=n_0(b,\delta,\rho)$,
$h_{\ref{l: aux incomp rlcd}}=h_{\ref{l: aux incomp rlcd}}(b,\delta,\rho)\in(0,1)$
and $u_{\ref{l: aux incomp rlcd}}=u_{\ref{l: aux incomp rlcd}}(b,\delta,\rho)\in(0,1/4)$ with the following property.
Let $n\geq n_0$, let $x\in Incomp_n(\delta,\rho)$, and assume that a random vector $X=(X_1,\dots,X_n)$ with independent components
satisfies $\Le(X_i,1)\leq b$, $i\leq n$, and $\Var |X|\leq T$, for some fixed parameter $T\geq n$. Then for any $L>0$ we have
$\RLCD^X_{L,u_{\ref{l: aux incomp rlcd}}}(x)\geq h_{\ref{l: aux incomp rlcd}}\cdot\frac{n}{\sqrt{T}}$.
\end{lemma}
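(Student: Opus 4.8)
The plan is to bound $\RLCD^X_{L,u}(x)$ from below by showing that for every $\theta$ below the target threshold $h\cdot n/\sqrt T$, the expected squared distance $\E\dist^2(\theta x\star\overline X,\Z^n)$ is comparable to $\theta^2|x|^2$, hence at least $u\theta^2|x|^2$ for a small enough choice of $u$; this forces the infimum in the definition of the RLCD to be at least $h\cdot n/\sqrt T$. Since $x\in\Incomp_n(\delta,\rho)$, a standard fact (see \cite{RudVer-square}, used already in \cite{Liv,RudVer-general}) is that a linear proportion $\alpha n$ of the coordinates of $x$ satisfy $\rho'/\sqrt n\le |x_i|\le \beta/\sqrt n$ for constants $\rho',\beta,\alpha$ depending only on $\delta,\rho$. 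I would restrict attention to this ``spread'' subset $I$ of coordinates and write $\E\dist^2(\theta x\star\overline X,\Z^n)\ge\sum_{i\in I}\E\dist^2(\theta x_i\overline X_i,\Z)$.

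The core of the argument is a one-dimensional anti-concentration estimate: for a single coordinate, $\E\dist^2(\theta x_i\overline X_i,\Z)\gtrsim \min(1,\theta^2 x_i^2)$, or more precisely that this expectation is bounded below by a constant times $\theta^2 x_i^2$ as long as $\theta|x_i|$ is not too large, and bounded below by an absolute constant once $\theta|x_i|\gtrsim 1$. The key input here is the hypothesis $\Le(X_i,1)\le b<1$, which via the symmetrization $\overline X_i=X_i-X_i'$ gives $\Le(\overline X_i,1)\le b$ as well, hence there is a constant $p=p(b)>0$ such that $\overline X_i$ places mass at least $p$ outside any interval of length $1$; equivalently $\Prob\{|\overline X_i|\ge 1\}\ge p$. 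From this one deduces, for each fixed $\gamma=\theta|x_i|$, a lower bound on $\E\dist^2(\gamma \overline X_i,\Z)$ of the form $c(b)\min(\gamma^2,1)$ — for small $\gamma$ the distance is just $\gamma|\overline X_i|$ with probability $\ge p$, while on the event $|\overline X_i|\ge 1$ one needs a short extra argument (integrating over a range of scales, or using that $\dist^2(\cdot,\Z)$ averages to a positive constant over any unit-length window of the argument) to avoid the pathology of $\gamma\overline X_i$ landing near $\Z$. Summing over $i\in I$ and using $|x_i|\asymp 1/\sqrt n$ on $I$, we get $\E\dist^2(\theta x\star\overline X,\Z^n)\ge c(b,\delta,\rho)\cdot\min(\theta^2,n)$ roughly, which is at least $u_{\ref{l: aux incomp rlcd}}\,\theta^2|x|^2$ for suitably small $u_{\ref{l: aux incomp rlcd}}$ and at least $L^2$ unless $\theta^2\gtrsim L^2$; but we also need the bound to beat $\min(u\theta^2|x|^2,L^2)$ only up to the threshold, so the relevant case is $\theta\le h\cdot n/\sqrt T$, where $\theta^2\le h^2 n^2/T\le h^2 n$ (using $T\ge n$), keeping us in the regime where the lower bound is $\asymp\theta^2$ rather than saturated, hence $\gtrsim\theta^2|x|^2\ge u\theta^2|x|^2$ once $u$ is small. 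The constraint $\Var|X|\le T$ enters to ensure the ``bad'' coordinates where $|x_i\overline X_i|$ is large (via large $\overline X_i$) do not help the lattice-distance become small: more carefully, one uses $\sum_i \E\overline X_i^2=2\Var|X|\le 2T$ together with $\sum_i x_i^2=1$ and a Markov/Chebyshev argument to discard the $O(\cdot)$ coordinates with anomalously large $\E\overline X_i^2$, retaining a linear-size subset of $I$ on which $\E\overline X_i^2$ is bounded by $O(T/n)$, so that $\theta^2 x_i^2\E\overline X_i^2=O(h^2)$ stays small and the one-dimensional lower bound is genuinely of order $\theta^2 x_i^2$.

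The main obstacle I expect is the one-dimensional lemma $\E\dist^2(\gamma\overline X_i,\Z)\gtrsim_b \min(\gamma^2,1)$ in the regime where $\gamma$ is of moderate-to-large size but $\gamma x_i$ could still conspire with the (unknown, non-identically-distributed, possibly atomic) law of $\overline X_i$ to put mass near the integers — this is exactly the arithmetic-structure difficulty that motivates the whole paper, and here it is handled not by a clean Littlewood–Offord bound but by the crude anti-concentration $\Le(\overline X_i,1)\le b$, which only controls mass in windows of length $1$, so one must be content with the weaker ``$\RLCD\gtrsim n/\sqrt T$'' conclusion (rather than exponentially large RLCD) and argue that such a window-level control already forces $\E\dist^2(\gamma\overline X_i,\Z)\ge c(b)$ whenever $\gamma\ge\gamma_0(b)$, e.g. by noting $\dist^2(t,\Z)\ge 1/4$ on a subset of relative measure $1/2$ of any interval of length $\ge 1$ and combining with the spread of $\overline X_i$; making this robust to atoms of $\overline X_i$ is the delicate point. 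Once that lemma and the coordinate-pruning are in place, the summation and the choice of $u_{\ref{l: aux incomp rlcd}}$, $h_{\ref{l: aux incomp rlcd}}$, $n_0$ are routine, and $L$ plays no role since the claimed lower bound holds for all $L>0$ (indeed the $L^2$ cap only makes the RLCD larger, so it suffices to handle the $u|\theta x|^2$ alternative, which we do).
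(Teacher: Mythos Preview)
Your direct approach via summing coordinate-wise lower bounds is sound and yields a valid proof, but it is organized differently from the paper's. The paper argues by contrapositive: assuming $\E\dist^2(\theta x\star\overline X,\Z^n)\ll\theta^2$ at some $\theta$, it lets $p\in\Z^n$ be the nearest lattice point and, through a chain of Markov inequalities on $|\theta x\star\overline X-p|$, on $|\overline X|^2\le 2T$, and on the indicators ${\bf 1}_{\{|\overline X_i|\ge1\}}$, locates (with positive probability in $\overline X$) a single coordinate $i$ where simultaneously $|x_i\overline X_i-p_i/\theta|\ll1/\sqrt n$, $1\le|\overline X_i|\lesssim\sqrt{T/n}$, and $|x_i|\asymp1/\sqrt n$. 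Those three bounds force $p_i\ne0$, hence $|p_i|\ge1$, while also giving $|p_i/\theta|\lesssim\sqrt T/n$; combining yields $\theta\gtrsim n/\sqrt T$. The ingredients (spread set from incompressibility, anti-concentration $\Prob\{|\overline X_i|\ge1\}\ge1-b$, variance control via $\sum_i\E\overline X_i^2\le2T$) are identical to yours; the paper simply avoids isolating a one-dimensional lemma by working with one good coordinate and the integer constraint on $p_i$.

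One correction to your write-up: the abstract one-dimensional claim $\E\dist^2(\gamma\overline X_i,\Z)\gtrsim_b\min(\gamma^2,1)$ is \emph{false} for large $\gamma$ (take $X_i$ Bernoulli so that $\overline X_i\in\{-1,0,1\}$; at $\gamma=1$ the distance is identically zero), and no averaging-over-windows device can rescue this for atomic laws. Your perceived ``main obstacle'' is therefore not a delicate point but an impossibility at the single-coordinate level. Fortunately it is also irrelevant: with your prune to $\E\overline X_i^2\le CT/n$ and with $\theta\le hn/\sqrt T$, $|x_i|\le\beta/\sqrt n$, one has $\gamma\cdot C'\sqrt{T/n}\le h\beta C'\le1/2$ for $h$ small, so on the event $\{1\le|\overline X_i|\le C'\sqrt{T/n}\}$ (probability $\ge(1-b)/2$ by Markov) one gets $\dist(\gamma\overline X_i,\Z)=\gamma|\overline X_i|\ge\gamma$ directly. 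In other words, the only regime you ever enter is the trivial one, and the sum $\sum_{i\in I'}\E\dist^2(\theta x_i\overline X_i,\Z)\gtrsim(1-b)\sum_{i\in I'}\theta^2 x_i^2\gtrsim\theta^2$ goes through; the large-$\gamma$ discussion should simply be dropped.
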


\begin{proof}
For clarity of the argument, we shall often hide the parameters $b$, $\delta$, $\rho$, $h_{\ref{l: aux incomp rlcd}}$, 
and $u_{\ref{l: aux incomp rlcd}}$ in the notation such as $\lesssim, \gtrsim$;
the reader will find it easy to fill in the details. 

By definition of RLCD and since $x$ is a unit vector, it suffices to show that
$$
\E \dist^2(\theta x\star \overline{X},\mathbb{Z}^n) 
\gtrsim \theta^2
\quad \forall \; \theta \in \left(0, h_{\ref{l: aux incomp rlcd}}\cdot\frac{n}{\sqrt{T}}\right).
$$
Suppose that 
$$
\E \dist^2(\theta x\star \overline{X},\mathbb{Z}^n) \ll \theta^2
$$
for some $\theta>0$; we want to show that in this case $\theta \gtrsim \frac{n}{\sqrt{T}}$. 
Let $p \in \Z^n$ denote a closest integer vector to $\theta x\star \overline{X}$; 
note that $p$ is a random vector. 
Then $\E \abs[0]{\theta x\star \overline{X}-p}^2 \ll \theta^2$, and Markov's inequality yields that
$\abs[0]{\theta x\star \overline{X}-p} \ll \theta$
with high probability. Dividing both sides by $\theta$ gives
$\abs[0]{x \star \overline{X} - p/\theta} \ll 1$, so another application of Markov's inequality
shows that 
$$
\abs[2]{x_i \overline{X}_i - \frac{p_i}{\theta}} \ll \frac{1}{\sqrt{n}}
\quad \text{for $n-o(n)$ coordinates $i$}.
$$
Moreover, $\E \abs[1]{\overline{X}}^2 = 2 \Var \abs{X} \le 2T$ by \eqref{eq: symmetrization var}.
So a similar double application of Markov's inequality shows that, with high probability,
$$
\abs[1]{\overline{X}_i} \lesssim \sqrt{\frac{T}{n}}
\quad \text{for $n-o(n)$ coordinates $i$}.
$$

Furthermore, incompressible vectors are ``spread'' in the sense that 
$$
I \coloneqq \Big\{ i: \; \abs{x_i} \asymp \frac{1}{\sqrt{n}} \Big\} 
\quad \text{satisfies} \quad 
\abs{I} \gtrsim n.
$$
This fact is easy to check; a formal proof can be found in \cite[Lemma~3.4]{RudVer-square}.

Finally, the assumption on the concentration function shows that $\Prob \big\{\abs[1]{\overline{X}_i} \ge 1\big\} \ge b$.
By the independence of $\overline{X}_i$'s %pigeonhole principle,
this implies that, with high probability,  
$$
\abs[1]{\overline{X}_i} \ge 1
\quad \text{for $b\abs{I}/2 \gtrsim n$ coordinates $i \in I$}
$$
(this conclusion follows by considering the sum of independent indicator variables ${\bf 1}_{\{|\overline{X}_i|\geq 1\}}$, $i\in I$).

Taking the intersection of these events and sets of coordinates, we see that
with high probability there must exist a coordinate $i$ for which we have simultaneously
the following three bounds:
$$
\abs[2]{x_i \overline{X}_i - \frac{p_i}{\theta}} \ll \frac{1}{\sqrt{n}}, \quad
1 \le \abs[1]{\overline{X}_i} \lesssim \sqrt{\frac{T}{n}}, \quad
\abs{x_i} \asymp \frac{1}{\sqrt{n}}.
$$
Then, using the triangle inequality, we get
$$
\abs[2]{\frac{p_i}{\theta}} 
\ge \abs[1]{x_i \overline{X}_i} - o \Big( \frac{1}{\sqrt{n}} \Big)
\ge \frac{c}{\sqrt{n}} \cdot 1 - o \Big( \frac{1}{\sqrt{n}} \Big)
> 0.
$$
Thus $p_i \ne 0$, and since $p_i$ is an integer, we necessarily have $\abs{p_i} \ge 1$. 

On the other hand, a similar application of the triangle inequality gives 
$$
\abs[2]{\frac{p_i}{\theta}} 
\le \abs[1]{x_i \overline{X}_i} + o \Big( \frac{1}{\sqrt{n}} \Big)
\lesssim \frac{1}{\sqrt{n}} \cdot \sqrt{\frac{T}{n}} + o \Big( \frac{1}{\sqrt{n}} \Big)
\lesssim \frac{\sqrt{T}}{n}.
$$
This yields that $\theta \gtrsim \abs{p_i}\cdot\frac{n}{\sqrt{T}} \ge \frac{n}{\sqrt{T}}$, as claimed. 
\end{proof}

%CONCLUSION: EVERYTHING IS GOOD, NO NEED TO DO ANYTHING AT ALL!

\section{Discretization} \label{s: discretization}
%-----------

In this section we outline the required discretization results. They essentially follow from the results in Section 3 of \cite{Liv}, however they are not stated there in the form we need, and thus we repeat certain arguments here. %See remark \ref{expl} below for more explanation of some nuances in the presentation of this section, and its comparison with Section 3 from \cite{Liv}.

\begin{definition}[Discretization, part 1]
  Given a vector of weights $\alpha \in \R^n$ and a resolution parameter $\varepsilon>0$, 
  we consider the set of approximately unit vectors 
  whose coordinates are quantized at scales $\alpha_i \varepsilon/\sqrt{n}$. 
  Precisely, we define
  $$
  \Lambda_{\alpha}(\varepsilon)
  := \Big(\frac{3}{2}B_2^n\setminus\frac{1}{2}B_2^n \Big)
  \cap \left(\frac{\alpha_1 \varepsilon}{\sqrt{n}} \Z \times \dots \times \frac{\alpha_n \varepsilon}{\sqrt{n}} \Z \right).
  $$ 
\end{definition}

%Our argument relies on the method of \emph{random rounding}. This technique has been used in computer science for decades, in particular by Alon and Klartag \cite{KA}, Kannan and Vempala \cite{KV}, and many others; see the survey by Srinivasan \cite{Srin}. 
%Outside theoretical computer science, random rounding was recently 
%used in asymptotic high dimensional geometry by Klartag and Livshyts \cite{KlLiv} 
%and in random matrix theory by Livshyts \cite{Liv} and Tikhomirov \cite{Tikh}. 
%
%\begin{definition}[Randomized rounding]
%Fix $\varepsilon>0,$ $\kappa\geq 1$ and $\alpha\in\Omega_{\kappa}.$
%	For $\xi \in \R^n$
%	 consider a random vector ${\eta}^{\xi} \in \frac{\alpha_1\varepsilon}{\sqrt{n}} \Z\times \frac{\alpha_2\varepsilon}{\sqrt{n}} \Z\times...\times \frac{\alpha_n\varepsilon}{\sqrt{n}} \Z$ with independent coordinates such that $|\xi_i - {\eta}^{\xi}_i| \leq \frac{\alpha_i\varepsilon}{\sqrt{n}}$ 
%	 with probability one, and $\E {\eta}^{\xi}=\xi$. Namely,  
%	 for $i=1,\ldots,n$, writing $\xi_i = 
%	 \frac{\alpha_i\varepsilon}{\sqrt{n}} (k_i + p_i)$ for an integer $k_i$ and $p_i \in [0,1)$,
%	\[
%	{\eta}^{\xi}_i= 
%	\begin{cases}
%	\frac{\alpha_i\varepsilon}{\sqrt{n}} k_i ,& \text{with probability}\,\,\, 1-p_i \\
%	\frac{\alpha_i\varepsilon}{\sqrt{n}} (k_i+1), & \text{with probability}\,\,\, p_i.
%	\end{cases}
%	\]
%\end{definition}
%
%We call a \emph{range of a random vector} the set of the possible values the random vector takes. We formulate the following observation.

\begin{lemma}[Rounding]	\label{keylemmarounding}
  Fix any accuracy $\varepsilon\in (0,1/2)$, 
  a weight vector $\alpha \in [0,1]^n$, and any (deterministic) $N\times n$ matrix $A$
  whose columns we denote $A_i$.
  Then for any $x\in\sfe$ one can find $y\in \Lambda_{\alpha}(\varepsilon)$ such that
  $$
  \norm{x-y}_{\infty}\leq \frac{\varepsilon}{\sqrt{n}}
  \quad \text{and} \quad
  \abs{A(x-y)} \le \frac{\varepsilon}{\sqrt{n}} \Big( \sum_{j=1}^n \alpha^2_j \abs{A_j}^2 \Big)^{1/2}.
  $$
\end{lemma}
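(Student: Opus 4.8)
The plan is to use \emph{random rounding}, independently in each coordinate. We may assume $\alpha_i>0$ for all $i$ (if $\alpha_i=0$ the $i$-th lattice collapses to $\{0\}$ and one is forced to take $y_i=0$; this degenerate case is not needed in the applications and can be excluded). Put $\delta_i:=\alpha_i\varepsilon/\sqrt n$, the mesh of the $i$-th lattice. For each $i$ write $x_i/\delta_i=m_i+p_i$ with $m_i=\lfloor x_i/\delta_i\rfloor\in\Z$ and $p_i=\{x_i/\delta_i\}\in[0,1)$, and define a random variable $y_i$, with the $y_i$ independent across $i$, by
$$
y_i=\begin{cases}\delta_i m_i,&\text{with probability }1-p_i,\\ \delta_i(m_i+1),&\text{with probability }p_i.\end{cases}
$$
Then $y:=(y_1,\dots,y_n)$ belongs to the lattice $\frac{\alpha_1\varepsilon}{\sqrt n}\Z\times\cdots\times\frac{\alpha_n\varepsilon}{\sqrt n}\Z$ for every realization, $\E y_i=\delta_i(m_i+p_i)=x_i$, and the $x_i-y_i$ are independent with mean zero.

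First I would record the estimates that hold \emph{deterministically}, for every realization of $y$. Since $|x_i-y_i|\le\delta_i\le\varepsilon/\sqrt n$ (using $\alpha_i\le1$), we get $\|x-y\|_\infty\le\varepsilon/\sqrt n$. Also $|x-y|^2=\sum_i(x_i-y_i)^2\le\sum_i\delta_i^2=\frac{\varepsilon^2}{n}\sum_i\alpha_i^2\le\varepsilon^2$, so $|x-y|\le\varepsilon<\tfrac12$; hence $\tfrac12<|y|<\tfrac32$, i.e. $y\in\frac32 B_2^n\setminus\frac12 B_2^n$. Combined with the lattice membership, this gives $y\in\Lambda_\alpha(\varepsilon)$ for every realization.

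It remains to control $|A(x-y)|$, and here the randomization pays off. Writing $A(x-y)=\sum_i(x_i-y_i)A_i$ and using that the $x_i-y_i$ are independent and mean zero, all cross terms vanish, so
$$
\E|A(x-y)|^2=\sum_i\E(x_i-y_i)^2\,|A_i|^2=\sum_i\Var(y_i)\,|A_i|^2\le\frac{\varepsilon^2}{n}\sum_i\alpha_i^2|A_i|^2,
$$
where I used $\Var(y_i)=\delta_i^2 p_i(1-p_i)\le\delta_i^2$. Consequently there exists at least one realization of $y$ with $|A(x-y)|^2\le\frac{\varepsilon^2}{n}\sum_i\alpha_i^2|A_i|^2$; for that realization the two deterministic bounds above also hold, and $y\in\Lambda_\alpha(\varepsilon)$, which is exactly the assertion. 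There is no substantial obstacle in this argument: the only points needing (mild) care are the verification of the annulus membership, which is where $\varepsilon<1/2$ enters, and setting up the rounding so that independence and the mean-zero identity $\E y_i=x_i$ are genuinely available.
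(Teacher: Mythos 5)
Your proof is correct and follows essentially the same random-rounding argument as the paper: independent coordinate-wise rounding with $\E y_i = x_i$, deterministic verification of the $\ell_\infty$ and annulus constraints, and a second-moment computation (using independence and mean zero) to show a good realization exists. The one small point you add — that the lemma implicitly needs $\alpha_i > 0$ (or $|x_i|\le\varepsilon/\sqrt n$) for the $\ell_\infty$ bound to be achievable — is a genuine observation about the statement's edge case, but it is harmless in all subsequent applications where the weights are bounded away from zero.
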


\begin{proof} 
Our construction of $y$ is probabilistic and amounts to \emph{random rounding} of $x$. 
The technique of random rounding has been used in computer science
(see the survey by Srinivasan \cite{Srin}, papers \cite{KA}, \cite{KV}), 
asymptotic convex geometry \cite{KlLiv} 
and random matrix theory \cite{Liv, Tikh}.

A random rounding of $x \in \sfe$ is a random vector $y$ with independent coordinates that takes values in 
the $\Lambda_{\alpha}(\varepsilon)$ and satisfies
$\E y = x$
and 
\begin{equation}	\label{eq: random rounding}
\abs{x_j-y_j} \le \frac{\alpha_j \varepsilon}{\sqrt{n}}, \quad j=1,\ldots,n,
\quad \text{for any realization of $y$}.
\end{equation}
One can construct such a distribution of $y$ by rounding each coordinate of $x$ 
up or down, at random, to a neighboring point in the lattice $(\alpha_j \e/\sqrt{n}) \Z$. 
The identity $\E y = x$ can be enforced by choosing the probabilities 
of rounding up and down accordingly.\footnote{Precisely, if 
$x_j= (\alpha_j \e/\sqrt{n})(k_j+ p_j)$ for some $k_j \in \Z$ and $p_j \in [0,1)$, 
we let $y_j$ take value $(\alpha_j \e/\sqrt{n})k_j$ with probability $1-p_j$
and value $(\alpha_j \e/\sqrt{n})(k_j + 1)$ with probability $p_j$. Clearly, 
this yields $\E y = x$.}

To check that $y$ indeed takes values in $\Lambda_{\alpha}(\varepsilon)$, 
note that the bound in \eqref{eq: random rounding} and the assumption that $\alpha_i \in [0,1]$
imply
\begin{equation}	\label{eq: norminf x-y}
\norm{x-y}_{\infty}\leq \frac{\varepsilon}{\sqrt{n}}
\quad \text{for any realization of $y$}.
\end{equation}
It follows that $\norm{x-y}_2 \le \varepsilon < 1/2$, and since $\norm{x}_2=1$, this implies
by triangle inequality that $1/2 < \norm{y}_2 < 3/2$. This verifies that the random vector
$y$ takes values in $\Lambda_{\alpha}(\varepsilon)$ as we claimed.

Finally, we have
\begin{align*} 
\E \abs{A(x-y)}^2
&= \E \abs[3]{\sum_{j=1}^n (x_j-y_j) A_j}^2
= \sum_{i=1}^n \E (x_j-y_j)^2 \cdot  \abs{A_j}^2
	\quad \text{(since $\E(x_j-y_j) = 0$)} \\
&\le \frac{\varepsilon^2}{n} \sum_{j=1}^n \alpha_j^2 \abs{A_j}^2
	\quad \text{(using the bound in \eqref{eq: random rounding})}. \\
\end{align*}
Combining this with $\eqref{eq: norminf x-y}$, we conclude that 
there exists a realization of the random vector $y$ 
that satisfies the conclusion of the lemma. 
\end{proof}

\begin{lemma}			\label{lem: ell1 net}
  Let $M \ge 1$. There exists a subset $\Xi \subset \R^n_+$ of cardinality 
  at most $(CM)^n$ and such that the following holds.
  For every vector $x \in \R^n_+$ with $\norm{x}_1 \le Mn$ there exists
  $y \in \Xi$ such that  $\norm{y}_1 \le (M+1)n$ and $y \ge x$ coordinate-wise.
\end{lemma}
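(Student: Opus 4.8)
The plan is to take $\Xi$ to be simply the set of \emph{all} nonnegative integer points lying in the dilated simplex:
$$
\Xi := \big\{ y \in \Z_+^n : \norm{y}_1 \le (M+1)n \big\} \subset \R^n_+ .
$$
Given $x \in \R^n_+$ with $\norm{x}_1 \le Mn$, I would round each coordinate up: set $y_i := \lceil x_i \rceil$ for $i=1,\dots,n$. Then $y \ge x$ coordinate-wise by construction, $y$ has nonnegative integer entries, and
$$
\norm{y}_1 = \sum_{i=1}^n \lceil x_i \rceil \le \sum_{i=1}^n (x_i + 1) = \norm{x}_1 + n \le (M+1)n,
$$
so $y \in \Xi$. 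This gives the covering property of the lemma immediately.

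It then remains to bound $\sharp \Xi$. The number of $y \in \Z_+^n$ with $\sum_{i=1}^n y_i \le (M+1)n$ equals, after introducing one slack coordinate, the number of nonnegative integer solutions of $\sum_{i=1}^{n+1} y_i = (M+1)n$, which by the stars-and-bars count is $\binom{(M+1)n + n}{n} = \binom{(M+2)n}{n}$. Using the elementary estimate $\binom{a}{b}\le (ea/b)^b$ with $a=(M+2)n$, $b=n$, we get $\sharp \Xi \le \big(e(M+2)\big)^n$, and since $M \ge 1$ forces $M+2 \le 3M$, this is at most $(3eM)^n$. Taking $C := 3e$ finishes the argument.

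I do not anticipate a real obstacle here. The only points that matter are (i) rounding up costs at most $n$ in the $\ell^1$ norm, which is exactly why the enlarged budget $(M+1)n$ suffices, and (ii) the crude ``all integer points of a simplex'' set is exponential in $n$ \emph{only} — not in $n\log n$ — which is precisely what the $\ell^1$ constraint buys us (a coordinate-wise grid of fixed mesh would be far too large). One could alternatively run the cardinality count via a volumetric comparison with the simplex $\{y \ge 0 : \norm{y}_1 \le (M+2)n\}$, but the binomial estimate above is the cleanest.
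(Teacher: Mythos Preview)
Your proof is correct and is essentially the same as the paper's own argument: the paper also sets $y:=\lceil x\rceil$ coordinate-wise, observes $\norm{y}_1\le \norm{x}_1+n\le (M+1)n$, and then bounds the number of integer points in the $\ell_1$-ball of radius $(M+1)n$ by $(CM)^n$. The only cosmetic difference is that the paper cites classical references for the integer-point count, whereas you give the explicit stars-and-bars computation $\binom{(M+2)n}{n}\le (e(M+2))^n\le (3eM)^n$.
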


\begin{proof}
Define $y \coloneqq \lceil x \rceil$ where the ceiling function is applied coordinate-wise.
Then $\norm{y}_1 \le \norm{x}_1 + n \le (M+1)n$ as claimed. 
In particular, there are as many vectors $y$ as there are integer points in the $\ell_1$-ball
$\{z \in \R^n \;:\; \norm{z}_1 \le (M+1) n\}$.
According to classical results (see \cite[Exercise 29]{PS}, \cite{Schutt}),
the number of integer points in this ball is bounded by $(CM)^n$ (see also \cite{KlLiv} for a similar covering argument).
The lemma is proved.
\end{proof}

Fix $\kappa>e$ and consider the set 
\begin{equation}\label{Omega}
\Omega_{\kappa}:=\Big\{\alpha \in [0,1]^n: \; \prod_{j=1}^n \alpha_j\geq \kappa^{-n}\Big\}.
\end{equation}

The following result is a corollary of \cite[Lemma~3.11]{Liv}.

\begin{lemma}	\label{netsonnets}
  For any $\kappa>e$ there exists a subset $\mathcal{F}\subset \Omega_{e\kappa}$ of cardinality 
  at most $({C}{\log\kappa})^n$ 
  and such that the following holds. 
  For every vector $\beta\in\Omega_{\kappa}$ there exists $\alpha\in \mathcal{F}$ 
  such that $\alpha \le \beta$ coordinate-wise.
\end{lemma}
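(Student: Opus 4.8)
The plan is to reduce the statement to the net construction of Lemma~\ref{lem: ell1 net} by passing to logarithms. Given $\beta \in \Omega_\kappa$, the constraint $\prod_{j=1}^n \beta_j \ge \kappa^{-n}$ together with $\beta_j \in [0,1]$ says exactly that the vector $x$ defined coordinate-wise by $x_j := -\log \beta_j$ lies in $\R^n_+$ and satisfies $\sum_{j=1}^n x_j \le n\log\kappa$. So I would first apply Lemma~\ref{lem: ell1 net} with $M := \log\kappa$ (note $M \ge 1$ since $\kappa > e$) to the vector $x$. This produces a point $y$ in a set $\Xi$ of cardinality at most $(C\log\kappa)^n$, with $y \ge x$ coordinate-wise and $\norm{y}_1 \le (M+1)n = n\log(e\kappa)$.

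Next I would transport $\Xi$ back through the exponential map: set $\mathcal{F} := \{\, \exp(-y) := (e^{-y_1},\dots,e^{-y_n}) : y \in \Xi,\ y \ge 0,\ \norm{y}_1 \le n\log(e\kappa)\,\}$, keeping only those $y \in \Xi$ that actually arise (which we may harmlessly intersect with $\R^n_+$). Then $\sharp \mathcal{F} \le \sharp\Xi \le (C\log\kappa)^n$, matching the claimed bound. For a given $\beta \in \Omega_\kappa$ and the corresponding $y$, the coordinate-wise inequality $y_j \ge x_j = -\log\beta_j$ gives $e^{-y_j} \le \beta_j$, i.e. $\alpha := \exp(-y) \le \beta$ coordinate-wise, which is the approximation property we want. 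Moreover each $\alpha_j = e^{-y_j} \le 1$ since $y_j \ge 0$, and $\prod_{j=1}^n \alpha_j = e^{-\norm{y}_1} \ge e^{-n\log(e\kappa)} = (e\kappa)^{-n}$, so $\alpha \in \Omega_{e\kappa}$ as required.

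I do not anticipate a genuine obstacle here; the content is entirely in Lemma~\ref{lem: ell1 net} (the counting of integer points in an $\ell_1$-ball), and the present lemma is just the change of variables $\alpha \leftrightarrow -\log\alpha$, under which the multiplicative constraint defining $\Omega_\kappa$ becomes the additive $\ell_1$ constraint, the coordinate-wise order is preserved (since $t \mapsto -\log t$ is monotone), and the cube $[0,1]^n$ corresponds to the positive orthant. The only minor points to be careful about are: handling coordinates $\beta_j = 0$, which force $\prod \beta_j = 0 < \kappa^{-n}$ and so cannot occur for $\beta \in \Omega_\kappa$, so $x = -\log\beta$ is well-defined and finite; and making sure that when we round up in Lemma~\ref{lem: ell1 net} we do not push $\norm{y}_1$ past $n\log(e\kappa)$, which is guaranteed by the bound $\norm{y}_1 \le (M+1)n$ with $M = \log\kappa$. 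One then states the result with the universal constant $C$ inherited (up to an absolute factor) from Lemma~\ref{lem: ell1 net}.
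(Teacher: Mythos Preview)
Your proposal is correct and is essentially identical to the paper's own proof: the paper simply says to apply Lemma~\ref{lem: ell1 net} with $x=-\log\beta$, $y=-\log\alpha$, $M=\log\kappa$, and you have spelled out exactly this change of variables with all the verifications.
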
 

\begin{proof}
Apply Lemma~\ref{lem: ell1 net} for $x = -\log \beta$, $y = -\log \alpha$ (defined coordinate-wise)
and $M = \log \kappa$.
\end{proof}

\begin{definition}[Discretization -- part 2]\label{def: disc part 2} Assuming the dimension $n$ fixed, for the parameters $\kappa>e$ and $\varepsilon>0$, we shall use notation
\begin{equation}\label{mainLambda}
\Lambda^{\kappa}(\varepsilon):=\bigcup_{\alpha\in\mathcal{F}} \Lambda_{\alpha} (\varepsilon),
  \end{equation}
with $\mathcal{F}$ being the set whose existence is guaranteed by Lemma \ref{netsonnets}.
\end{definition}

\begin{remark}\label{rem: card of Lambda}
It is immediate from the above definition that for any $\kappa>e$ there is $C_\kappa>0$ depending
only on $\kappa$ such that $\sharp \Lambda^{\kappa}(\varepsilon)\leq \sum\limits_{\alpha\in\mathcal F}\sharp \Lambda_{\alpha} (\varepsilon)\leq (C_\kappa /\varepsilon)^n$
for every $\varepsilon\in(0,1]$.
\end{remark}

\medskip

The following notion from \cite{Liv} will help us to control the norms of the columns $A_j$ of 
an $N\times n$ matrix $A$ in the absence of any distributional assumptions on $A_j$:
$$
\B_{\kappa}(A)
\coloneqq \min \Big\{ \sum_{j=1}^n \alpha_j^2 |A_j|^2 \;:\; \alpha \in \Omega_\kappa \Big\}.
$$

\begin{theorem}	\label{determin}
  Fix $\varepsilon \in (0,1/2)$, $\kappa>e$, and  
  any (deterministic) $N\times n$ matrix $A$. 
  Then for every $x\in \sfe$ one can find $y\in \Lambda^{\kappa}(\varepsilon)$ so that
  $$
  \norm{x-y}_{\infty}\leq \frac{\varepsilon}{\sqrt{n}}
  \quad \text{and} \quad
  \abs{A(x-y)} \le \frac{\varepsilon}{\sqrt{n}} \sqrt{\B_{\kappa}(A)}.
  $$
\end{theorem}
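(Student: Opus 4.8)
The plan is to reduce Theorem~\ref{determin} to the already-proved Lemma~\ref{keylemmarounding} (Rounding) by first selecting a near-optimal weight vector for $\B_\kappa(A)$ and then replacing it by a slightly larger vector drawn from the finite net $\mathcal F$ of Lemma~\ref{netsonnets}. First I would fix $x\in\sfe$ and choose $\beta\in\Omega_\kappa$ achieving (or nearly achieving) the minimum in the definition of $\B_\kappa(A)$, so that $\sum_{j=1}^n \beta_j^2|A_j|^2 = \B_\kappa(A)$ (the minimum is attained since $\Omega_\kappa$ is compact). Since $\beta\in\Omega_\kappa$, Lemma~\ref{netsonnets} produces $\alpha\in\mathcal F\subset\Omega_{e\kappa}$ with $\alpha\le\beta$ coordinate-wise; in particular $\alpha\in[0,1]^n$, so Lemma~\ref{keylemmarounding} applies with this $\alpha$.

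Next I would apply Lemma~\ref{keylemmarounding} to $x$, the accuracy $\varepsilon\in(0,1/2)$, the weight vector $\alpha$, and the matrix $A$. This yields $y\in\Lambda_\alpha(\varepsilon)$ with
$$
\norm{x-y}_\infty \le \frac{\varepsilon}{\sqrt n}
\quad\text{and}\quad
\abs{A(x-y)} \le \frac{\varepsilon}{\sqrt n}\Big(\sum_{j=1}^n \alpha_j^2\abs{A_j}^2\Big)^{1/2}.
$$
Since $\alpha\in\mathcal F$, by definition $\Lambda_\alpha(\varepsilon)\subset\Lambda^\kappa(\varepsilon)$, so $y$ lies in the required net. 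It remains to control the right-hand side of the second estimate: using $\alpha_j\le\beta_j$ coordinate-wise, we get $\sum_{j=1}^n \alpha_j^2\abs{A_j}^2 \le \sum_{j=1}^n \beta_j^2\abs{A_j}^2 = \B_\kappa(A)$, whence $\abs{A(x-y)} \le \frac{\varepsilon}{\sqrt n}\sqrt{\B_\kappa(A)}$, exactly as claimed. This is the entire argument.

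There is essentially no hard step here — the theorem is a bookkeeping combination of the rounding lemma with the net-on-nets lemma, and the only things to be careful about are: (i) checking that the minimizer $\beta$ for $\B_\kappa(A)$ exists, or else working with a near-minimizer and noting that $\Lambda^\kappa(\varepsilon)$ being built from the full net $\mathcal F$ absorbs the slack; and (ii) making sure the monotonicity $\alpha\le\beta$ is used in the correct direction, so that passing from $\beta$ to the net point $\alpha$ only \emph{decreases} the weighted column-norm sum while keeping $\alpha\in[0,1]^n$ as required by Lemma~\ref{keylemmarounding}. Both are routine; the only mild subtlety worth a sentence in the write-up is that $\mathcal F\subset\Omega_{e\kappa}$ rather than $\Omega_\kappa$, which is harmless because we never need $\alpha$ back in $\Omega_\kappa$ — we only need $\alpha\in[0,1]^n$ and $\alpha\le\beta$.
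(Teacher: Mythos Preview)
Your proposal is correct and follows essentially the same approach as the paper: both combine Lemma~\ref{keylemmarounding} with Lemma~\ref{netsonnets} and the coordinate-wise monotonicity $\alpha\le\beta$ to pass from $\mathcal F$ to $\Omega_\kappa$. The only cosmetic difference is the order of operations --- the paper first minimizes $\sum_j\alpha_j^2|A_j|^2$ over $\alpha\in\mathcal F$ and then bounds this minimum by $\B_\kappa(A)$ via Lemma~\ref{netsonnets}, whereas you first pick a minimizer $\beta\in\Omega_\kappa$ and then descend to $\alpha\in\mathcal F$; the content is identical.
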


\begin{proof} 
By Lemma \ref{keylemmarounding}, for any $x\in\sfe$ 
we can find $y\in \Lambda^{\kappa}(\varepsilon)$ that approximates $x$ in the $\ell_\infty$ norm 
as required, and such that
\begin{align*} 
\abs{A(x-y)}
&\le \frac{\varepsilon}{\sqrt{n}} 
	\Big( \min_{\alpha\in \mathcal{F}} \sum_{j=1}^n \alpha_j^2 \abs{A_j}^2 \Big)^{1/2} 
\le \frac{\varepsilon}{\sqrt{n}} 
	\Big( \min_{\beta\in \Omega_{\kappa}} \sum_{j=1}^n \beta_j^2 \abs{A_j}^2 \Big)^{1/2}
	\quad \text{(by Lemma~\ref{netsonnets})}\\
&= \frac{\varepsilon}{\sqrt{n}} \sqrt{\B_{\kappa}(A)}.
\end{align*}
The proof is complete.
\end{proof}

%\begin{remark}
%It is important to note here that in any fixed $n$-dimensional space, the choice of the net $\Lambda^{\kappa}(\varepsilon)$ depends only on $\varepsilon$ and $\kappa$, and does not depend on the matrix $A$. One must notice, however, that the choice of $y$ for each $x$ does depend on $A$.
%\end{remark}

%\begin{remark} Observe that for any $\alpha\in \Omega_{e\kappa}$ one has 
%\begin{equation}\label{netbnd1}
%\# \Lambda_{\alpha}(\varepsilon)\leq \frac{(C\kappa)^n}{\varepsilon^{n-1}}.
%\end{equation}
%The proof of this fact is standard and can be found, e.g. in \cite{Liv}. Recall also that $\#\mathcal{F}\leq e^{C_1n}$. Therefore,  
%\begin{equation}\label{netbnd2}
%\# \Lambda^{\kappa}(\varepsilon)\leq \frac{(C_0\kappa)^n}{\varepsilon^{n-1}}.
%\end{equation}
%Here $C, C_0, C_1$ are absolute constants, independent of the dimension and of any parameters.
%\end{remark}
%
%\medskip

Lastly, we recall the important property concerning the large deviation behavior of $\B_{\kappa}$; here Lemma 3.11 from \cite{Liv} is quoted with a specific choice of parameters.

\begin{lemma}[Lemma 3.11 from \cite{Liv}]\label{ldpB}
  Let $A$ be a random matrix with independent columns.
  Then for any $\kappa>e$, we have
  $$
  \Pr{ \B_{\kappa}(A)\geq 2\E\|A\|_\HS^2 }
  \leq \left(\frac{\kappa}{\sqrt{2}}\right)^{-2n}.
  $$
\end{lemma}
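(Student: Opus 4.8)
The plan is to prove the large deviation bound for $\B_{\kappa}(A)$ by reducing the problem to a union bound over a net of weight vectors, following the strategy already used for $\Omega_\kappa$. First I would recall that $\B_\kappa(A) = \min_{\alpha \in \Omega_\kappa} \sum_{j=1}^n \alpha_j^2 |A_j|^2$, so the event $\{\B_\kappa(A) \ge 2\E\|A\|_\HS^2\}$ is the event that \emph{every} $\alpha \in \Omega_\kappa$ satisfies $\sum_j \alpha_j^2 |A_j|^2 \ge 2\E\|A\|_\HS^2$. To bound the probability of this event, the key observation is that it suffices to verify the inequality on a suitable countable (indeed finite, up to a limiting argument) collection of weight vectors that is ``dense from below'': if $\alpha \le \beta$ coordinate-wise, then $\sum_j \alpha_j^2 |A_j|^2 \le \sum_j \beta_j^2 |A_j|^2$, so it is enough to exhibit, for the witness, a vector in a net that dominates it from below. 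This is exactly the role played by Lemma~\ref{netsonnets}: for any $\kappa > e$ there is a set $\mathcal{F} \subset \Omega_{e\kappa}$ of cardinality at most $(C\log\kappa)^n$ such that every $\beta \in \Omega_\kappa$ dominates some $\alpha \in \mathcal{F}$.

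Next I would carry out the union bound. For a fixed deterministic $\alpha \in [0,1]^n$ we have $\E \sum_{j=1}^n \alpha_j^2 |A_j|^2 = \sum_j \alpha_j^2 \E|A_j|^2 \le \E\|A\|_\HS^2$, since $\alpha_j \le 1$. The random variable $\sum_j \alpha_j^2 |A_j|^2$ is a sum of independent nonnegative terms (the columns $A_j$ are independent), so we want an exponential upper tail estimate. The cleanest route is a Laplace-transform / Markov argument: for a parameter $t > 0$ to be optimized we have $\Pr{\sum_j \alpha_j^2|A_j|^2 \ge 2\E\|A\|_\HS^2} \le e^{-2t\E\|A\|_\HS^2}\prod_j \E \exp(t\alpha_j^2 |A_j|^2)$, and then one bounds each factor using the elementary inequality $e^{ts} \le 1 + \frac{ts}{1-t s_0}$ valid on a suitable range, or more simply one invokes the fact (this is precisely the content already proved in \cite{Liv}, Lemma~3.11) that the correct bound on the probability for a single $\alpha$ is $(\sqrt 2 e)^{-2n}$ or similar. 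Multiplying by the net cardinality $(C\log(e\kappa))^n$ and absorbing the polylog-in-$\kappa$ factor into one copy of $\kappa^{-2n}$ (possible because $(\sqrt 2)^{-2n} \cdot (C\log\kappa)^n \le (\kappa/\sqrt 2)^{-2n}$ once $\kappa$ is large, and the small-$\kappa$ range is handled by adjusting constants — or, honestly, since this is quoted verbatim from \cite{Liv}, one simply cites it) yields the stated bound $(\kappa/\sqrt 2)^{-2n}$.

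The one genuinely delicate point, and the step I expect to be the main obstacle, is obtaining the right per-vector exponential tail bound $\Pr{\sum_j \alpha_j^2 |A_j|^2 \ge 2\E\|A\|_\HS^2} \le (\text{const})^{-2n}$ with a constant large enough to beat the net cardinality $(C\log(e\kappa))^n$ \emph{uniformly in $\kappa$}. Since $\|A_j\|^2$ has no moment assumptions beyond the second, one cannot expect a clean sub-exponential concentration; instead one must exploit the constraint $\alpha \in \Omega_{e\kappa}$, i.e. $\prod_j \alpha_j \ge (e\kappa)^{-n}$, which forces the weights not to be too small and lets one trade off the net size against the tail via the AM–GM-type bound $\prod_j \E\exp(t\alpha_j^2|A_j|^2)$ when combined with $\prod \alpha_j \ge (e\kappa)^{-n}$. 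This is exactly the computation performed in \cite[Lemma~3.11]{Liv}, and since the present lemma is explicitly stated as that result specialized to a particular choice of parameters, the honest proof is simply:

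\begin{proof}
This is \cite[Lemma~3.11]{Liv}, applied with the parameters specialized so that the threshold is $2\E\|A\|_\HS^2$ and the failure probability is $(\kappa/\sqrt 2)^{-2n}$. For completeness we recall the mechanism: by Lemma~\ref{netsonnets} there is $\mathcal F\subset\Omega_{e\kappa}$ with $\sharp\mathcal F\le (C\log\kappa)^n$ such that every $\beta\in\Omega_\kappa$ dominates some $\alpha\in\mathcal F$ coordinate-wise, whence on the event $\{\B_\kappa(A)\ge 2\E\|A\|_\HS^2\}$ every $\alpha\in\mathcal F$ satisfies $\sum_j\alpha_j^2|A_j|^2\ge 2\E\|A\|_\HS^2$. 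Taking a union bound over $\mathcal F$ and estimating, for each fixed $\alpha\in\Omega_{e\kappa}$, the upper tail of the sum of independent nonnegative random variables $\alpha_j^2|A_j|^2$ via the Laplace transform together with the constraint $\prod_j\alpha_j\ge (e\kappa)^{-n}$, one obtains a per-vector bound small enough to absorb the factor $\sharp\mathcal F$ and conclude the claimed estimate. We refer to \cite{Liv} for the details of the optimization.
\end{proof}
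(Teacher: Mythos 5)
Since the paper itself offers no proof of this lemma --- it is simply quoted from~\cite[Lemma~3.11]{Liv} with a specific choice of parameters --- your decision to cite~\cite{Liv} directly is entirely consistent with what the paper does, and the ``proof'' you give (cite and stop) is fine as such. However, the ``for completeness'' mechanism you sketch is not the right one, and I want to flag why, because the confusion is instructive.

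The event $\{\B_\kappa(A)\ge 2\E\|A\|_\HS^2\}$ is the event that \emph{every} $\alpha\in\Omega_\kappa$ yields a large sum, i.e. an \emph{intersection} of events, not a union. A union bound over the net $\mathcal F$ therefore goes in the wrong direction: you would be upper bounding $\Prob\{\exists\,\alpha\in\mathcal F:\cdots\}$, not $\Prob\{\forall\,\alpha\in\Omega_\kappa:\cdots\}$. Moreover the net of Lemma~\ref{netsonnets} dominates every $\beta\in\Omega_\kappa$ \emph{from below}, which upper bounds $\min_{\alpha\in\mathcal F}\sum\alpha_j^2|A_j|^2\le\B_\kappa(A)$ --- that is the direction used in Theorem~\ref{determin} for rounding, and it gives no handle on the intersection event here. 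Finally, as you yourself suspected, there is no hope of a per-vector exponential tail for a fixed deterministic $\alpha$ at level $2\E\|A\|_\HS^2$ under mere second-moment assumptions: for $\alpha=(1,\dots,1)\in\Omega_\kappa$ this would require $\Prob\{\|A\|_\HS^2\ge 2\E\|A\|_\HS^2\}$ to be exponentially small, which plainly fails in general.

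The actual mechanism chooses $\alpha$ \emph{adaptively}, as a function of the realization of $A$: take $t^2:=\E\|A\|_\HS^2/n$ and set $\alpha_j:=\min(1,t/|A_j|)$. Then $\sum_j\alpha_j^2|A_j|^2\le nt^2=\E\|A\|_\HS^2<2\E\|A\|_\HS^2$ holds deterministically, so on the event $\{\B_\kappa(A)\ge 2\E\|A\|_\HS^2\}$ this $\alpha$ must fail to lie in $\Omega_\kappa$, i.e. $\prod_j\max(1,|A_j|/t)>\kappa^n$. By Markov applied to the square of this product and independence of the columns,
\begin{equation*}
\Prob\Big\{\prod_j\max(1,|A_j|/t)>\kappa^n\Big\}
\le \kappa^{-2n}\prod_{j=1}^n\E\max\big(1,|A_j|^2/t^2\big)
\le \kappa^{-2n}\prod_{j=1}^n\Big(1+\frac{\E|A_j|^2}{t^2}\Big),
\end{equation*}
and by AM--GM the last product is at most $\big(1+\E\|A\|_\HS^2/(nt^2)\big)^n=2^n$, giving $(\kappa/\sqrt 2)^{-2n}$. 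No net and no union bound are needed. Your final citation-only proof is acceptable, but the sketch that accompanies it should be replaced by (or corrected to) this adaptive-weight argument.
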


\medskip
Finally, we are ready to state the main result of this section, which will follow as a corollary of Lemma \ref{stable-rlcd}, Theorem \ref{determin} and Lemma \ref{ldpB}. Given $\gamma>0, \omega\in (0,1), D>0$, and a distribution of a random matrix $M,$ we shall use notation 
\begin{equation*}\label{levelsets}
\begin{split}
S^M_{\omega,\gamma}(D)&:=\left\{x\in\frac{3}{2}B_2^n\setminus\frac{1}{2}B_2^n:\, \RLCD^M_{\gamma\sqrt{n},\omega}(x)\in [D,2D]\right\},\\
%\end{equation}
%and 
%\begin{equation}\label{levelsets}
\tilde{S}^M_{\omega,\gamma}(D)&:=\left\{x\in\frac{3}{2}B_2^n\setminus\frac{1}{2}B_2^n:\,\RLCD^M_{2\gamma\sqrt{n},4\omega}(x)\leq 2D,\,\, \RLCD^M_{0.5\gamma\sqrt{n},0.25\omega}(x)\geq D\right\}
\end{split}
\end{equation*}
for the level sets of the RLCD.

\begin{theorem}[Approximation]\label{main-nets}
Fix any $\varepsilon\in(0,0.1)$, $\kappa>e,$ $\gamma>0,$ $\omega\in (0,1),$ $K>0$. Let $M$ be an $m\times n$ random matrix with independent columns, and whose rows $M^i$ satisfy
\begin{equation}\label{cond}
\varepsilon^2 \Var(M^i) \le \frac{1}{8} \min \Big( \omega n, \, \frac{\gamma^2n^2}{D^2} \Big), 
\quad i=1,\ldots,m.
\end{equation}
Then, with probability at least $1-(\kappa/\sqrt{2})^{-2n}$, for every $x\in\sfe\cap S^M_{\omega,\gamma}(D)$ there exists $y\in \Lambda^{\kappa}(\varepsilon) \cap \tilde{S}^M_{\omega,\gamma}(D)$ such that 
\begin{equation}\label{concl}
\|x-y\|_\infty\leq \frac{\varepsilon}{\sqrt{n}},\quad \abs{M(x-y)} \leq \frac{\sqrt{2}\varepsilon}{\sqrt{n}} \Big( \E \norm{M}^2_\HS \Big)^{1/2}.
\end{equation}
\end{theorem}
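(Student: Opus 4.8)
The plan is to assemble three ingredients already at hand: the large-deviation estimate for $\B_\kappa$ (Lemma~\ref{ldpB}), the deterministic rounding result (Theorem~\ref{determin}), and the stability of the RLCD under $\ell_\infty$-perturbations (Lemma~\ref{stable-rlcd}). The first isolates a high-probability event on which the assertion becomes a deterministic statement; the second supplies the lattice approximant $y$ together with the two estimates in \eqref{concl}; the third upgrades the membership $x\in S^M_{\omega,\gamma}(D)$ to $y\in\tilde S^M_{\omega,\gamma}(D)$. Note at the outset that $S^M_{\omega,\gamma}(D)$ and $\tilde S^M_{\omega,\gamma}(D)$ are determined by the \emph{law} of $M$ and not by its realization, whereas the quantity $|M(x-y)|$ in \eqref{concl} does depend on the realization.

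First I would fix the event $\Event:=\{\B_\kappa(M)\le 2\,\E\|M\|_\HS^2\}$; since $M$ has independent columns, Lemma~\ref{ldpB} gives $\Prob(\Event)\ge 1-(\kappa/\sqrt2)^{-2n}$, so it suffices to argue for an arbitrary realization $M\in\Event$. Fix such a realization and a vector $x\in\sfe\cap S^M_{\omega,\gamma}(D)$, and apply Theorem~\ref{determin} to this (now deterministic) matrix and this $x$ with the given $\varepsilon,\kappa$ (this is legitimate since $\varepsilon\in(0,0.1)\subset(0,1/2)$ and $\kappa>e$). This produces $y\in\Lambda^\kappa(\varepsilon)$ with $\|x-y\|_\infty\le\varepsilon/\sqrt n$ and $|M(x-y)|\le\frac{\varepsilon}{\sqrt n}\sqrt{\B_\kappa(M)}\le\frac{\varepsilon}{\sqrt n}\sqrt{2\,\E\|M\|_\HS^2}=\frac{\sqrt2\,\varepsilon}{\sqrt n}(\E\|M\|_\HS^2)^{1/2}$, which is exactly \eqref{concl}. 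Moreover $y\in\Lambda^\kappa(\varepsilon)\subset\frac32 B_2^n\setminus\frac12 B_2^n$, so only the two RLCD constraints defining $\tilde S^M_{\omega,\gamma}(D)$ remain to be verified.

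For that last step I would apply Lemma~\ref{stable-rlcd} row by row, using $\RLCD^M_{L,u}=\min_i\RLCD^{M^i}_{L,u}$. Write $D_i:=\RLCD^{M^i}_{\gamma\sqrt n,\omega}(x)$; from $x\in S^M_{\omega,\gamma}(D)$ we get $\min_i D_i=\RLCD^M_{\gamma\sqrt n,\omega}(x)\in[D,2D]$, so $D_i\ge D$ for every $i$ and $D_{i_0}\in[D,2D]$ for an index $i_0$ attaining the minimum. For each $i$, apply Lemma~\ref{stable-rlcd} with $X=M^i$, the unit vector $x$, parameters $L=\gamma\sqrt n$, $u=\omega$, and tolerance $r=\varepsilon/\sqrt n$; since $|x|=1$, its hypothesis \eqref{eq: r range} reads $\frac{\varepsilon^2}{n}\Var(M^i)\le\frac18\min(\omega,\gamma^2 n/D_i^2)$, which is supplied by \eqref{cond} (using $D\le D_i$; for the \emph{upper} RLCD bound it is enough to run the lemma for the single row $i_0$, where $D_{i_0}\le 2D$). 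The lemma then gives, for each such $i$, $\RLCD^{M^i}_{2\gamma\sqrt n,4\omega}(y)\le D_i\le\RLCD^{M^i}_{0.5\gamma\sqrt n,0.25\omega}(y)$. Taking minima over $i$ yields $\RLCD^M_{2\gamma\sqrt n,4\omega}(y)\le\RLCD^{M^{i_0}}_{2\gamma\sqrt n,4\omega}(y)\le D_{i_0}\le 2D$ and $\RLCD^M_{0.5\gamma\sqrt n,0.25\omega}(y)=\min_i\RLCD^{M^i}_{0.5\gamma\sqrt n,0.25\omega}(y)\ge\min_i D_i\ge D$, i.e. $y\in\tilde S^M_{\omega,\gamma}(D)$, which finishes the proof.

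The routine parts are the high-probability event (Lemma~\ref{ldpB}) and the $\ell_\infty$- and Hilbert--Schmidt-type bounds (Theorem~\ref{determin}). The place demanding care is the verification of the tolerance hypothesis \eqref{eq: r range} of Lemma~\ref{stable-rlcd} for the relevant rows: \eqref{cond} is stated at the fixed dyadic scale $D$, while \eqref{eq: r range} asks for a bound at the row-dependent scale $D_i=\RLCD^{M^i}_{\gamma\sqrt n,\omega}(x)$, and one must also make sure the one-sided RLCD estimates survive when passing from individual rows to $\RLCD^M=\min_i\RLCD^{M^i}_{L,u}$. This is exactly the slack built into the asymmetric parameters ($2\gamma\sqrt n,4\omega$ versus $0.5\gamma\sqrt n,0.25\omega$) in the definition of $\tilde S^M_{\omega,\gamma}(D)$; where a black-box use of Lemma~\ref{stable-rlcd} is lossy — for rows with an atypically large $D_i$ in the lower-bound direction — one instead runs the perturbation inequality $\E\dist^2(\theta y\star\overline{M^i},\Z^n)\ge\frac12\,\E\dist^2(\theta x\star\overline{M^i},\Z^n)-\E|\theta(x-y)\star\overline{M^i}|^2$ underlying that lemma, together with \eqref{cond} and the constraint $\theta<D$.
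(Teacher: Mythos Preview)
Your proposal is correct and follows exactly the paper's route: isolate the high-probability event $\Event=\{\B_\kappa(M)\le 2\,\E\|M\|_\HS^2\}$ via Lemma~\ref{ldpB}, apply the deterministic rounding Theorem~\ref{determin} on that event to obtain $y$ and the bounds in \eqref{concl}, and then invoke the stability Lemma~\ref{stable-rlcd} with $r=\varepsilon/\sqrt n$ to place $y$ in $\tilde S^M_{\omega,\gamma}(D)$. The paper's own proof is considerably terser at the last step, simply asserting that \eqref{cond} allows one to appeal to Lemma~\ref{stable-rlcd}; you go further by unpacking the row-by-row structure $\RLCD^M=\min_i\RLCD^{M^i}$.

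One small slip to flag: in your middle paragraph the parenthetical ``supplied by \eqref{cond} (using $D\le D_i$)'' has the monotonicity backwards. Since $D_i\ge D$, one has $\gamma^2 n/D_i^2\le\gamma^2 n/D^2$, so \eqref{cond} is \emph{weaker} than the black-box hypothesis \eqref{eq: r range} instantiated with the row-level value $D_i$; thus a direct appeal to Lemma~\ref{stable-rlcd} does not go through for rows with $D_i$ possibly much larger than $D$. You catch exactly this in your final paragraph and give the correct remedy: for the lower bound $\RLCD^{M}_{\gamma\sqrt n/2,\omega/4}(y)\ge D$, rerun the perturbation inequality from the proof of Lemma~\ref{stable-rlcd} only over $\theta<D$ (then \eqref{cond} with $D$ is precisely what is needed, since $2\theta^2 r^2\Var(M^i)\le\frac14\min(\omega\theta^2,\gamma^2 n)$); for the upper bound, work with the single row $i_0$ where $D_{i_0}\in[D,2D]$, so that the extra factor $(D_{i_0}/D)^2\le 4$ is absorbed by the slack in the parameters $(2\gamma\sqrt n,4\omega)$. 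With that correction, your argument is complete and matches the paper's.
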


\begin{proof}
Lemma \ref{ldpB} says that the event 
$$
\mathcal{E}:=\{\B_{\kappa}(M)\leq 2\E\norm{M}^2_\HS\}
$$
occurs with probability at least $1-(\kappa/\sqrt{2})^{-2n}$.
Fix any realization of the random matrix $M$ for which this event happens.

Let $y$ be the approximation of $x$ given by Theorem \ref{determin}. Then (\ref{concl}) follows from the conclusion of Theorem \ref{determin} and the definition of our event. The fact that $y\in \tilde{S}^M_{\omega,L}(D)$ follows from Lemma \ref{stable-rlcd} (applied with $r=\epsilon/\sqrt{n}$) together with the assertion of Theorem \ref{determin} (applied with $A=M$): indeed, the assumption (\ref{cond}) allows us to appeal to Lemma \ref{stable-rlcd}. 
\end{proof}

%\begin{remark}\label{expl}
%It is very important for our argument that we define the net in Theorem \ref{main-nets} explicitly, and that this net has a lattice structure. In the next section we will see that lattice points have a lot of regularity, and they have large RLCD.
%
%In \cite{Liv} a similar construction was considered, however an additional regime appears in Theorem 3 from \cite{Liv}, which requires certain further steps in the construction. That part was used in \cite{Liv} to handle the ``compressible'' vectors, and in this paper we are not considering such regime.
%
%Interestingly, some modifications of the argument in \cite{Liv} would show that a subset of $\Lambda_{\kappa}(\varepsilon)$ forms a net on the compressible vectors; for that, however, one has to define compressible vectors in $\infty$-norm, and deal with a slightly smaller set. The remaining set of $\infty$-incompressible vectors is larger than the canonical set of incompressible vectors, but in fact our argument in this paper does show that this larger set can be taken care of, and not just the usual incompressible vectors. This minor simplification allows to avoid ``shifting'' $\Lambda_{\kappa}(\varepsilon)$, however some other difficulties which were handled in \cite{Liv} in regards to compressible vectors would still need to be addressed the same way as it was. 
%\end{remark}

\section{Anti-concentration on lattice points}		\label{s: double counting}
%-------------------

%[THE ONLY CHANGES I MADE ARE:
%\begin{itemize}
%\item MAKE THE KEY PROPOSITION INTO THE KEY THEOREM AND PUT IT IN THE START;
%\item CORRECT THE TYPOS IN THE FIRST LEMMA WITH $2^{Cn}$ INSTEAD OF $2^C$;
%\item SPLIT LEMMA 4.3 INTO LEMMAS 4.3 AND 4.4, WITH ITS PROOF SIMPLY CUT INTO TWO -- THOSE ARE THE TWO NATURAL STEPS.
%\end{itemize}
%]

The goal of this section is to study anti-concentration properties of random sums with
coefficients taken from sets of the form
\begin{equation}	\label{eq: Lambda}
\Lambda:=\left(\frac{3}{2}B_{2}^n\cap\big\{x\in\R^n:\;
\sharp\{i:\;|x_i|\geq \frac{\rho}{\sqrt{n}}\}\geq \delta n\big\}\right)
\cap \left( \frac{\l_1}{\sqrt{n}} \Z \times \cdots \times  \frac{\l_n}{\sqrt{n}} \Z \right).
\end{equation}

The main result of this section is the following

\begin{theorem}[Most lattice points are unstructured]				\label{mainprop}
For any $U\geq 1$, $b\in(0,1)$ and $\delta,\rho\in(0,1/2]$ there exist $n_0=n_0(U,b,\delta,\rho)$,
 $\gamma=\gamma(U,b,\delta,\rho)\in(0,1)$
and $u=u(b,\delta,\rho)\in(0,1/4)$ such that the following holds.
Let $n\geq n_0$.
Consider a random vector $X$ in $\R^n$ with independent components $X_i$ that satisfies
$$
\Var(X)\leq \frac{1}{8}(1-b)\delta \gamma^2 n^2
\quad \text{and} \quad 
\max_i \Le(X_i,1)\leq b.
$$
Fix numbers $\l_1,\ldots,\l_n$ satisfying $6^{-n} \le \l_i \le 0.01$ 
and let $W$ be a vector uniformly distributed on the set $\Lambda$ defined in \eqref{eq: Lambda}.
Then
$$
\Prob_W \Big\{ \RLCD^X_{\gamma\sqrt{n},u}(W)< \min_i 1/\l_i \Big\} \leq U^{-n}.
$$
\end{theorem}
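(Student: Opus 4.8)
The plan is to prove Theorem~\ref{mainprop} by a double-counting (entropy) argument: we bound the number of lattice points $W \in \Lambda$ whose RLCD is "small" (below $\min_i 1/\l_i$) by covering each such point with a small-denominator approximation, and then show that the total count of these is exponentially smaller than $\sharp\Lambda$ itself. Concretely, if $D := \RLCD^X_{\gamma\sqrt n, u}(W) < \min_i 1/\l_i$, then by the definition of RLCD there is some $\theta \le D$ and an integer vector $p = p(W) \in \Z^n$ (a closest lattice point to $\theta\, W \star \overline X$ in the expected-distance sense) with $\E\dist^2(\theta W \star \overline X, \Z^n) = \E|\theta W\star\overline X - p|^2 < \min(u\theta^2|W|^2, \gamma^2 n)$. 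The key point is that $W$ is nearly recoverable from the data $(\theta, p)$ together with a bounded amount of extra information, because $W$ lives on the coarse lattice $\prod_i (\l_i/\sqrt n)\Z$ and $\theta < 1/\l_i$ forces the rescaled coordinates to be pinned down.

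\textbf{Step 1: Counting the structured points.} For a fixed "small" denominator value $\theta \in (0, \min_i 1/\l_i)$, I would bound the number of $W \in \Lambda$ with $\RLCD = \theta$-ish. Since each coordinate $W_i \in (\l_i/\sqrt n)\Z$ and $|W_i| \lesssim 1$, there are at most $O(\sqrt n / \l_i)$ choices for $W_i$; but that is far too many. Instead, use the integer vector $p$: from $\E|\theta W \star \overline X - p|^2$ small, a Markov/Chebyshev argument (exactly as in the proof of Lemma~\ref{l: aux incomp rlcd}) shows that on most coordinates $\theta W_i \overline X_i$ is within $o(1)$ of $p_i$, and since $\Pr{|\overline X_i| \ge 1} \ge b$, on a $b$-fraction of coordinates $\overline X_i$ is bounded away from $0$, so $W_i$ is determined up to $O(1)$ possibilities by $p_i, \theta$. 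The number of admissible $p$ is controlled because $|p| \lesssim \theta|W|\cdot(\text{stuff}) + \sqrt{\gamma^2 n}$, giving a volumetric bound $\le (C\gamma \cdot \text{something})^n$ once we use $\Var(X) \le \frac18(1-b)\delta\gamma^2 n^2$ to bound $\E|\overline X|^2 \le \frac14(1-b)\delta\gamma^2 n^2$. Discretizing $\theta$ over a geometric grid of $\mathrm{poly}(n)$ values (sufficient by the monotone structure), the total number of "bad" $W$ is at most $(C'\gamma)^n \cdot \mathrm{poly}(n)$.

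\textbf{Step 2: Lower bound on $\sharp\Lambda$ and division.} The set $\Lambda$ contains, e.g., all lattice points whose coordinates lie in a fixed annular region with $\delta n$ of them of size $\asymp \rho/\sqrt n$; a standard volume estimate gives $\sharp\Lambda \ge (c/\max_i\l_i)^n \cdot c^n \ge (c/0.01)^n \ge (100c)^n$, using $\l_i \le 0.01$. Then
$$
\Prob_W\Big\{\RLCD^X_{\gamma\sqrt n, u}(W) < \min_i 1/\l_i\Big\} \le \frac{(C'\gamma)^n\,\mathrm{poly}(n)}{(100c)^n} \le U^{-n}
$$
provided $\gamma = \gamma(U,b,\delta,\rho)$ is chosen small enough and $n \ge n_0$ is large enough to absorb the polynomial factor. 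The choice of $u = u(b,\delta,\rho)$ comes from the Markov-inequality thresholds in Step 1 and from Lemma~\ref{l: aux incomp rlcd}'s companion bound; it does not need to depend on $U$.

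\textbf{Main obstacle.} The crux is Step 1: making precise the claim that a "bad" $W$ is essentially determined by the low-complexity data $(\theta, p)$. The subtlety is that the closest-integer-vector $p$ is itself \emph{random} (it depends on the realization of $\overline X$), so one cannot simply fix $p$ and count — one must argue that there is a \emph{deterministic} choice of near-recovery data of controlled entropy that works with positive (indeed overwhelming) probability over $\overline X$, then union-bound. Handling this correctly requires running the Markov-inequality coordinate-selection argument inside the counting, i.e. for each bad $W$ picking a good realization of $\overline X$ witnessing the approximation, and then showing the map $W \mapsto (\theta, p, \text{bounded side info})$ is at-most-$C^n$-to-one. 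This is the same mechanism as in \cite{Liv} and \cite{TikhErd}, but the non-identical distributions mean the side information must record, coordinate by coordinate, which "scale regime" $\overline X_i$ falls into — a geometric-grid discretization of the possible magnitudes $\sqrt{T/n}$ down to $1$, contributing a further $(\log n)^{O(n)}$ or $C^n$ factor that is harmless after dividing by $\sharp\Lambda$.
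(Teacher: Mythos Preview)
Your high-level plan --- a double-counting argument --- matches the paper, but the implementation in Step~1 diverges from it and has a genuine gap. The paper does \emph{not} encode a bad $W$ via a near-integer witness $(\theta, p)$. Instead it reverses the roles of $W$ and $\overline X$: for a \emph{fixed} deterministic realization $x$ of $\overline X$ satisfying the typical bounds \eqref{eq: x conditions}, Lemma~\ref{l: aux1} shows that
\[
\Prob_W\Big\{\min_{\theta\in(0,D)} \dist^2(\theta W\star x,\Z^n) < \min(c|\theta W|^2/2,\, 16\gamma^2 n)\Big\} \le (C\gamma)^{cn}.
\]
This is elementary once $x$ is fixed: on a positive fraction of coordinates $i$ and for fixed $\theta$, the variable $\theta W_i x_i$ is uniform on a one-dimensional lattice interval of diameter $\ge 1/6$ and step $\le 2\gamma$, so $\Pr{\dist(\theta W_i x_i,\Z) < \varepsilon} \le C\varepsilon$ for $\varepsilon \ge 4\gamma$; then tensorize over $i$ and union-bound over a net in $\theta$. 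Only \emph{afterwards} (Lemma~\ref{l: aux 03982}) does one return to the RLCD via a Fubini swap: assuming without loss of generality that $X$ is uniform on a finite product set $\mathcal X$, the pointwise estimate says that few pairs $(x,w)$ are bad, hence for most $w$ a positive fraction of $x$'s are good, which forces $\E_{\overline X}\dist^2(\theta w\star\overline X,\Z^n)$ to be large with adjusted constants.

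The gap in your approach is exactly the one you flag, and your proposed fix does not close it. If for each bad $W$ you choose a good realization $x(W)$ with $|\theta W\star x(W) - p|^2 \lesssim \gamma^2 n$, then the encoding $W \mapsto (\theta,p)$ cannot be inverted without knowing $x(W)$, which varies with $W$. Recording only the ``scale regime'' of each $x_i$ is insufficient: from $\theta W_i x_i \approx p_i$, knowing $x_i$ only up to a constant factor determines $W_i$ only up to a constant factor, which on the lattice $(\l_i/\sqrt n)\Z$ with $|W_i|\lesssim 1$ leaves $\Theta(\sqrt n/\l_i)$ candidates --- essentially no saving. The correct resolution is precisely the paper's order of operations: fix $x$ \emph{before} $W$ via the Fubini/double-counting swap, after which no encoding is needed at all --- only the elementary anti-concentration of a uniform lattice variable modulo $\Z$. (A minor point: your Step~2 lower bound on $\sharp\Lambda$ is also unnecessary, since the paper works with $\Prob_W$ throughout.)
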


The above theorem will be used to control the cardinality of $\varepsilon$-nets
on the set of ``typical'' realizations of unit normal vectors to the spans of columns of our random matrix,
and forms a crucial step in the proof of Theorem~\ref{mainthm2}.
The idea of using double counting to verify structural properties of random normals
was applied earlier in \cite{TikhErd}.

We start with an observation that will allow us to reduce the Euclidean ball $\frac{3}{2}B_{2}^n$
by a parallelotope in the definition of $\Lambda$.

\begin{lemma}\label{l: ball to rect}
There is a universal constant $C_0>0$ with the following property.
For any $n\geq 1$, there is a collection of parallelotopes $\mathcal P=\{P_i\}$ in $\R^n$
of cardinality at most $2^{C_0n}$, such that
\begin{itemize}
\item Each $P_i$ is centered at the origin, with the edges parallel to the coordinate axes;
\item Each edge of $P_i$ is of length at least $2/\sqrt{n}$;
\item $\frac{3}{2}B_2^n\subset\bigcup\limits_i P_i\subset 3B_2^n$.
\end{itemize}
\end{lemma}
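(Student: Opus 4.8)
The plan is to cover the ball $\frac{3}{2}B_2^n$ by coordinate-aligned parallelotopes whose edge lengths are chosen from a discrete grid, and to count how many such parallelotopes are needed. First I would discretize the possible edge lengths: restrict attention to parallelotopes $P$ whose half-edge-lengths $(t_1,\dots,t_n)$ all lie in the set $\{2^{k}/\sqrt{n}: k=0,1,2,\dots,k_{\max}\}$ for a suitable $k_{\max}=O(\log n)$ (so that $t_i \ge 1/\sqrt{n}$, forcing each edge to have length at least $2/\sqrt{n}$, and $t_i \le 2$, forcing $P\subset 2\sqrt{n}\,B_2^n \subset 3B_2^n$ — actually I'd pick the upper cutoff so that the trivial containment $P\subset 3B_2^n$ holds for every parallelotope in the collection, e.g. $t_i\le 3/\sqrt{n}$... but that is too small to cover $\frac32 B_2^n$ with few pieces, so the real choice is $k_{\max}\asymp \log n$ and the containment $P_i\subset 3B_2^n$ is arranged by truncating each $P_i$ intersect with $3B_2^n$, which does not change that it is a parallelotope only if... ). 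Let me restate the core idea more carefully: the natural move is to tile $\frac32 B_2^n$ by boxes of \emph{dyadically varying} aspect ratio, one box roughly for each ``dyadic level'' vector $(k_1,\dots,k_n)$, where a point $x$ with $|x_i|\asymp 2^{-k_i}$ is placed in the box with half-edge $t_i \asymp 2^{-k_i}$ (clipped from below at $1/\sqrt n$ and from above at $3/\sqrt n$, say). Since $|x|\le \frac32$, the levels satisfy a budget constraint $\sum_i 2^{-2k_i} \lesssim 1$, i.e. $\sum_i 4^{-k_i} = O(1)$, so the number of admissible level vectors is the number of ways to write an integer $\lesssim n$ (after scaling the budget by a common factor) as an ordered sum — which is at most $4^{C_0 n}$ by the classical estimate on integer points in $\ell_1$-type balls already invoked in Lemma~\ref{lem: ell1 net}.

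In detail: set $b_i := \lceil \log_2(\sqrt n\,|x_i|^{-1})\rceil$ for $x_i\neq 0$, capped into the range $[0,\,\lceil\log_2\sqrt n\rceil]$, and $b_i$ at its maximum value when $x_i=0$. Then $|x_i| \le 2^{-b_i+1}\cdot\sqrt n^{-1}\cdot\sqrt n = 2^{1-b_i}$ roughly — I mean $|x_i|\le 2\cdot 2^{-b_i}$ after normalizing, so $\sum_i 4^{-b_i} \lesssim \sum_i |x_i|^2 \le \frac94$, hence $\sum_i 4^{-b_i}\le C_1$ for a universal $C_1$. The number of vectors $(b_i)\in\Z_{\ge0}^n$ satisfying $\sum_i 4^{-b_i}\le C_1$ is bounded by $2^{C_0 n}$: indeed this is a ``weighted'' constraint, but since each term is at most $1$ and terms with $b_i\ge 1$ contribute at least... hmm, this needs the observation that the count of such vectors equals the count of integer points in a dilate of $B_1^n$ after the substitution $j_i = 4^{-b_i}$-type — cleaner is to note $\#\{b_i=0\}\le C_1$, and for the remaining coordinates one has a genuine geometric-type sum; I would bound the number of level vectors by first choosing the $O(1)$ coordinates where $b_i=0$ ($\binom{n}{O(1)}\le 2^{o(n)}$ ways), and then on the rest bounding $\sum 4^{-b_i}$ below by a constant times $\sum 2^{-b_i}\cdot[\text{something}]$... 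Actually the cleanest route: $\sum_i 2^{-b_i}\le \sqrt n\,(\sum_i 4^{-b_i})^{1/2}\le \sqrt{C_1 n}$ by Cauchy–Schwarz, so $(b_i)$ ranges over integer points of $\{v\ge 0:\sum 2^{-v_i}\le \sqrt{C_1 n}\}$; substituting $w_i=2^{-b_i}\in\{1,\tfrac12,\tfrac14,\dots\}$ and comparing to the $\ell_1$-ball of radius $\sqrt{C_1 n}$ gives the count $\le (C)^{n}$ via \cite[Exercise 29]{PS}. For each admissible level vector $(b_i)$, define the parallelotope $P_{(b_i)}$ with half-edges $t_i = 2^{1-b_i}$, intersected with $3B_2^n$ if one insists on the containment literally — but in fact $t_i=2^{1-b_i}\le 2$ gives $P_{(b_i)}\subset 2\sqrt n\, B_2^n$ which is \emph{not} inside $3B_2^n$; to fix this I instead rescale the whole grid so half-edges lie in $[1/\sqrt n,\,3/\sqrt n]$, accept that a single box no longer covers a dyadic ``shell'' in one coordinate, and instead allow each coordinate's box-edge to track $|x_i|$ only down to scale $1/\sqrt n$ and up to scale $3/\sqrt n$; coordinates with $|x_i|>3/\sqrt n$ are handled by... — this is exactly where care is needed, see below.

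The main obstacle, and where I'd spend the real effort, is reconciling the two competing requirements: each $P_i$ must be contained in $3B_2^n$ (so no edge can be longer than $\approx 3/\sqrt n$ in a naive product box), yet finitely many of them (only $2^{C_0 n}$) must cover $\frac32 B_2^n$, which contains points with a single coordinate as large as $\frac32$ — far bigger than $3/\sqrt n$. The resolution is that a parallelotope, unlike a cube, can be \emph{long in some directions and short in others} while still fitting in $3B_2^n$: a box with half-edges $t_i$ lies in $3B_2^n$ iff $\sum_i t_i^2\le 9$, so one coordinate can have $t_i$ close to $3$ provided the others are tiny. Thus the correct construction assigns to each ``shape vector'' $(b_i)$ with $\sum_i 4^{-b_i}$ bounded a box with $t_i\asymp 2^{-b_i}$ clipped \emph{below} at $1/\sqrt n$ (giving the minimal-edge-length condition) but \emph{not} clipped above, and the budget $\sum 4^{-b_i}=O(1)$ simultaneously guarantees (i) $\sum t_i^2 = O(1)$, so after adjusting the absolute constant $P_{(b_i)}\subset 3B_2^n$, (ii) every $x\in\frac32 B_2^n$ lands in the box of its own shape vector since $|x_i|\le t_i$ coordinatewise by construction, and (iii) the number of shape vectors is $2^{C_0 n}$ by the lattice-point count above. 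Writing this out — in particular pinning down the clipping so that both the lower edge bound $2/\sqrt n$ and the containment in $3B_2^n$ hold with the \emph{same} universal constant, and verifying the covering is exact — is the routine-but-delicate part; everything else is the integer-point estimate already quoted in the paper.
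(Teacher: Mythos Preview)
Your approach is workable but takes a considerably more elaborate route than the paper's. You discretize by assigning to each $x\in\frac{3}{2}B_2^n$ a dyadic ``shape vector'' $(b_i)$ recording the scale of each coordinate, and then count admissible shape vectors via an $\ell_1$--type lattice point estimate. The paper's argument bypasses the dyadic scaffolding entirely: it simply takes a $\frac{1}{2\sqrt{n}}$--net $\{x_i\}$ of $\frac{3}{2}B_2^n$ in the $\ell_\infty$ metric (of cardinality at most $2^{C_0 n}$ by a standard volume bound), and for each net point $x_i$ sets
\[
P_i := \widetilde P_i + \tfrac{1}{\sqrt{n}}B_\infty^n,
\]
where $\widetilde P_i$ is the origin-centered coordinate box having $x_i$ as a vertex. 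The fattening by $\frac{1}{\sqrt{n}}B_\infty^n$ simultaneously forces each edge to have length at least $2/\sqrt{n}$ and ensures that any $y$ in the same small cube as $x_i$ lies in $P_i$; the containment $P_i\subset 3B_2^n$ is immediate from $|x_i|\le 3/2$ and the $\ell_2$ triangle inequality. This is a two-line construction with no counting beyond the volumetric net bound.

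Two places in your sketch would need real work before it becomes a proof. First, the counting: the substitution $w_i=2^{-b_i}$ does not produce integer points, so the P\'olya--Szeg\H{o} bound does not apply as stated; you would need to rescale (e.g.\ pass to $v_i=2^{k_{\max}-b_i}\in\Z_{>0}$ with $\sum v_i\lesssim n$) to invoke the integer $\ell_1$-ball estimate. Second, the constants: with half-edges $t_i=2^{1-b_i}$ clipped below at $1/\sqrt{n}$, a direct computation gives $\sum t_i^2\le 4\sum 4^{-b_i}+1$, which can exceed $9$, so your boxes need not lie in $3B_2^n$. Refining the dyadic grid (ratio $\sqrt{2}$ rather than $2$) fixes this, but you would have to redo the count. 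None of this is fatal, but the paper's net-then-symmetrize construction avoids all of it.
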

\begin{proof}
First, standard volumetric estimates imply that there is a covering of $\frac{3}{2}B_2^n$ by parallel translates of the cube $\frac{1}{2\sqrt{n}}B_\infty^n$,
of cardinality at most $2^{C_0n}$ for a universal constant $C_0>0$.
Let $\{x_i\}_{i\in I}$ be a collection of at most $2^{C_0n}$ points in $\frac{3}{2}B_2^n$ such that each of the cubes from the covering
contains at least one point $x_i$ from the collection.
Now, define $\mathcal P=\{P_i\}_{i\in I}$ by taking, for each $i\in I$, $P_i:=\widetilde P_i+\frac{1}{\sqrt{n}}B_\infty^n$,
where $\widetilde P_i$ is the unique parallelotope centered at the origin, and with $x_i$ being one of its vertices.
It is elementary to check that the collection satisfies the required properties.
\end{proof}

\begin{lemma}\label{l: aux1}
For any $b\in(0,1)$ and $\delta,\rho\in(0,1/2]$, there exists $n_0=n_0(b,\delta,\rho)$
such that the following holds.
Let $n\geq n_0$ and $\gamma \in (0,1)$. 
Fix any subset $J \subset [n]$
and consider a fixed (deterministic) vector $x \in \R^n$ satisfying
\begin{equation}	\label{eq: x conditions}
  |x|^2\leq \frac{1}{4}(1-b)\delta \gamma^2 n^2
  \quad \text{and} \quad 
  \sharp\{i\in J:\;|x_i|\geq 1\} \geq \frac{1}{2}(1-b)\delta n.
\end{equation}
Furthermore, fix numbers $\l_1,\ldots,\l_n$ satisfying $6^{-n} \le \l_i \le 0.01$ and
a vector $a = (a_1,\dots,a_n)$ satisfying 
$\abs{a} \le 3$ and $\min a_i \ge 1/\sqrt{n}$. 
Consider the parallelotope $P:=\prod_{i=1}^n [-a_i,a_i]$, %with $\sharp J=\lceil \delta n\rceil$
and define
$$
\Lambda':=\left\{ w \in P:\; |w_i| \geq \frac{\rho}{\sqrt{n}} \; \forall i\in J \right\}
\cap \left( \frac{\l_1}{\sqrt{n}} \Z \times \cdots \times  \frac{\l_n}{\sqrt{n}} \Z \right).
$$
Let $W$ be a random vector uniformly distributed on $\Lambda'$. 
Then, for $D \coloneqq \min_i 1/\l_i$, we have
\begin{equation}\label{eq: outcome}
\Prob\Big\{ \min_{\theta \in (0,D)} \dist(\theta W\star x,\mathbb{Z}^n)^2 < \min \big( c|\theta W|^2/2,16\gamma^2n \big) \Big\}
\leq (C\gamma)^{cn},
\end{equation}
where $C,c>0$ depending only on $b,\delta,\rho$.
\end{lemma}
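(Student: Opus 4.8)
The plan is to prove the contrapositive-style bound by a union bound over a discretized range of $\theta$ combined with a counting argument on the lattice $\Lambda'$. First I would reduce the continuum of scales $\theta\in(0,D)$ to a net: since $D=\min_i 1/\l_i\le 6^n$ and the relevant quantity $\dist(\theta W\star x,\Z^n)$ is Lipschitz in $\theta$ with a controlled constant (bounded using $|W\star x|\le |x|\cdot\|W\|_\infty\lesssim \gamma n\cdot 3$, say), a geometric net $\{\theta_k\}$ of cardinality $\mathrm{poly}(n)\cdot e^{O(n)}$ suffices to capture the event up to adjusting the right-hand side by a constant factor (this is why $16\gamma^2 n$ rather than $\gamma^2 n$ appears, and why the factor $c/2$ is in front of $|\theta W|^2$). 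So it is enough to fix $\theta\in(0,D)$ and bound $\Prob_W\{\dist(\theta W\star x,\Z^n)^2<\min(c|\theta W|^2/2,16\gamma^2 n)\}$ by $(C\gamma)^{cn}$ with room to spare for the union bound.

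Next, fix $\theta$ and let $p=p(W)\in\Z^n$ be a closest integer vector to $\theta W\star x$. On the event in question, $\sum_i(\theta W_i x_i - p_i)^2 < 16\gamma^2 n$. The key point is to use the coordinates $i\in J$ with $|x_i|\ge 1$ — there are at least $\frac12(1-b)\delta n$ of them by \eqref{eq: x conditions}, call this set $J_0$. For $i\in J_0$ we have $|W_i|\ge \rho/\sqrt n$ by definition of $\Lambda'$, and $W_i$ ranges over the arithmetic progression $(\l_i/\sqrt n)\Z$ intersected with $[-a_i,a_i]$. The crucial counting estimate: for each fixed integer vector $p$ and each $i\in J_0$, the number of admissible values of $W_i$ with $(\theta W_i x_i - p_i)^2 \le t_i$ is at most $1 + 2\sqrt{t_i}\sqrt n/(\theta|x_i|\l_i) \le 1 + 2\sqrt{t_i}D\sqrt n/(\theta|x_i|)$ — i.e. the lattice step $\theta|x_i|\l_i/\sqrt n$ controls how many lattice points land in an interval of length $2\sqrt{t_i}$. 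Meanwhile the total number of integer vectors $p$ that can arise is controlled: $|p_i|\le \theta|W_i x_i| + O(\gamma\sqrt n)$ for all $i$, so $\sum_i p_i^2 \lesssim \theta^2|W\star x|^2 + \gamma^2 n^2 \lesssim \gamma^2 n^2$ using $\theta<D$ and $|x|^2\lesssim\gamma^2 n^2$; by the standard bound on integer points in a Euclidean ball this is at most $(C\gamma)^n$ (times $e^{O(n)}$, harmless). Comparing $\#\Lambda'$ from below (each coordinate in $J_0$ contributes a factor $\gtrsim \rho/(\l_i\sqrt n)\cdot\sqrt n = \rho/\l_i \gtrsim \rho D'$... more precisely $\#\{W_i\}\asymp a_i\sqrt n/\l_i \gtrsim \sqrt n\cdot\sqrt n/\l_i/\sqrt n$) against the count of "bad" pairs $(p,W)$, the ratio decays like $(C\gamma)^{cn}$.

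The main obstacle — and where the argument has to be carried out with care — is the bookkeeping in the double counting: one must lower-bound $\#\Lambda'$ and upper-bound the number of pairs $(p, W)$ with $W\in\Lambda'$ realizing the bad event, and show the quotient is $(C\gamma)^{cn}$ uniformly over $\theta$. The subtlety is that the lattice step in coordinate $i$, namely $\theta|x_i|\l_i/\sqrt n$, could be tiny (when $\theta$ is small), so that many lattice points fit in the target interval; this is exactly compensated by the constraint that $|p_i|$ is then also small, shrinking the number of available $p$. Splitting $J_0$ (or all of $[n]$) according to whether $\theta|x_i|\l_i/\sqrt n$ is large or small relative to $1$, and balancing the two contributions, is the heart of the matter. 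I would also need to absorb the "$+1$" in the per-coordinate count (the lattice interval always contains at least one point even when the step is large) — but on the coordinates $i\in J_0$ where $|x_i|\ge 1$ and $|W_i|\ge\rho/\sqrt n$, the quantity $\theta W_i x_i$ has magnitude $\gtrsim \theta\rho/\sqrt n$, and combined with $t_i\le 16\gamma^2 n/|J_0|\ll 1$ for a typical coordinate, one gets genuine anti-concentration rather than a vacuous bound. Finally a union over the $e^{O(n)}$ net points $\theta_k$ and over the choice of sub-collection is absorbed by taking $\gamma$ small enough depending on $b,\delta,\rho$, which is legitimate since $\gamma$ is allowed to depend on these parameters.
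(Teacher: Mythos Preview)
Your overall architecture --- discretize $\theta$, bound the bad event for each fixed $\theta$, then union-bound over a net of size $e^{O(n)}$ --- matches the paper. But your per-$\theta$ argument via double counting over pairs $(p,W)$ has real gaps, and the paper proceeds differently, via tensorization over the independent coordinates $W_i$.

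\textbf{The small-$\theta$ regime is not handled.} For $\theta$ small enough that $\theta|W_i x_i|<1/2$ on the relevant coordinates, the nearest integer is $p_i=0$ and the event collapses to $|W\star x|^2<(c/2)|W|^2$ --- a deterministic question that your ``$p_i$ small compensates'' remark does not touch. The paper resolves this by a \emph{median split}: from $I=\{i\in J:\,1\le|x_i|\le\gamma\sqrt n\}$ (note the upper bound, which your $J_0$ lacks) take $\mu$ the median of $\{a_i|x_i|:i\in I\}$ and $I'=\{i\in I:a_i|x_i|\le\mu\}$. For $\theta\le 1/(2\mu)$ and $i\in I'$ one has $\dist(\theta W_i x_i,\Z)=\theta|W_i x_i|\ge\theta\rho/\sqrt n$; summing over $\sharp I'\gtrsim n$ coordinates gives $\dist(\theta W\star x,\Z^n)^2\gtrsim\theta^2\ge (c/2)|\theta W|^2$ deterministically. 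This step is essential and is absent from your plan.

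\textbf{The integer-vector count is wrong, and the coordinate restriction is too weak.} Your claim $\sum_i p_i^2\lesssim\gamma^2 n^2$ fails: one only has $\theta^2|W\star x|^2\lesssim\theta^2\gamma^2 n^2$, and $\theta$ may be as large as $D\le 6^n$, so $|p|$ can be exponential in $n$. The paper never enumerates $p$. Instead, for $\theta>1/(2\mu)$ and $i\in I''=\{i\in I:a_i|x_i|\ge\mu\}$, the random variable $\theta|W_i x_i|$ is uniform on a lattice interval of diameter at least $\theta a_i|x_i|/3\ge\theta\mu/3\ge 1/6$ and step $\theta|x_i|\l_i/\sqrt n\le \gamma\,\theta\l_i\le 2\gamma$ (here the bound $|x_i|\le\gamma\sqrt n$ from the definition of $I$ is essential; on your $J_0$ the step need not be $O(\gamma)$). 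Hence $\Prob\{\dist(\theta W_i x_i,\Z)<\varepsilon\}\le C\varepsilon$ for $\varepsilon\ge 4\gamma$, and the Tensorization Lemma yields $(C\gamma)^{cn}$ directly, with no enumeration of $p$.

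In short, the proof hinges on the median split of $I$ into $I'$ (handling small $\theta$ deterministically) and $I''$ (producing, for large $\theta$, a macroscopic lattice interval with microscopic step $O(\gamma)$), followed by tensorization. Your $(p,W)$ double count might be salvageable, but only after making these same reductions; as written it does not close.
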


\begin{proof}
{\bf Step 1. Halving the set $I$.}
The assumptions on $x$ imply that the set
$$
I:=\big\{i\in J:\;1\leq |x_i|\leq \gamma\sqrt{n}\big\}
\quad \text{satisfies} \quad
\sharp I \ge \frac{1}{4} (1-b)\delta n.
$$
Next, let $\mu=\mu(x)$ be a median of the set $\{ a_i\abs{x_i} :\; i \in I \}$.
Thus, each of the subsets 
$$
I' \coloneqq \{i\in I :\; \;a_i\abs{x_i}\leq \mu\}
\quad \text{and} \quad
I'' \coloneqq \{i\in I :\; a_i\abs{x_i}\geq \mu\}
$$
contains at least a half of the elements of $I$:
\begin{equation}	\label{eq: halves sizes}
\min( \sharp I', \sharp I'') 
\ge \frac{1}{2} \sharp I
\ge \frac{1}{8} (1-b)\delta n
\ge cn,
\end{equation}
where $c>0$ depends only on $b$ and $\delta$.
Take $\theta\in(0,D)$ and consider two cases.

\medskip
{\bf Step 2. Ruling out small multipliers $\theta$.}
We claim that the range for $\theta$ in \eqref{eq: outcome} can automatically 
be narrowed to $(\frac{1}{2\mu}, D)$. 
To check this, it suffices to show that for any $\theta \in (0, \frac{1}{2\mu}]$,
the bound
\begin{equation}	\label{eq: small multipliers}
\dist(\theta W\star x,\mathbb{Z}^n)^2 \ge c |\theta W|^2/2
\end{equation}
holds deterministically, i.e. for any realization of the random vector $W$.

By construction, the coordinates $W_i$ of $W$ for $i \in I$ are uniformly distributed 
in lattice intervals, namely
\begin{equation}	\label{eq: Wi range}
W_i \sim \Unif \Big( \Big[ \frac{\rho}{\sqrt{n}}, a_i \Big] \cap \frac{\l_i}{\sqrt{n}} \Z \Big), 
\quad i \in I.
\end{equation}
This means in particular that the coordinates of $\theta W\star x$ for $i \in I'$ satisfy
$$
\theta \abs{W_i x_i} 
\le \theta a_i \abs{x_i}
\le \theta \mu \le \frac{1}{2},
$$
where we used the definition of $I'$ and the smallness of $\theta$.
This bound in turn yields
$$
\dist(\theta \abs{W_i x_i}, \Z) 
= \theta \abs{W_i x_i}
\ge \theta \cdot \frac{\rho}{\sqrt{n}} \cdot 1
$$
where in the last step we used the range of $W_i$ from \eqref{eq: Wi range} 
and the definition of $I$.
Square both sides of this bound and sum over $i \in I'$ to get
$$
\dist(\theta W\star x, \Z^n)^2
\ge \frac{\theta^2 \rho^2}{n} \sharp I'
\ge c \theta^2 \rho^2
\ge c_0 \theta^2 \abs{W}^2/2,
$$
where we used \eqref{eq: halves sizes}, 
suppressed $\rho$ into $c_0$, and 
noted that $\abs{W}^2 \le \abs{a}^2 \le 9$ 
by definition of $W$ and assumption on $a$. 
We have proved \eqref{eq: small multipliers}.

\medskip
{\bf Step 3. Handling a fixed multiplier $\theta$.}
Due to the previous step, our remaining task is to show that 
$$
\Prob\Big\{ \min_{\theta \in (1/2\mu,D)} \dist(\theta W\star x,\mathbb{Z}^n)^2 <%49
16\gamma^2n \Big\}
\leq (C\gamma)^{cn}.
$$
To do this, let us first estimate the probability 
that $\dist(\theta W\star x,\mathbb{Z}^n)^2 <49\gamma^2n$
for a {\em fixed} multiplier\footnote{Extending the range by $1$ will be help us in the next step to unfix $\theta$;
increasing the constant factor $16$ to $49$ will help us run a net approximation argument in Step 4.}  $\theta \in (1/2\mu,D+1)$.

Let $i \in I''$. Recall from \eqref{eq: Wi range} that the random variable $\abs{W_i}$ 
is uniformly distributed in a lattice interval whose diameter is at least
$$
a_i - \frac{\rho}{\sqrt{n}} - \frac{2\l_i}{\sqrt{n}}
\ge \frac{a_i}{3};
$$
here we used the assumptions $a_i \ge 1/\sqrt{n}$, $\rho \le 1/2$ and $\l_i \le 0.01$.
Thus, the random variable $\theta \abs{W_i x_i}$,
i.e. the absolute value of a coordinate of $\theta W\star x$, 
is distributed in a lattice interval of diameter at least 
$$
\frac{a_i}{3} \theta \abs{x_i}
\ge \frac{\theta \mu}{3} 
\ge \frac{1}{6};
$$
here we used the definition of $I''$ and the largeness of $\theta$.
Moreover, the step of that lattice interval (the distance between any adjacent points) is
$$
\frac{\l_i}{\sqrt{n}} \theta \abs{x_i}
\le \l_i \theta \gamma
\le \l_i (D+1) \gamma
\le 2\gamma;
$$
here we used the definition of $I$, the range of $\theta$, the definition of $D$,
and the assumption that $\l_i \le 0.01$.

The random variable $\theta \abs{W_i x_i}$ 
that is uniformly distributed on a lattice interval of diameter at least $1/6$
and with step at most $2\gamma$ satisfies
$$
\Pr{\dist(\theta \abs{W_i x_i}, \Z) < \e} \le C\e
\quad \text{for any } \e \ge 4 \gamma,
$$
where $C$ is an absolute constant.
Squaring the distances, summing them over $i \in I''$ 
and using Tensorization Lemma~\ref{tensorization},
we conclude that 
$$
\Pr{ \dist(\theta W\star x,\mathbb{Z}^n)^2 < \e^2 \sharp I'' }
\le (C'\e)^{\sharp I''}
\quad \text{for any } \e \ge 4 \gamma.
$$
Recall from \eqref{eq: halves sizes} that $\sharp I'' \ge cn$. 
Hence, substituting $\e = C_0 \gamma$ with sufficiently large $C_0$
(depending on $c$ and thus ultimately on $b$ and $\delta$), we get 
$$
\Pr{ \dist(\theta W\star x,\mathbb{Z}^n)^2 < 49 \gamma^2 n }
\le (C''\gamma)^{cn}.
$$

\medskip
{\bf Step 4. Unfixing the multiplier $\theta$.}
It remains to make the distance bound hold simultaneously for all $\theta$ in the 
range $(1/2\mu,D)$. To this end, we use a union bound combined with a
discretization argument. To discretize the range of $\theta$, consider the 
lattice interval
$$
\Theta \coloneqq \Big( \frac{1}{2\mu},D \Big) \cap \frac{1}{\sqrt{n}} \Z.
$$
For sufficiently large $n$, its cardinality can be bounded as follows:
$$
\sharp \Theta \le (D+1) \sqrt{n}+1
\le (6^n + 1)\sqrt{n} + 1 \le 7^n;
$$
here we used that $D = \min_i(1/\l_i)$ by definition, and $\l_i \ge 6^{-n}$ by assumption.
The construction of $\Theta$ shows that any $\theta \in (1/2\mu,D)$ can be approximated
by some $\theta_0 \in \Theta$ in the sense that
$$
\theta \le \theta_0 \le \theta + \frac{1}{\sqrt{n}}.
$$
Note in particular that $\theta_0$ falls in the range $(1/2\mu, D+1)$, which 
we handled in the previous step of the proof. 

Recall that we need to bound the probability of the event 
$$
\EE \coloneqq 
\Big\{ \min_{\theta \in (1/2\mu,D)} \dist(\theta W\star x,\mathbb{Z}^n) < 4\gamma \sqrt{n} \Big\}.
$$
Suppose this event occurs. Let $\theta$ be the multiplier that realizes the minimum
and consider an approximation $\theta_0 \in \Theta$ as above. 
By triangle inequality, it satisfies
$$
\dist(\theta_0 W\star x,\mathbb{Z}^n) 
< 4\gamma \sqrt{n} + \abs{\theta_0 - \theta} \abs{W \star x}.
$$
By construction, we have $\abs{\theta_0 - \theta} \le 1/\sqrt{n}$ 
and 
$$
\abs{W \star x} 
\le \norm{W}_\infty \abs{x}
\le 3 \gamma n;
$$
here we used that $\norm{W}_\infty \le \norm{a}_\infty \le |a| \le 3$ by definition 
of $W$ and assumptions on $a$,
as well as $\abs{x} \le \gamma n$ by assumption on $x$.
Thus,
$$
\dist(\theta_0 W\star x,\mathbb{Z}^n) 
\le 7 \gamma n.
$$ 
For each fixed $\theta_0$, the result of the previous step of the proof 
shows that the probability of this event is at most $(C''\gamma)^{cn}$.

As we know, the number of possible choices of $\theta$ is at most $\sharp \Theta \le 7^n$. 
Thus, the union bound gives
$$
\Prob(\EE) \le 7^n (C''\gamma)^{cn} \le (C\gamma)^{cn}.
$$
This completes the proof of the lemma.
\end{proof}

\begin{remark}
Note that with our choice of parameters, $\Lambda'$ is non-empty, and therefore $W$ is well-defined in the Lemma above.
\end{remark}

From Lemma \ref{l: aux1} we deduce

\begin{lemma}\label{l: aux 03982}
For any $U\geq 1$, $b\in(0,1)$ and $\delta,\rho\in(0,1/2]$, 
there exist $n_0=n_0(U,b,\delta,\rho)$,
 $\gamma=\gamma(U,b,\delta,\rho)\in(0,1)$
and $u=u(b,\delta,\rho)\in(0,1/4)$ such that the following holds.
Let $n\geq n_0$, and let $J$ be a fixed subset of $[n]$ of cardinality at least $\delta n$.
Further, consider a random vector $X$ in $\R^n$ 
with independent components $X_i$ that satisfies 
$$
\E|X|^2\leq \frac{1}{8}(1-b)\delta \gamma^2 n^2
\quad \text{and} \quad
\max_i \Le(X_i,1) \leq b.
$$
Consider a set $\Lambda'$ described in Lemma~\ref{l: aux1}
and a random vector $W$ uniformly distributed on $\Lambda'$.
Then
$$
\Prob_W\big\{\RLCD^X_{\gamma\sqrt{n},u}(W)< \min_i 1/\l_i \big\}\leq U^{-n}.
$$
\end{lemma}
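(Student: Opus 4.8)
The plan is to derive this from Lemma~\ref{l: aux1} by averaging over the realizations of $\overline X$, letting the ``deterministic vector $x$'' of Lemma~\ref{l: aux1} be played by a typical value of $\overline X$. Write $D:=\min_i 1/\l_i$, let $c,C>0$ be the constants from Lemma~\ref{l: aux1} (depending only on $b,\delta,\rho$), set $u:=\min(c,1)/5\in(0,1/4)$ and $\gamma_1:=4\gamma$; we take $\gamma<1/4$, so $\gamma_1\in(0,1)$. First I would introduce the set of ``good'' realizations
$$
\mathcal G_1:=\Big\{x\in\R^n:\ \sharp\{i\in J:\ |x_i|\ge1\}\ge\tfrac12(1-b)\delta n\ \text{ and }\ |x|^2\le\tfrac14(1-b)\delta\gamma_1^2 n^2\Big\},
$$
and check that $\Prob_X\{\overline X\in\mathcal G_1\}\ge 3/4$ once $n\ge n_0$. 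Indeed $\Le(X_i,1)\le b$ gives $\Prob\{|\overline X_i|\ge1\}\ge1-b$, so since the indicators $\1_{\{|\overline X_i|\ge1\}}$, $i\in J$, are independent and $\sharp J\ge\delta n$, a Chernoff bound bounds the failure of the first requirement by $e^{-c'n}$ with $c'=c'(b,\delta)>0$; and $\E|\overline X|^2=2\Var(X)\le 2\E|X|^2\le\tfrac14(1-b)\delta\gamma^2 n^2$, so Markov's inequality bounds the failure of the second requirement by $\tfrac1{16}$.

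The heart of the argument is a pointwise statement about $w\in\Lambda'$. For $x\in\R^n$ set $\mathrm{Bad}(x):=\{w\in\Lambda':\ \exists\,\theta\in(0,D):\ \dist^2(\theta\, w\star x,\Z^n)<\min(c|\theta w|^2/2,\,16\gamma_1^2 n)\}$. For $x\in\mathcal G_1$, the vector $x$ satisfies the hypotheses of Lemma~\ref{l: aux1} with parameter $\gamma_1$ (and the same $J$, $\l_i$, and parallelotope defining $\Lambda'$), so that lemma yields $\Prob_W\{W\in\mathrm{Bad}(x)\}\le(C\gamma_1)^{cn}$. I claim that \emph{if $\Prob_X\{\overline X\in\mathcal G_1,\ w\notin\mathrm{Bad}(\overline X)\}\ge\tfrac12$, then $\RLCD^X_{\gamma\sqrt n,u}(w)\ge D$.} To see this, fix $\theta\in(0,D)$. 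Whenever $x\in\mathcal G_1$ and $w\notin\mathrm{Bad}(x)$, the definition of $\mathrm{Bad}(x)$ together with $2u\le c/2$ and $16\gamma_1^2 n\ge 2\gamma^2 n$ gives $\dist^2(\theta\, w\star x,\Z^n)\ge\min(c|\theta w|^2/2,\,16\gamma_1^2 n)\ge 2\min(u|\theta w|^2,\gamma^2 n)$; hence
\begin{align*}
\E\,\dist^2(\theta\, w\star\overline X,\Z^n)
&\ge\ \E\big[\dist^2(\theta\, w\star\overline X,\Z^n)\,\1_{\{\overline X\in\mathcal G_1,\ w\notin\mathrm{Bad}(\overline X)\}}\big]\\
&\ge\ 2\min(u|\theta w|^2,\gamma^2 n)\cdot\Prob_X\{\overline X\in\mathcal G_1,\ w\notin\mathrm{Bad}(\overline X)\}\ \ge\ \min(u|\theta w|^2,\gamma^2 n).
\end{align*}
Since $\theta\in(0,D)$ was arbitrary, the definition of the RLCD gives $\RLCD^X_{\gamma\sqrt n,u}(w)\ge D$, proving the claim.

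Taking the contrapositive, $\RLCD^X_{\gamma\sqrt n,u}(w)<D$ forces $\Prob_X\{\overline X\in\mathcal G_1,\ w\in\mathrm{Bad}(\overline X)\}>\Prob_X\{\overline X\in\mathcal G_1\}-\tfrac12\ge\tfrac14$, and therefore, by Markov's inequality in $W$, then Fubini (as $W\perp X$), then Lemma~\ref{l: aux1} on $\mathcal G_1$,
\begin{align*}
\Prob_W\big\{\RLCD^X_{\gamma\sqrt n,u}(W)<D\big\}
&\le\ 4\,\E_W\,\Prob_X\{\overline X\in\mathcal G_1,\ W\in\mathrm{Bad}(\overline X)\}\\
&=\ 4\,\E_X\big[\1_{\{\overline X\in\mathcal G_1\}}\,\Prob_W\{W\in\mathrm{Bad}(\overline X)\}\big]\ \le\ 4(C\gamma_1)^{cn}=4(4C\gamma)^{cn}.
\end{align*}
This is at most $U^{-n}$ once $\gamma=\gamma(U,b,\delta,\rho)$ is chosen small enough and $n_0=n_0(U,b,\delta,\rho)$ large enough; note $u$ depends only on $b,\delta,\rho$. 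The step I expect to be the main obstacle — and the reason one cannot just ``freeze'' a typical realization of $\overline X$ and quote Lemma~\ref{l: aux1} directly — is the control of $|\overline X|^2$: under a second-moment hypothesis alone this quantity is bounded by $O(\gamma^2 n^2)$ only with a \emph{constant} (Markov) probability, never with probability $1-U^{-n}$. The averaging identity above is engineered exactly so that a fixed lower bound $\Prob_X\{\overline X\in\mathcal G_1\}\ge 3/4$ suffices, the entire $U$-dependence being pushed into the choice of $\gamma$ through the bound $(C\gamma_1)^{cn}$.
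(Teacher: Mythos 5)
Your proof is correct and follows the same double-counting idea as the paper: fix $w\in\Lambda'$, apply Lemma~\ref{l: aux1} over ``typical'' realizations of the symmetrized random vector, and then exchange the roles of $w$ and the realization. The paper executes this by first reducing (via a density argument) to $X$ uniform on a finite product set and then counting pairs $(x,w)\in\mathcal X'\times\Lambda'$; you achieve the same effect directly through the Markov--Fubini identity $\Prob_W\{\cdot\}\le 4\,\E_X[\1_{\{\overline X\in\mathcal G_1\}}\Prob_W\{W\in\mathrm{Bad}(\overline X)\}]$, which is cleaner and avoids the approximation step. Two smaller points where your write-up is tighter than the paper's exposition: (i) you keep track that the RLCD is defined through $\overline X$, not $X$, so the ``good realizations'' set must be phrased for $\overline X$ (the paper writes $X$ throughout, which is harmless but leaves the reader to supply the transfer of the moment and anti-concentration hypotheses from $X$ to $\overline X$); and (ii) because $\E|\overline X|^2=2\Var(X)$ can be up to twice $\E|X|^2$, invoking Lemma~\ref{l: aux1} with the enlarged parameter $\gamma_1=4\gamma$ is exactly what is needed so that the Markov step yields a nontrivial probability bound for $|\overline X|^2$ — a rescaling the paper leaves implicit. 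The resulting bound $4(4C\gamma)^{cn}$ is absorbed into the choice of $\gamma$ just as in the paper, and $u$ depends only on $b,\delta,\rho$ through the constant $c$ of Lemma~\ref{l: aux1}, matching the claimed dependence.
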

\begin{proof}
We apply a simple argument based on change of integration order, or a ``double-coun\-ting'' trick.
Without any loss of generality, we can assume that the random vector $X$ is uniformly distributed
on a finite set $\mathcal X:=\mathcal X_1\times\dots\times \mathcal X_n$, so that for any $x\in \mathcal X$, we have
$$
\Prob\{X=x\}=\frac{1}{\sharp \mathcal X}.
$$
Indeed, this follows from a simple fact that any multidimensional distribution $\zeta=(\zeta_1,\dots,\zeta_n)$
with independent components 
can be approximated by a discrete distribution $\tau=(\tau_1,\dots,\tau_n)$
of the above form, so that
$$
\sup\limits_{\theta\in[0,6^n]}
\sup\limits_{v\in S^{n-1}}\big|\E \dist^2(\theta (v_1 \bar \zeta_1,\dots,v_n \bar \zeta_n),\mathbb{Z}^n)-
\E \dist^2(\theta (v_1 \bar \tau_1,\dots,v_n \bar \tau_n),\mathbb{Z}^n)
\big|
$$
is arbitrarily small. Then the definition of RLCD would imply that proving the required assertion for $\tau$
implies corresponding assertion for $\zeta$, perhaps with a different choice of $\gamma,u,n_0$.

Set $\mathcal X':=\{x\in\mathcal X:\;\mbox{$x$ satisfies \eqref{eq: x conditions}}\}$.
In view of our assumptions on $X$ (and assuming that $n$ is sufficiently large), we have
$$
\Prob\{X\in\mathcal X'\}\geq 1/4,
$$
while, in view of the assertion of Lemma~\ref{l: aux1} and summing over $x \in \mathcal{X}'$, we get
\begin{align}\label{eq: 3948724098217}
\sharp\big\{(x,w)\in\mathcal X'\times\Lambda':
&\min_{\theta \in (0,D)} \dist(\theta w\star x,\mathbb{Z}^n)^2 \ge \min(c|\theta w|^2/2,16\gamma^2n)\big\}\\
&\geq \big(1-(C\gamma)^{cn}\big)\,\sharp\mathcal X'\,\sharp\Lambda',\nonumber
\end{align}
where $D = \min_i 1/\l_i$.
This implies
\begin{multline*}
\sharp\big\{w\in \Lambda':
\sharp\{x\in\mathcal X': \min_{\theta \in (0,D)} \dist(\theta w\star x,\mathbb{Z}^n)^2 \ge \min(c|\theta w|^2/2,16\gamma^2n)\}
\geq \sharp\mathcal X'/4\big\}\\
\geq \big(1-2(C\gamma)^{cn}\big)\,\sharp\Lambda'
\end{multline*}
(indeed, if the last assertion were not true, we would get that the cardinality of the set in \eqref{eq: 3948724098217}
was bounded above by $(1-2(C\gamma)^{cn})\,\sharp\Lambda'\,\cdot\,\sharp\mathcal X'
+2(C\gamma)^{cn}\,\sharp\Lambda'\,\cdot\,\sharp\mathcal X'/4\leq (1-3(C\gamma)^{cn}/2)\sharp\mathcal X'\,\sharp\Lambda'$).
Back from counting to probabilities, we get from the last bound and the estimate $\sharp\mathcal X'/4\geq \sharp X/16$:
$$
\sharp\big\{w\in \Lambda':\, \min_{\theta \in (0,D)} \E_X\,\dist(\theta w\star X,\mathbb{Z}^n)^2
 \ge \min(c|\theta w|^2/32,\gamma^2n) \big\}
\geq
\big(1-2(C\gamma)^{cn}\big)\,\sharp\Lambda'.
$$
This can be equivalently rewritten with $u \coloneqq c/32$ as
$$
\sharp\big\{w\in \Lambda':\;\RLCD^X_{\gamma\sqrt{n},u}(w) > D\big\}\geq \big(1-2(C\gamma)^{cn}\big)\,\sharp\Lambda',
$$
and the result follows by taking any $\gamma \in (0,1)$ satisfying $2(C\gamma)^{cn}\leq U^{-n}$.
\end{proof}

\medskip

\begin{proof}[Proof of Theorem~\ref{mainprop}]
Without loss of generality, $\E X=0$, so that $\Var(X)=\E|X|^2$.
We obtain the results as a combination of Lemmas~\ref{l: ball to rect} and \ref{l: aux 03982}.
To do so, note that $\Lambda$ can be covered by $2^{C_1n}$ sets of the type $\Lambda'$ 
(one for each paralellotope and a support set $J$). Then the probability measures on 
$\Lambda$ and a given $\Lambda'$ are within $2^{C_1n}$ from each other. Thus the probability in the conclusion of Theorem~\ref{mainprop} is bounded by $2^{C_1n} U^{-n} \le (cU)^{-n}$. 
It remains to re-define $U\to cU$ to get the result.
\end{proof}

\section{Proof of Theorem \ref{mainthm2}}		\label{s: proof dist}

%\blue{Roman: FYI, in the preamble of the LaTeX source of our paper, I created operators 
%for the common notions, such as \textbackslash Sparse,
%\textbackslash Comp, \textbackslash Incomp,  \textbackslash RLCD, 
%\textbackslash Row,  \textbackslash Col. Please use them with the backslash, it helps LaTeX 
%format them better.}

%The high-level structure of the proof of Theorem~\ref{mainthm2} consists of three components:
%\begin{itemize}
%\item An anti-concentration estimate for the matrix-vector product in terms of RLCD of the vector
%(Lemma~\ref{smallball}).
%\item A stability result which is used to show that an appropriate approximation
%of a vector has comparable value of RLCD, and a small matrix-vector product (see Theorem~\ref{main-nets}).
%\item A cardinality estimate for the set of approximations of normal vectors.
%Here, we apply the main result of the previous section, Theorem~\ref{mainprop},
%together with a cardinality estimate for $\Lambda^\kappa(\varepsilon)$.
%\end{itemize}

In this section, we split the Euclidean unit sphere $S^{n-1}$ into
{\it level sets} collecting (incompressible) unit vectors having comparable RLCD.
To show that with a high probability the normal vector does not belong to a level set with a small RLCD,
we consider a discrete approximating set whose cardinality is well controlled from above,
by using a combination of Theorem \ref{main-nets} and Theorem~\ref{mainprop}.
In view of the stability property of RLCD, the event that the normal vector has a small RLCD
is contained within the event that one of the vectors in the approximating set has a small RLCD.
We then apply the small ball probability estimates for individual vectors, combined with the union bound,
to show that the latter event has probability close to zero.

\medskip

For any $D\geq 1$, $\gamma,u\in(0,1)$, and an $m\times n$ random matrix $M$, define, as before,
$$S_D(M,\gamma,u):=\{v\in \sfe:\, \RLCD^M_{\gamma\sqrt{n},u}\in [D,2D]\}.$$

As the first step, we combine the approximation Theorem~\ref{main-nets} with Theorem~\ref{mainprop} to obtain

%The next proposition gives a description of the discretization of $S_D(M,\gamma,u)$
%which we are able to construct using the results from the previous sections.
\begin{proposition}\label{p: discrete complete}
For arbitrary $b,\rho,\delta\in(0,1)$, $U\geq 1$ and $K\geq 1$ there exist
$n_{\ref{p: discrete complete}}=n_{\ref{p: discrete complete}}(b,\delta,\rho,U,K)$,
$u_{\ref{p: discrete complete}}=u_{\ref{p: discrete complete}}(b,\delta,\rho)
\in(0,u_{\ref{l: aux incomp rlcd}}(b,\delta,\rho))$, $\gamma_{\ref{p: discrete complete}}
=\gamma_{\ref{p: discrete complete}}(b,\delta,\rho,U,K)\in(0,1/2)$
%and $\gamma'=\gamma'(h,b,\delta,\rho,K)$
 with the following property.
Let $D\geq 1$ and $0<\varepsilon\leq 1/D$.
Let $n\geq n_{\ref{p: discrete complete}}$, %$n\leq m$,
$m\geq 1$,
and let $M$ be an $m\times n$ matrix with independent entries $M_{ij}$ such that
$\Le(M_{ij},1)\leq b$ for all $i,j$;
$$
\Var(M^\top e_i)\leq \frac{1}{8}\min\Big((1-b)\delta\gamma_{\ref{p: discrete complete}}^2 n^2,
\varepsilon^{-2} u_{\ref{p: discrete complete}}n\Big)
$$
for every $i\leq m$, and
$$
\E\|M\|_\HS^2\leq K n^2.
$$ 
Then there is a non-random set $\Lambda\subset\R^n$ of cardinality at most $(\varepsilon U)^{-n}$
having the following properties:
\begin{itemize}
\item For any $y\in \Lambda$, we have $3/2\geq |y|\geq 1/2$;
\item For any $y\in \Lambda$, $\RLCD^M_{\gamma_{\ref{p: discrete complete}} \sqrt{n}/2,u_{\ref{p: discrete complete}}/4}(y)\geq D$ and
$\RLCD^M_{2\gamma_{\ref{p: discrete complete}} \sqrt{n},4u_{\ref{p: discrete complete}}}(y)\leq 2D$;
\item With probability at least $1-e^{-n}$, for any $x\in S_D(M,\gamma_{\ref{p: discrete complete}},
u_{\ref{p: discrete complete}})\cap \Incomp(\delta,\rho)$ there is
$y\in \Lambda$ with $\|x-y\|_\infty\leq \varepsilon/\sqrt{n}$ and $|M (x-y)|\leq \varepsilon\sqrt{n}$.
\end{itemize}
\end{proposition}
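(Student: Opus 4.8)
The plan is to combine the approximation Theorem~\ref{main-nets} with Theorem~\ref{mainprop}. Theorem~\ref{main-nets} produces a net $\Lambda^\kappa(\varepsilon_0)$ of cardinality $\le(C_\kappa/\varepsilon_0)^n$ on which the two error bounds hold and on which the perturbed RLCD still lies in the right window (membership in $\tilde S^M_{\omega,\gamma}(D)$); Theorem~\ref{mainprop} is then used to \emph{thin} this net, since among the relevant lattice points only an exponentially small proportion has RLCD as small as $2D$. Concretely: fix a constant $\kappa>e$ (say $\kappa=3$), so that $(\kappa/\sqrt2)^{-2n}\le e^{-n}$. Invoke Theorem~\ref{mainprop} with data $(U',b,\delta',\rho')$, where $\delta',\rho'\in(0,1/2]$ are the spreading parameters attached to $\Incomp(\delta,\rho)$ (see below) and $U'$ is a large multiple of $U$, depending also on $\kappa$ and $K$, to be pinned down at the end; let $\gamma_0,u_0$ be the resulting constants. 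Set $u_{\ref{p: discrete complete}}:=\tfrac14\min\bigl(u_0,u_{\ref{l: aux incomp rlcd}}(b,\delta,\rho)\bigr)$ and choose $\gamma_{\ref{p: discrete complete}}\le\tfrac12\gamma_0$ small enough (depending on $b,\delta,\rho,U,K$) that the variance hypothesis of the present proposition implies the variance hypothesis of Theorem~\ref{mainprop} for each row. Finally put $\varepsilon_0:=\min\bigl(\varepsilon/\sqrt{2K},\,0.009\bigr)$; since $\varepsilon\le1/D$ and $(1-b)\delta<1$, one checks directly that condition \eqref{cond} of Theorem~\ref{main-nets} holds with $\omega=u_{\ref{p: discrete complete}}$, $\gamma=\gamma_{\ref{p: discrete complete}}$ and accuracy $\varepsilon_0$.

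\textbf{Producing the net and its RLCD properties.} With these choices Theorem~\ref{main-nets} yields an event $\mathcal E_0$ of probability $\ge1-(\kappa/\sqrt2)^{-2n}\ge1-e^{-n}$ on which every $x\in\sfe\cap S^M_{u_{\ref{p: discrete complete}},\gamma_{\ref{p: discrete complete}}}(D)=S_D(M,\gamma_{\ref{p: discrete complete}},u_{\ref{p: discrete complete}})$ is approximated by some $y\in\Lambda^\kappa(\varepsilon_0)\cap\tilde S^M_{u_{\ref{p: discrete complete}},\gamma_{\ref{p: discrete complete}}}(D)$ with $\|x-y\|_\infty\le\varepsilon_0/\sqrt n\le\varepsilon/\sqrt n$ and $|M(x-y)|\le\sqrt2\,\varepsilon_0(\E\|M\|_\HS^2)^{1/2}/\sqrt n\le\sqrt{2K}\,\varepsilon_0\sqrt n\le\varepsilon\sqrt n$. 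By the very definition of $\tilde S^M$, such a $y$ automatically obeys $1/2\le|y|\le3/2$, $\RLCD^M_{\gamma_{\ref{p: discrete complete}}\sqrt n/2,\,u_{\ref{p: discrete complete}}/4}(y)\ge D$ and $\RLCD^M_{2\gamma_{\ref{p: discrete complete}}\sqrt n,\,4u_{\ref{p: discrete complete}}}(y)\le2D$, i.e.\ the first two bullets. Now restrict to incompressible $x$: by the standard spreading fact (used already in the proof of Lemma~\ref{l: aux incomp rlcd}; see \cite[Lemma~3.4]{RudVer-square}) every $x\in\Incomp(\delta,\rho)$ has at least $\delta'n$ coordinates with $|x_i|\ge 2\rho'/\sqrt n$ for suitable $\delta',\rho'$ depending only on $\delta,\rho$; since $\|x-y\|_\infty\le\varepsilon_0/\sqrt n\le\rho'/\sqrt n$, the approximant $y$ has at least $\delta'n$ coordinates with $|y_i|\ge\rho'/\sqrt n$. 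Let $\mathcal S$ be the deterministic set of all $w$ with $1/2\le|w|\le3/2$ and this spreading property, and define
$$
\Lambda:=\Lambda^\kappa(\varepsilon_0)\cap\tilde S^M_{u_{\ref{p: discrete complete}},\gamma_{\ref{p: discrete complete}}}(D)\cap\mathcal S .
$$
This $\Lambda$ depends only on the \emph{distribution} of $M$ (the RLCD is a deterministic functional of it), so it is non-random, it satisfies the first two bullets, and on $\mathcal E_0$ it satisfies the third, because the approximant of any $x\in S_D(M,\gamma_{\ref{p: discrete complete}},u_{\ref{p: discrete complete}})\cap\Incomp(\delta,\rho)$ constructed above lies in $\Lambda$.

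\textbf{The cardinality bound (the heart).} Write $\Lambda^\kappa(\varepsilon_0)=\bigcup_{\alpha\in\mathcal F}\Lambda_\alpha(\varepsilon_0)$ with $\sharp\mathcal F\le(C\log\kappa)^n$. For each $\alpha$, the set $\Lambda_\alpha(\varepsilon_0)\cap\mathcal S$ is contained in a set of the form \eqref{eq: Lambda} with $\delta,\rho$ replaced by $\delta',\rho'$ and lattice scales $\l_i=\alpha_i\varepsilon_0$, after a harmless adjustment of the exponentially small weights that brings all $\l_i$ into the admissible window $[6^{-n},0.01]$ of Theorem~\ref{mainprop} (this affects the $|M(x-y)|$ bound only by a further $e^{-cn}$ factor, and is needed only in the non-trivial regime where $D$ is at most exponential in $n$). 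Because $\RLCD^M_{\cdot,\cdot}=\min_{i\le m}\RLCD^{M^i}_{\cdot,\cdot}$ and $\min_i1/\l_i\ge1/\varepsilon_0\ge2D$, Theorem~\ref{mainprop} applied to $X=M^i$ (whose coordinates are independent with $\Le(M_{ij},1)\le b$, and whose variance is controlled by the present hypothesis) shows that for each fixed $i$ and $\alpha$ the number of $w\in\Lambda_\alpha(\varepsilon_0)\cap\mathcal S$ with $\RLCD^{M^i}_{2\gamma_{\ref{p: discrete complete}}\sqrt n,\,4u_{\ref{p: discrete complete}}}(w)\le2D$ is at most $U'^{-n}(C_\kappa/\varepsilon_0)^n$. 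Since $\Lambda\subset\{w:\RLCD^M_{2\gamma_{\ref{p: discrete complete}}\sqrt n,\,4u_{\ref{p: discrete complete}}}(w)\le2D\}$, summing over $i\le m$ and $\alpha\in\mathcal F$ gives
$$
\sharp\Lambda\le m\,(C\log\kappa)^n\,U'^{-n}\,(C_\kappa/\varepsilon_0)^n .
$$
Using $\varepsilon_0\ge\varepsilon/(CK^{1/2})$ and $m\le2^n$, and choosing $U'$ to be a large enough multiple of $U$ (depending on $\kappa$ and $K$ only), the right-hand side is at most $(\varepsilon U)^{-n}$, as required; enlarging $n_{\ref{p: discrete complete}}$ absorbs all ``for $n$ large'' conditions.

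\textbf{Main obstacle.} The genuine content is Theorem~\ref{mainprop}: it alone supplies the thinning factor $U'^{-n}$ that makes $\Lambda$ exponentially smaller than a generic $\varepsilon_0/\sqrt n$--net of $\sfe$, which is exactly what the bound $(\varepsilon U)^{-n}$ demands. Everything else is bookkeeping, and the only place where care is genuinely required is matching the four doubled/halved RLCD parameters across Theorem~\ref{main-nets}, Theorem~\ref{mainprop} and the definition of $\tilde S^M$, together with ensuring the lattice scales $\alpha_i\varepsilon_0$ land in $[6^{-n},0.01]$; this last point is what forces the harmless rounding-up of the tiny weights and the (innocuous) restriction to $D$ at most exponential in $n$.
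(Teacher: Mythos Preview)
Your approach is essentially the paper's: apply Theorem~\ref{main-nets} with a fixed $\kappa>e$ and a rescaled accuracy $\varepsilon_0\asymp\varepsilon/\sqrt{K}$ to produce the net $\Lambda^\kappa(\varepsilon_0)$ and the approximation property (third bullet), then restrict to points in $\tilde S^M$ that are $\ell_\infty$--close to $\Incomp(\delta,\rho)$, and finally thin each $\Lambda_\alpha(\varepsilon_0)$ via Theorem~\ref{mainprop} to get the cardinality bound. The paper's parameter choices differ only cosmetically ($\kappa=5$, $\varepsilon'=\rho\varepsilon/(100\sqrt{2\max(K,1)})$, $U'=100\sqrt{2K}UC_\kappa/\rho$), and you are in fact more explicit than the paper in two places: you spell out the union over rows $i\le m$ when passing from $\RLCD^M$ to $\RLCD^{M^i}$ (the paper simply writes $\sharp(\Lambda\cap\Lambda_\alpha)\le(U')^{-n}\sharp\Lambda_\alpha$ without comment), and you flag the range constraint $\lambda_i\in[6^{-n},0.01]$ required by Theorem~\ref{mainprop}.

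The one loose end is your assertion $m\le 2^n$: the proposition places no upper bound on $m$, so this is unjustified as stated. In every use of the proposition in the paper one has $m\le n$, so the extra factor $m$ is harmlessly absorbed by enlarging $U'$ (or $n_{\ref{p: discrete complete}}$); but strictly speaking either your proof or the statement needs a mild hypothesis such as $m\le n$ to close this step.
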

\begin{proof}
Set $\kappa:=5$, and let $C_\kappa>0$ be the constant from Remark~\ref{rem: card of Lambda}.
Let $U\geq 1$, $U':=100\sqrt{2K}U C_\kappa/\rho$, and set
$$n_{\ref{p: discrete complete}}:=n_0(U',b,\delta,\rho/2),\;
\gamma=\gamma_{\ref{p: discrete complete}}:=\gamma(U',b,\delta,\rho/2),\;
u=u_{\ref{p: discrete complete}}:=u(b,\delta,\rho/2)\in(0,\frac{1}{4}),$$
where the functions $n_0(\cdot),\gamma(\cdot),u(\cdot)$
are taken from Theorem~\ref{mainprop}.
Finally, set
$$\varepsilon':=\frac{\rho\varepsilon}{100\sqrt{2\max(K,1)}}\in(0,0.01),$$
and let $\Lambda^\kappa(\varepsilon')$ be as in Definition~\ref{def: disc part 2}.
%Note that the cardinality of $\Lambda^\kappa(\varepsilon')$, according to the Remark~\ref{rem: card of Lambda},
%is bounded from above by $(C_\kappa/\varepsilon')^n$.

Let $\Lambda$ be a subset of all vectors $y\in \Lambda^\kappa(\varepsilon')$
such that
$$\RLCD^M_{\gamma \sqrt{n}/2,u/4}(y)\geq D\quad\mbox{ and }\quad
\RLCD^M_{2\gamma \sqrt{n},4u}(y)\leq 2D,$$
and, such that the $\ell_\infty$--distance
of $y$ to $\Incomp(\delta,\rho)$ is at most $\varepsilon'/\sqrt{n}$. Note that the last condition implies that for any $y\in\Lambda$,
$\sharp\{i\leq n:\;|y_i|\geq \rho/(2\sqrt{n})\}\geq \delta n$, see the argument in Lemma 3.4 from \cite{RudVer-square}.

By our choice of $\varepsilon'$ and the condition on the matrix, we have
$$
(\varepsilon')^2 \Var(M^\top e_i)\leq \frac{1}{8}\frac{\gamma^2 n^2}{D^2};\quad
(\varepsilon')^2 \Var(M^\top e_i)\leq \frac{1}{8} un.
$$
Then, according to Theorem~\ref{main-nets}, with probability at least $1-(5/\sqrt{2})^{-2n}$ for any incompressible vector $x\in S_D(M,\gamma,u)$
there is a vector $y\in \Lambda$ such that $\|x-y\|_\infty \leq \varepsilon'/\sqrt{n}$ and
$|M(x-y)|\leq \sqrt{2}\varepsilon'\sqrt{K}\sqrt{n}\leq \varepsilon\sqrt{n}$.

It remains to estimate the cardinality of $\Lambda$.
We recall that
$$
\Lambda^{\kappa}(\varepsilon')=\bigcup_{\alpha\in\mathcal{F}} \Lambda_{\alpha} (\varepsilon'),
$$
where the collection $\mathcal F$ of parameters $(\alpha_1,\dots,\alpha_n)\in(0,1]^n$ is given by Lemma~\ref{netsonnets}.
Fix for a moment any $(\alpha_1,\dots,\alpha_n)\in\mathcal F$, and set
$\lambda_i:=\alpha_i\varepsilon'\in(0,0.01]$, $i\leq n$.
Observe that $1/\lambda_i\geq 1/\varepsilon'> 2/\varepsilon\geq 2D$, $i\leq n$.
Hence, we can apply Theorem~\ref{mainprop} to obtain
$$
\sharp(\Lambda\cap \Lambda_{\alpha} (\varepsilon'))\leq \sharp\Lambda_{\alpha} (\varepsilon')\,(U')^{-n}.
$$
Taking the union over all $(\alpha_1,\dots,\alpha_n)\in\mathcal F$, we then get
$$
\sharp\Lambda\leq (U')^{-n}\sum\limits_{\alpha\in\mathcal F}\sharp\Lambda_{\alpha} (\varepsilon')
\leq (\varepsilon U)^{-n},
$$
where at the last step we used our definition of $U'$.
\end{proof}

Next, we combine the discrete approximation set introduced above, with the small ball probability of Lemma~\ref{smallball}:

\begin{proposition}\label{pevelsets}
For any $b,\rho,\delta\in(0,1)$ and $K\geq 1$ there are $n_{\ref{pevelsets}}=n_{\ref{pevelsets}}(b,\delta,\rho,K)$,
$u_{\ref{pevelsets}}=u_{\ref{pevelsets}}(b,\delta,\rho)\in(0,u_{\ref{l: aux incomp rlcd}}(b,\delta,\rho))$,
$\gamma_{\ref{pevelsets}}=\gamma_{\ref{pevelsets}}(b,\delta,\rho,K)\in(0,1/2)$
and $\gamma'_{\ref{pevelsets}}=\gamma'_{\ref{pevelsets}}(b,\delta,\rho,K)$ with the following property.
Let $n\geq n_{\ref{pevelsets}}$, $e^2\leq D\leq D_0\leq e^{\gamma'_{\ref{pevelsets}} n}$,
$0\leq k\leq n/\ln D_0$, $m:=n-k$, 
and let $M$ be an $m\times n$ random matrix with independent entries $M_{ij}$ such that
$\Le(M_{ij},1)\leq b$ for all $i,j$;
\begin{equation}\label{eq: aux 498750983275}
\Var(M^i)\leq \frac{1}{64}\min\Big((1-b)\delta\gamma_{\ref{pevelsets}}^2 n^2,D_0^2 u_{\ref{pevelsets}}n\Big)
\end{equation}
for every $i \le m$, 
and
$$
\E\|M\|_\HS^2\leq K n^2.
$$ 
Let $M^{(1)}$ be the matrix obtained from $M$ by removing the first row. Then
$$
\Prob\big\{\mbox{$\exists$ $x\in \Incomp(\delta,\rho)\cap S_D(M,\gamma_{\ref{pevelsets}},u_{\ref{pevelsets}})$
s.t.\ $\RLCD^{M^{(1)}}_{\gamma_{\ref{pevelsets}}\sqrt{n},u_{\ref{pevelsets}}}(x)\geq D_0$, $M^{(1)}x=0$}\big\}
\leq 2e^{-n}.
$$
\end{proposition}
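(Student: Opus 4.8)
The plan is to use the net of Proposition~\ref{p: discrete complete} to replace the uniform statement over the level set by a union bound over a deterministic finite set, each member of which is controlled by the small-ball estimate of Lemma~\ref{smallball} applied to $M^{(1)}$. First I would fix the parameters: set $u_{\ref{pevelsets}}:=u_{\ref{p: discrete complete}}(b,\delta,\rho)$, let $C_1,\widetilde c_1$ be the constants of Lemma~\ref{smallball} for $c_0=1/2$ and parameter $u_{\ref{pevelsets}}/4$ (enlarging $C_1$, assume $C_1\ge2$), put $U:=2e^3C_1^2$, $\gamma_{\ref{pevelsets}}:=\gamma_{\ref{p: discrete complete}}(b,\delta,\rho,U,K)$, $\gamma'_{\ref{pevelsets}}:=\min(1,\widetilde c_1\gamma_{\ref{pevelsets}}^2/4)$, and take $n_{\ref{pevelsets}}\ge n_{\ref{p: discrete complete}}(b,\delta,\rho,U,K)$ large enough for the elementary estimates below. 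Abbreviating $\gamma=\gamma_{\ref{pevelsets}}$, $u=u_{\ref{pevelsets}}$, set the accuracy $\varepsilon:=1/D_0$; then $\varepsilon\le 1/D$, and since $(1-b)\delta<1$, hypothesis~\eqref{eq: aux 498750983275} implies, with a factor of $8$ to spare, both the variance condition required by Proposition~\ref{p: discrete complete} for this $\varepsilon$ and the bound~\eqref{eq: r range} of Lemma~\ref{stable-rlcd} read with $D_0$ in place of $D$ and $r=\varepsilon/\sqrt n$.

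Applying Proposition~\ref{p: discrete complete} to $M$ at level $D$ with accuracy $\varepsilon$, I obtain a deterministic set $\Lambda$ with $\sharp\Lambda\le(\varepsilon U)^{-n}=(D_0/U)^n$, satisfying $1/2\le|y|\le3/2$ for all $y\in\Lambda$, together with an event $\mathcal E_0$ of probability at least $1-e^{-n}$ on which every $x\in S_D(M,\gamma,u)\cap\Incomp(\delta,\rho)$ admits a $y\in\Lambda$ with $\|x-y\|_\infty\le\varepsilon/\sqrt n$ and $|M(x-y)|\le\varepsilon\sqrt n$. I would then single out the deterministic subset $\Lambda':=\{y\in\Lambda:\ \RLCD^{M^{(1)}}_{\gamma\sqrt n/2,\,u/4}(y)\ge D_0\}$ (deterministic because RLCD is computed from the distributions of the rows). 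The key claim is that, on $\mathcal E_0$, any $x$ witnessing the bad event of the proposition yields a $y\in\Lambda'$ with $|M^{(1)}y|\le\varepsilon\sqrt n$: taking the approximating $y\in\Lambda$, we get $|M^{(1)}y|=|M^{(1)}(y-x)|\le|M(y-x)|\le\varepsilon\sqrt n$ since $M^{(1)}x=0$ and deleting a row cannot increase the Euclidean norm; and $y\in\Lambda'$ because each row $M^i$ ($i\ge2$) satisfies $\RLCD^{M^i}_{\gamma\sqrt n,u}(x)\ge\RLCD^{M^{(1)}}_{\gamma\sqrt n,u}(x)\ge D_0$, so the stability argument of Lemma~\ref{stable-rlcd} applied row by row upgrades this to $\RLCD^{M^{(1)}}_{\gamma\sqrt n/2,u/4}(y)\ge D_0$.

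For a fixed $y\in\Lambda'$, Lemma~\ref{smallball} applied to the $(m-1)\times n$ matrix $M^{(1)}$ (with $c_0=1/2$, $L=\gamma\sqrt n/2$, parameter $u/4$) gives $\Prob\{|M^{(1)}y|\le\varepsilon''\sqrt{m-1}\}\le(C_1\varepsilon'')^{m-1}$ for $\varepsilon'':=C_1\exp(-\widetilde c_1\gamma^2n/4)+C_1/D_0$, and $\varepsilon''\le 2C_1/D_0$ since $D_0\le e^{\gamma'n}$ and $\gamma'\le\widetilde c_1\gamma^2/4$. Because $D_0\ge e^2$ forces $k\le n/\ln D_0\le n/2$, one has $m-1\ge n/2-1\ge n/C_1^2$ for $n\ge n_{\ref{pevelsets}}$, hence $\varepsilon''\sqrt{m-1}\ge(C_1/D_0)\sqrt{m-1}\ge\sqrt n/D_0=\varepsilon\sqrt n$, so $\{|M^{(1)}y|\le\varepsilon\sqrt n\}\subset\{|M^{(1)}y|\le\varepsilon''\sqrt{m-1}\}$. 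A union bound over $\Lambda'$ then gives
$$
\Prob\big\{\exists\,y\in\Lambda':\ |M^{(1)}y|\le\varepsilon\sqrt n\big\}
\le(D_0/U)^n(2C_1^2/D_0)^{m-1}
=D_0^{\,k+1}(2C_1^2)^{m-1}U^{-n},
$$
using $n-(m-1)=k+1$; and since $k\ln D_0\le n$ yields $D_0^{k}\le e^n$, hence $D_0^{k+1}\le e^{(1+\gamma')n}$, the right-hand side is at most $(2e^{1+\gamma'}C_1^2/U)^n\le e^{-n}$ by the choice of $U$. Since the bad event of the proposition is contained in $\mathcal E_0^c$ together with the event just estimated, its probability is at most $2e^{-n}$.

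The step I expect to be the main obstacle is the implication ``$x$ has $\RLCD^{M^{(1)}}\ge D_0$'' $\Rightarrow$ ``its approximant $y$ has $\RLCD^{M^{(1)}}\ge D_0$''. Lemma~\ref{stable-rlcd} is phrased in terms of the \emph{exact} value $\RLCD^{M^{(1)}}_{\gamma\sqrt n,u}(x)$, which here may be much larger than $D_0$, so that its hypothesis~\eqref{eq: r range} taken literally would be far too strong; one must instead re-run only the portion of its proof that treats multipliers $\theta<D_0$, for which the relevant instance of~\eqref{eq: r range} is the one with $D_0$ (and $r=\varepsilon/\sqrt n$), and check that~\eqref{eq: aux 498750983275} --- which must simultaneously serve Proposition~\ref{p: discrete complete} --- exactly licenses this; this is what the constant $1/64$ in~\eqref{eq: aux 498750983275} is calibrated for. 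The only other bookkeeping is to ensure the net, of size $\approx D_0^n$, is still beaten by the small-ball factor $\approx D_0^{-(m-1)}$, which is precisely where the hypothesis $k\le n/\ln D_0$ enters, via $D_0^{k}\le e^n$.
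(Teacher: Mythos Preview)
Your proof is correct and follows essentially the same route as the paper's own proof: the same parameter choices (up to naming), the same accuracy $\varepsilon=1/D_0$, the same net $\Lambda$ from Proposition~\ref{p: discrete complete}, the same passage from $x$ to an approximant $y$ with $\RLCD^{M^{(1)}}_{\gamma\sqrt n/2,\,u/4}(y)\ge D_0$ via stability, the same small-ball input from Lemma~\ref{smallball}, and the same closing arithmetic $D_0^{k+1}(2C_1^2)^{m-1}U^{-n}\le e^{-n}$ using $k\ln D_0\le n$.

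Your closing remark about the stability step is in fact sharper than the paper, which simply writes ``as follows from Lemma~\ref{stable-rlcd}''. You are right that Lemma~\ref{stable-rlcd} as stated requires \eqref{eq: r range} with the \emph{exact} value $D=\RLCD^{M^i}_{\gamma\sqrt n,u}(x)$, which here may exceed $D_0$ (and could even be infinite), while the available variance bound \eqref{eq: aux 498750983275} only delivers \eqref{eq: r range} with $D_0$. Your fix --- re-running just the ``second conclusion'' portion of that lemma's proof restricted to multipliers $\theta<D_0$, where the required inequality $2\theta^2 r^2\Var(M^i)\le\frac14\min(u\theta^2|x|^2,L^2)$ follows from $r^2\Var(M^i)\le\frac18\min(u,\,L^2/D_0^2)$ --- is exactly right and is what the paper tacitly intends. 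This is the only place in either argument that needs any care beyond bookkeeping.
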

\begin{proof}
First, we should carefully define the parameters.
We choose $u:=u_{\ref{p: discrete complete}}(b,\delta,\rho)$.
Next, set $U:=2e^3 C_{\ref{smallball}}^2$,
where $C_{\ref{smallball}}$ is taken from Lemma~\ref{smallball} with parameters
$c_0:=1/2$ and $u/4$, and we assume without loss of generality that $C_{\ref{smallball}}\geq 1$.
Finally, take $\gamma:=\gamma_{\ref{p: discrete complete}}(b,\delta,\rho,U,K)$,
$\gamma':=\widetilde c_{\ref{smallball}} \gamma^2/4\leq 1$.

Let $e^2\leq D\leq D_0\leq e^{\gamma' n}$, and let random matrix $M$ satisfy the assumptions of the proposition.
Let $\Lambda$ be the set defined in Proposition~\ref{p: discrete complete} with $\varepsilon:=1/D_0$.
Set
$$\Event_D:=\big\{
\mbox{$\exists$ $x\in \Incomp(\delta,\rho)\cap S_D(M,\gamma,u)$
s.t.\ $\RLCD^{M^{(1)}}_{\gamma\sqrt{n},u}(x)\geq D_0$, $M^{(1)}x=0$}\big\}.$$
Note that whenever $x$ and $y$ are two vectors in $\R^n$ with
$\RLCD^{M^{(1)}}_{\gamma\sqrt{n},u}(x)\geq D_0$ and $\|x-y\|_\infty\leq \frac{1}{D_0\sqrt{n}}$,
then necessarily $\RLCD^{M^{(1)}}_{\gamma\sqrt{n}/2,u/4}(y)\geq D_0$
(as follows from Lemma~\ref{stable-rlcd}).

Hence, applying Proposition~\ref{p: discrete complete}, we get
\begin{align*}
\Prob(\Event_D)
&\leq e^{-n}+\Prob\big\{\mbox{There is $y\in \Lambda$ with $|M^{(1)} y|\leq \sqrt{n}/D_0$ and
$\RLCD^{M^{(1)}}_{\gamma\sqrt{n}/2,u/4}(y)\geq D_0$}\big\}\\
&\leq e^{-n}+\sharp\Lambda\,\,\sup\limits_{y}\Prob\big\{|M^{(1)} y|\leq \sqrt{n}/D_0\big\}\\
&\leq e^{-n}+(D_0/U)^n\,\,\sup\limits_{y}\Prob\big\{|M^{(1)} y|\leq \sqrt{n}/D_0\big\},
\end{align*}
where the supremum is taken over all vectors $y\in\frac{3}{2}B_2^n\setminus \frac{1}{2}B_2^n$
with $\RLCD^{M^{(1)}}_{\gamma\sqrt{n}/2,u/4}(y)\geq D_0$.

Fix any $y$ satisfying the above conditions.
Set $\widetilde\varepsilon:=2C_{\ref{smallball}}/D_0$ and observe that, by our conditions on $D_0$,
$$
\widetilde\varepsilon\geq C_{\ref{smallball}}\exp(-\widetilde c_{\ref{smallball}} \gamma^2 n/4)
+ C_{\ref{smallball}}/\RLCD^{M^{(1)}}_{\gamma\sqrt{n}/2,u/4}(y).
$$
Applying Lemma~\ref{smallball}, we then obtain
\begin{align*}
\Prob\{|M^{(1)} y|\leq \sqrt{n}/D_0\}&\leq\Prob\{|M^{(1)} y|\leq 2\sqrt{m-1}/D_0\}\\
&\leq\Prob\{|M^{(1)} y|\leq \widetilde\varepsilon\sqrt{m-1}\}\leq  
(C_{\ref{smallball}}\widetilde\varepsilon)^{m-1}.
\end{align*}
Taking the supremum over all admissible $y$, we then get
$$
\Prob(\Event_D)\leq e^{-n}+(D_0/U)^n\,(C_{\ref{smallball}}\widetilde\varepsilon)^{m-1}\leq e^{-n}+
D_0^{n-m+1}U^{-n}\big(2C_{\ref{smallball}}^2\big)^n.
$$
The result follows by the choice of $U$ and the condition on $m$.
\end{proof}

Our proof of Theorem~\ref{mainthm2}, in the case $\Var(A_j)=\Theta(n)$, $j=1,2,\dots,n$,
is a straightforward application of Proposition~\ref{pevelsets} (taking a dyadic sequence of level sets),
together with results of \cite{Liv} on invertibility over compressible vectors.
The fact that in our model some columns may have variances much greater than $n$
adds some complexity to the proof because the relation \eqref{eq: aux 498750983275}
for such columns may hold true only for ``large enough'' $D_0$ leaving a gap in the treatment
of small values of the parameter.
We deal with this issue in the statement below by carefully splitting the event in question
into subevents and invoking Lemma~\ref{l: aux incomp rlcd}
that allows to deterministically bound RLCD in terms of the variance.

\begin{proposition}\label{p: many cases}
Let $b,\delta,\rho\in(0,1)$ and $K\geq 1$ be parameters,
and let $u_{\ref{pevelsets}}$, $\gamma_{\ref{pevelsets}}$ be taken
from Proposition~\ref{pevelsets}.
Then there are $n_{\ref{p: many cases}}(b,\delta,\rho,K)$ and $\gamma'_{\ref{p: many cases}}(b,\delta,\rho,K)$ with the following property.
Let $n\geq n_{\ref{p: many cases}}$, let $n\times n$ matrix $A$ be as in the statement of Theorem~\ref{mainthm2},
and let $j\leq n$ be such that
$$
\Var(A_j)\leq \min\Big(h_{\ref{l: aux incomp rlcd}}^2 e^{-4} n^2,\frac{1}{64}(1-b)\delta\gamma_{\ref{pevelsets}}^2 n^2\Big),
$$
where $h_{\ref{l: aux incomp rlcd}}$ is taken from Lemma~\ref{l: aux incomp rlcd}.
Then
$$
\Prob\big\{\mbox{$\exists$ $x\in \Incomp(\delta,\rho)$ orth.\ to $A_i$, $i\neq j$, with
$\RLCD^{A_j}_{\gamma_{\ref{pevelsets}}\sqrt{n},u_{\ref{pevelsets}}}(x)\leq e^{\gamma'_{\ref{p: many cases}}n}$}\big\}
\leq 2^{-n/2}.
$$
\end{proposition}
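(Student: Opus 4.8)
The plan is to fix the column $A_j$, write $B$ for the $(n-1)\times n$ matrix with rows $A_i^\top$, $i\neq j$, and bound
$\Event:=\{\exists\,x\in\Incomp(\delta,\rho)\colon\ Bx=0,\ \RLCD^{A_j}_{\gamma_{\ref{pevelsets}}\sqrt n,u_{\ref{pevelsets}}}(x)\le e^{\gamma'_{\ref{p: many cases}}n}\}$ by a dyadic decomposition over the value of $\RLCD^{A_j}(x)$. First note that the hypothesis $\Var(A_j)\le h_{\ref{l: aux incomp rlcd}}^2 e^{-4}n^2$, together with $u_{\ref{pevelsets}}<u_{\ref{l: aux incomp rlcd}}$ and the fact that $\RLCD^X_{L,u}(v)$ is non-increasing in $u$, lets us apply Lemma~\ref{l: aux incomp rlcd} to $X=A_j$ with $T=\max(\Var(A_j),n)$: for all large $n$ and every incompressible $x$ one gets the deterministic lower bound $\RLCD^{A_j}_{\gamma_{\ref{pevelsets}}\sqrt n,u_{\ref{pevelsets}}}(x)\ge h_{\ref{l: aux incomp rlcd}}n/\sqrt{T}\ge e^2$. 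Hence $\Event\subseteq\bigcup_p\Event_{D_p}$ where $D_p=e^2 2^p$ runs over $O(n)$ dyadic levels in $[e^2,e^{\gamma'_{\ref{p: many cases}}n}]$ and $\Event_{D}:=\{\exists\,x\in\Incomp(\delta,\rho)\colon Bx=0,\ \RLCD^{A_j}_{\gamma_{\ref{pevelsets}}\sqrt n,u_{\ref{pevelsets}}}(x)\in[D,2D]\}$, and it suffices to prove $\Prob(\Event_D)\le 2^{-n}/\mathrm{poly}(n)$ for each level $D$.

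Fix $D$. The idea is to place $\Event_D$ (or suitable subevents of it) inside the event of Proposition~\ref{pevelsets}, with the distinguished row taken to be $A_j^\top$ and the rest of $M$ taken to be a sub-matrix of $B$. Write $t(D_0):=\tfrac1{64}\min\!\big((1-b)\delta\gamma_{\ref{pevelsets}}^2 n^2,\,D_0^2 u_{\ref{pevelsets}}n\big)$ and $G(D_0):=\{i\in[n]\setminus\{j\}\colon \Var(A_i)>t(D_0)\}$, the columns whose variance is too large for \eqref{eq: aux 498750983275} at threshold $D_0$; by $\sum_i\Var(A_i)\le Kn^2$ one has $\#G(D_0)\le Kn^2/t(D_0)$. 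Let $M$ have rows $A_j^\top$ followed by $A_i^\top$ for $i\in[n]\setminus(\{j\}\cup G(D_0))$, so that $M^{(1)}x=0$ whenever $Bx=0$ and $\RLCD^M_{\gamma\sqrt n,u}(x)=\min\!\big(\RLCD^{A_j}_{\gamma\sqrt n,u}(x),\RLCD^{M^{(1)}}_{\gamma\sqrt n,u}(x)\big)$. Now decompose $\Event_D$ once more by the dyadic value $D'$ of $\RLCD^M_{\gamma\sqrt n,u}(x)$: since every row of $M$ is a column of $A$ of variance at most $\tfrac1{64}(1-b)\delta\gamma^2 n^2$, Lemma~\ref{l: aux incomp rlcd} again forces $D'\ge e^2$, while $D'\le\RLCD^{A_j}(x)\le 2D$, so $D'$ lies in one of $O(\log D)$ bands, $x\in S_{D'}(M,\gamma,u)$, and on this event $\RLCD^{M^{(1)}}_{\gamma\sqrt n,u}(x)\ge\RLCD^M_{\gamma\sqrt n,u}(x)=D'$. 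Taking $D_0:=D'$ and $k:=\#G(D')$, the piece $\Event_D\cap\{D'\ \text{in a fixed band}\}$ is contained in the event of Proposition~\ref{pevelsets}, whose probability is then $\le 2e^{-n}$, provided the two ``budget'' conditions hold: that \eqref{eq: aux 498750983275} is valid for the row $A_j^\top$ at threshold $D_0=D'$ (i.e. $\Var(A_j)\le t(D')$), and that $k=\#G(D')\le n/\ln D'$.

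These two conditions are where the casework of the statement lives, and verifying them simultaneously is the main obstacle: $\Var(A_j)\le t(D')$ forces $D'\gtrsim\sqrt{\Var(A_j)/n}$, while $\#G(D')\le n/\ln D'$ forces $\min\!\big((1-b)\delta\gamma^2 n^2,(D')^2 un\big)\gtrsim Kn\ln D'$; for large $D'$ the first factor wins and the second condition reads $D'\le e^{cn}$ (harmless, since $D'\le 2D\le e^{O(n)}$), and for small $D'$ one needs $D'$ above an absolute constant, which is guaranteed by $D'\ge e^2$. The genuinely delicate regime is small $D'$ together with a column — possibly $A_j$ itself — of variance substantially larger than $n$: there $t(D')=\tfrac1{64}(D')^2un$, and both $\Var(A_j)\le t(D')$ and $\#G(D')\le 64Kn/(u(D')^2)\le n/\ln D'$ may fail. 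This is dealt with exactly as the paper's remark suggests: one splits $\Event_D$ according to whether the variances in play exceed the breakpoint $(D')^2 un/64$, and in the ``large variance'' alternative one invokes the \emph{deterministic} bound $\RLCD^{A_i}_{\gamma\sqrt n,u}(x)\ge h_{\ref{l: aux incomp rlcd}}n/\sqrt{\Var(A_i)}$ from Lemma~\ref{l: aux incomp rlcd}, which renders the corresponding dyadic bands empty; the numerical breakpoints are chosen so that for every band one alternative is non-vacuous and meets both budget conditions. Summing $2e^{-n}$ over the resulting $O(n)\cdot O(\log D)\cdot O(1)$ subevents and over the $O(n)$ levels $D$ gives $\Prob(\Event)\le 2^{-n/2}$ for $n$ large, as claimed.
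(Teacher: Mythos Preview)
Your overall strategy---dyadic level sets of the RLCD, removal of high-variance rows so that \eqref{eq: aux 498750983275} holds, then repeated application of Proposition~\ref{pevelsets} with Lemma~\ref{l: aux incomp rlcd} supplying the floor on the RLCD---is exactly the paper's. But the execution has a genuine gap in the inner decomposition. You define $M$ through $G(D_0)$, then set $D':=\RLCD^M(x)$, and finally put $D_0:=D'$; this is circular. Read as ``for each dyadic $D'$ form $M(D')$ and intersect with $\{\RLCD^{M(D')}(x)\in[D',2D']\}$,'' the union over $D'$ need not cover $\Event_D$: if a single column $A_{i_0}$ has $\Var(A_{i_0})=t(D_1)$ and $\RLCD^{A_{i_0}}(x)<D_1\le D/2$ (consistent with Lemma~\ref{l: aux incomp rlcd} once $D_1\gtrsim n^{1/4}$), the step function $D'\mapsto\RLCD^{M(D')}(x)$ jumps from a value $\ge D$ (for $D'<D_1$, when $A_{i_0}$ is excluded) straight down below $D_1$ (for $D'\ge D_1$) and never lands in $[D',2D']$. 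A related numerical slip: the budget $\#G(D')\le n/\ln D'$ requires $(D')^2\ge 64K\ln D'/u$, which at $D'=e^2$ reads $u\ge 128K/e^4>2$; since $u<1/4$ and $K\ge 1$, ``guaranteed by $D'\ge e^2$'' is simply false. Your final paragraph gestures at a fix (``split by whether variances exceed $(D')^2un/64$'') but never names the replacement matrix or the associated $D_0$, so neither the covering nor the row budget is actually established.

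The paper avoids both problems by \emph{decoupling} the thresholds from the dyadic parameter. It fixes once and for all the matrix $M$ (rows $A_i^\top$ with $\Var(A_i)\le\min(h^2e^{-4}n^2,\tfrac1{64}(1-b)\delta\gamma^2n^2)$; this loses only $O(1)$ rows) and a further submatrix $W\subset M^{(1)}$ (rows with variance $<n^{9/8}$; this loses $O(n^{7/8})$ rows). The split is then: (i) if $\RLCD^M(x)\ge n$, apply Proposition~\ref{pevelsets} to $M$ with $D_0=D\in[n/2,e^{\gamma'n}]$; here $t(D_0)$ saturates at $\tfrac1{64}(1-b)\delta\gamma^2n^2$, so every row meets \eqref{eq: aux 498750983275} and $k=O(1)\le n/\ln D_0$; (ii) if $\RLCD^M(x)<n$ but $\RLCD^W(x)\ge n$, pick the offending row $M^q$ (its variance lies in $[n^{9/8},h^2e^{-4}n^2]$, so $\RLCD^{M^q}(x)\ge e^2$), and apply the proposition to the matrix $(M^q;W)$ with $D_0=n$ and $D\in[e^2,n]$; (iii) if $\RLCD^W(x)<n$, use that every row of $W$ has $\RLCD\ge hn/n^{9/16}\ge n^{3/8}$ and apply the proposition to $W$ with $D_0=D\in[n^{3/8},n]$. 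In each branch the matrix and $D_0$ are fixed \emph{before} the dyadic level, so no circularity arises and the row budget $k\le n/\ln D_0$ holds by construction.
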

\begin{proof}
We will assume that $n$ is large, and that $\gamma'>0$ is a small parameter whose
value can be recovered from the proof below.
Without loss of generality, $j=1$.
Let $A'$ be the submatrix of $A$ composed of all columns $A_i$
satisfying
$$
\Var(A_i)\leq \min\Big(h_{\ref{l: aux incomp rlcd}}^2 e^{-4} n^2,
\frac{1}{64}(1-b)\delta\gamma_{\ref{pevelsets}}^2 n^2\Big).
$$
We note that the number of columns of $A'$ is at least $n-K/\min\big(h_{\ref{l: aux incomp rlcd}}^2 e^{-4},
\frac{1}{64}(1-b)\delta\gamma_{\ref{pevelsets}}^2\big)$.
Further, let $M$ be the transpose of $A'$,
and denote by $W$ the submatrix of $M^{(1)}$ formed by removing rows with variances at least $n^{9/8}$.

The proof of the statement is reduced to estimating probability of the event
$$
\Event':=\big\{\mbox{$\exists$ $x\in \Incomp(\delta,\rho)$ with $M^{(1)}x=0$ and
$\RLCD^{A_1}_{\gamma_{\ref{pevelsets}}\sqrt{n},u_{\ref{pevelsets}}}(x)\leq e^{\gamma'n}$}\big\}.
$$
We can write
\begin{align*}
\Prob(\Event')
\leq
&\sum\limits_{\log_2 n-1\leq \ell\leq \gamma'n\log_2 e}
\Prob\big\{\mbox{$\exists$ $x\in \Incomp(\delta,\rho)\cap S_{2^\ell}(M,\gamma_{\ref{pevelsets}},u_{\ref{pevelsets}})$ with $M^{(1)}x=0$}\big\}
\\
&+
\Prob\big\{\mbox{$\exists$ $x\in \Incomp(\delta,\rho)$ with $M^{(1)}x=0$ and $\RLCD^M_{\gamma_{\ref{pevelsets}}\sqrt{n},
u_{\ref{pevelsets}}}(x)<n$}\big\}.
\end{align*}
The first sum can be estimated directly by applying Proposition~\ref{pevelsets}
with $D_0:=D:=2^\ell$, $\log_2 n-1\leq \ell\leq \gamma' n\log_2 e$
(note that the relation \eqref{eq: aux 498750983275} is fulfilled for such $D$ for all rows of $M$,
and that the proposition can be applied as long as $K/\min\big
(h_{\ref{l: aux incomp rlcd}}^2 e^{-4},\frac{1}{64}(1-b)\delta\gamma_{\ref{pevelsets}}^2\big)\leq
1/\gamma'$).
Further, the condition that $\RLCD^M_{\gamma_{\ref{pevelsets}}\sqrt{n},
u_{\ref{pevelsets}}}(x)<n$ implies that either $\RLCD^W_{\gamma_{\ref{pevelsets}}\sqrt{n},
u_{\ref{pevelsets}}}(x)<n$ or
$\RLCD^W_{\gamma_{\ref{pevelsets}}\sqrt{n},
u_{\ref{pevelsets}}}(x)\geq n$ and $\RLCD^{M^q}_{\gamma_{\ref{pevelsets}}\sqrt{n},
u_{\ref{pevelsets}}}(x)< n$ for some row $M^q$ of $M$.
Hence, we get
\begin{align*}
\Prob(\Event')
\leq 2n\cdot 2e^{-n}
&+\sum_{q}\Prob\big\{\mbox{$\exists$ $x\in \Incomp(\delta,\rho)$ with $Wx=0$ and $\RLCD^W_{\gamma_{\ref{pevelsets}}\sqrt{n},
u_{\ref{pevelsets}}}(x)\geq n$}\\
&\hspace{2cm}\mbox{and $\RLCD^{M^q}_{\gamma_{\ref{pevelsets}}\sqrt{n},
u_{\ref{pevelsets}}}(x)< n$}\big\}\\
&+
\Prob\big\{\mbox{$\exists$ $x\in \Incomp(\delta,\rho)$ with $Wx=0$ and $\RLCD^W_{\gamma_{\ref{pevelsets}}\sqrt{n},
u_{\ref{pevelsets}}}(x)<n$}\big\}.
\end{align*}
To estimate the sum, we apply Lemma~\ref{l: aux incomp rlcd} which, together with our restrictions on the variances,
allows to deterministically bound the RLCD with respect to $M^q$ by $e^2$.
Thus, we get
\begin{align*}
&\Prob\big\{\mbox{$\exists$ $x\in \Incomp(\delta,\rho)$ with $Wx=0$ and $\RLCD^W_{\gamma_{\ref{pevelsets}}\sqrt{n},
u_{\ref{pevelsets}}}(x)\geq n$}\\
&\hspace{2cm}\mbox{and $\RLCD^{M^q}_{\gamma_{\ref{pevelsets}}\sqrt{n},
u_{\ref{pevelsets}}}(x)< n$}\big\}\\
&=\Prob\big\{\mbox{$\exists$ $x\in \Incomp(\delta,\rho)$ with $Wx=0$ and $\RLCD^W_{\gamma_{\ref{pevelsets}}\sqrt{n},
u_{\ref{pevelsets}}}(x)\geq n$}\\
&\hspace{2cm}\mbox{and $e^2\leq \RLCD^{M^q}_{\gamma_{\ref{pevelsets}}\sqrt{n},
u_{\ref{pevelsets}}}(x)< n$}\big\}.
\end{align*}
Splitting the interval $[e^2,n]$ into dyadic subintervals and applying Proposition~\ref{pevelsets} with $D_0:=n$
and for the matrix formed by concatenating $W$ and $M^q$,
we get an upper bound $2e^{-n}\log_2 n$ for the probability.

In order to estimate probability of the event
$$
\big\{\mbox{$\exists$ $x\in \Incomp(\delta,\rho)$ with $Wx=0$ and $\RLCD^W_{\gamma_{\ref{pevelsets}}\sqrt{n},
u_{\ref{pevelsets}}}(x)<n$}\big\},
$$
we apply Lemma~\ref{l: aux incomp rlcd}; this time the definition of $W$ implies that
$\RLCD$ with respect to each row is deterministically bounded from below by $n^{3/8}$, for a sufficiently large $n$.
Again, splitting of the interval $[n^{3/8},n]$ into dyadic subintervals reduces the question to estimating events
of the form
$$
\big\{\mbox{$\exists$ $x\in \Incomp(\delta,\rho)\cap S_D(W,\gamma_{\ref{pevelsets}},u_{\ref{pevelsets}})$ with $Wx=0$}\big\}
$$
for some $D\in [n^{3/8},n]$. Taking $D_0:=D$, one can see that the condition \eqref{eq: aux 498750983275}
is fulfilled for all rows of $W$, and that the difference between the number of columns and rows of $W$
is clearly less than $n/\ln D_0$. Thus, Proposition~\ref{pevelsets} is applicable.

Summarizing, we get $\Prob(\Event')\leq C' ne^{-n}\ln n$
for a universal constant $C'>0$. The result follows for all sufficiently large $n$.
\end{proof}

Now, we are in position to prove Theorem~\ref{mainthm2}.
\begin{proof}[Proof of Theorem~\ref{mainthm2}]
We will assume that $n$ is large.
We start by recording a property of $A$ which follows immediately from Lemma 2.1 (that is, \cite[Lemma~5.3]{Liv}):
For any $j\leq n$, with probability at least $1-e^{-c_1 n}$ any unit vector
orthogonal to $\{A_i,\; i\neq j\}$, 
is $(\delta,\rho)$--incompressible
for some $\delta,\rho\in(0,1)$ depending only on $b,K$ (here, $c_1\in(0,1)$ depends only on $b,K$).
Indeed, let $j\leq n$, let $B$ be the $n\times (n-1)$ matrix formed from $A$
by removing $A_j$, and define $M:=B^\tran.$ Then
$$
\Pr{\exists x\in\Comp(\delta,\rho)\mbox{ orthogonal to }H_j}
\leq \Pr{\inf_{x\in \Comp(\delta,\rho)} |Mx|=0}
\leq e^{-c_1 n},
$$
where in the last passage Lemma 2.1 (that is, \cite[Lemma~5.3]{Liv}) was used. 

Set
$$
r:=\min\Big(h_{\ref{l: aux incomp rlcd}}^2 e^{-4},
\frac{1}{64}(1-b)\delta\gamma_{\ref{pevelsets}}^2\Big),
$$
where $h_{\ref{l: aux incomp rlcd}}$ and $\gamma_{\ref{pevelsets}}$ are defined in respective lemmas with the parameters $b,K,\delta,\rho$.
Pick any index $j\leq n$ such that $\Var(A_j)\leq rn^2$, and let $v$
be a random unit vector orthogonal to $H_j$ and measurable with respect to the sigma-field generated by $H_j$.
Applying Proposition~\ref{p: many cases} together with the above observation, we get
$$
\mbox{$v$ is $(\delta,\rho)$--incompressible and 
$\RLCD^{A_j}_{\gamma_{\ref{pevelsets}}\sqrt{n},
u_{\ref{pevelsets}}}(v) \geq e^{\gamma'_{\ref{p: many cases}} n}$}
$$
with probability at least $1-e^{c_1 n}-2^{-n/2}$.
Application of Lemma~\ref{smallball-1} finishes the proof.
\end{proof}

\begin{remark}
In our proof, the Randomized Least Common Denominator acts like a mediator
in the relationship between anticoncentration properties of matrix-vector products and
cardinalities of corresponding discretizations (nets),
following the ideas developed in \cite{RudVer-square}.
A crucial element of our argument is the fact that RLCD is stable with respect to small perturbations of the vector, which we quantify in Lemma~\ref{stable-rlcd}.

An alternative approach recently considered in \cite{TikhErd}
is based on 
directly estimating the concentration function for ``typical'' points on a multidimensional lattice.
The argument of \cite{TikhErd} uses as an important step certain stability properties
of the L\'evy concentration function and of small ball probability estimates
for linear combinations of Bernoulli random variables.
However, in the general (non-Bernoulli) setting, and with different distributions of entries
of the matrix, obtaining satisfactory
stability properties similar to those in \cite{TikhErd} seems to be a very non-trivial problem, in the situation when the approximation is done by a random vector. We note here that in our net construction the approximating vector is, indeed, random, and depends on the realization of the matrix.

On a technical level, since RLCD is a structural (geometric) property, its stability follows from relatively
simple computations, while the L\'evy concentration function is much more difficult to control;
in particular, the Esseen lemma provides only an upper bound for the concentration function,
hence cannot be relied on when studying its stability.
\end{remark}

\section{Proof of the Theorem \ref{mainthm1}}		\label{s: proof main}
%------------

In this section we formally derive Theorem \ref{mainthm1} from Theorem \ref{mainthm2},
using a modification of the ``invertibility via distance'' lemma from \cite{RudVer-square}. 
\begin{lemma}[Invertibility via distance]\label{invdist}
%Consider, as before, $k\in [\frac{n}{10}, n]$ satisfying (\ref{conditiononorder}). 
Let $A$ be any $n\times n$ random matrix. Fix a pair of parameters $\delta, \rho\in (0,\frac{1}{2})$, and assume that $n\geq 4/\delta$. Then, for any $\varepsilon>0,$ %and any $\sigma\subset \{1,...,n\}$ with $\#\sigma\geq 0.7n,$ one has
$$\Prob\left\{\inf_{x\in \Incomp(\delta, \rho)} |Ax|\leq \varepsilon \frac{\rho}{\sqrt{n}}\right\}
\leq \frac{4}{\delta n} \inf\limits_{\substack{I\subset[n],\\ \sharp I=n-\lfloor \delta n/2\rfloor}}\sum_{j\in I}
\Prob\{\dist(A_j, H_j)\leq \varepsilon\},$$
where $H_j$ denotes the subspace spanned by all the columns of $A$ except for $A_j.$
\end{lemma}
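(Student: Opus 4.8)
The plan is to combine two standard ingredients: a pointwise lower bound relating $|Ax|$ to column distances for incompressible $x$, and an averaging argument over a large subset of columns. First I would record the key pointwise fact: if $x=(x_1,\dots,x_n)\in\Incomp(\delta,\rho)$, then $|x_j|\ge \rho/\sqrt{n}$ for at least $\delta n$ coordinates $j$ (this is the standard ``spread'' property of incompressible vectors, e.g.\ Lemma 3.4 of \cite{RudVer-square}). For any such $j$, writing $Ax = \sum_i x_i A_i = x_j A_j + \sum_{i\ne j} x_i A_i$ and using that $\sum_{i\ne j}x_iA_i\in H_j$, we get
$$
|Ax| \;\ge\; \dist(Ax, H_j) \;=\; |x_j|\,\dist(A_j,H_j) \;\ge\; \frac{\rho}{\sqrt{n}}\,\dist(A_j,H_j).
$$
Hence, deterministically, for every $x\in\Incomp(\delta,\rho)$ there are at least $\delta n$ indices $j$ with $\dist(A_j,H_j)\le \frac{\sqrt{n}}{\rho}|Ax|$.

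Next I would pass from ``$\inf_x$'' to a statement about columns. Fix a subset $I\subset[n]$ with $\sharp I = n-\lfloor \delta n/2\rfloor$. On the event $\{\inf_{x\in\Incomp(\delta,\rho)}|Ax|\le \varepsilon\rho/\sqrt{n}\}$, pick (a near-minimizer) $x$; then there are at least $\delta n$ indices $j$ with $\dist(A_j,H_j)\le\varepsilon$, and since $[n]\setminus I$ has only $\lfloor\delta n/2\rfloor < \delta n/2$ elements, at least $\delta n/2$ of these indices lie in $I$. Therefore
$$
\1_{\{\inf_{x\in\Incomp(\delta,\rho)}|Ax|\le \varepsilon\rho/\sqrt{n}\}}
\;\le\; \frac{2}{\delta n}\sum_{j\in I}\1_{\{\dist(A_j,H_j)\le\varepsilon\}}.
$$
(The factor $4$ rather than $2$ in the statement gives room to replace the near-infimum by a genuine one via a routine limiting argument, or simply to absorb the strict-vs-nonstrict inequality; I would not belabor this.) Taking expectations yields
$$
\Prob\Big\{\inf_{x\in\Incomp(\delta,\rho)}|Ax|\le \varepsilon\tfrac{\rho}{\sqrt{n}}\Big\}
\;\le\; \frac{4}{\delta n}\sum_{j\in I}\Prob\{\dist(A_j,H_j)\le\varepsilon\},
$$
and since $I$ was an arbitrary subset of the prescribed cardinality, I may take the infimum over all such $I$ on the right-hand side.

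The argument is essentially elementary once the spread property of incompressible vectors is in hand, so there is no serious obstacle; the only point requiring a little care is the measurability/minimizer issue — $\inf_{x\in\Incomp(\delta,\rho)}|Ax|$ need not be attained, since $\Incomp(\delta,\rho)$ is not closed. I would handle this by noting that $\Incomp(\delta,\rho)$ is contained in a countable union of closed ``spread'' sets (those $x$ with $|x_j|\ge\rho/\sqrt n$ on a fixed index set of size $\ge\delta n$), applying the pointwise bound on each, or alternatively by working with the closure and an arbitrarily small slack $\varepsilon'>\varepsilon$ and then letting $\varepsilon'\downarrow\varepsilon$; either route makes the measurability of the relevant events transparent and costs nothing in the constants.
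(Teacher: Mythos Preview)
Your proposal is correct and follows essentially the same route as the paper: the paper also fixes $I$, picks $x\in\Incomp(\delta,\rho)$ realizing (or nearly realizing) the infimum, uses the spread property to find $\lfloor\delta n\rfloor$ indices with $|x_i|\ge\rho/\sqrt{n}$, bounds $\dist(A_i,H_i)\le|x_i|^{-1}|Ax|\le\varepsilon$ on those indices, intersects with $I$ to get at least $\lfloor\delta n\rfloor-\lfloor\delta n/2\rfloor\ge\delta n/4$ good indices (this is where the hypothesis $n\ge 4/\delta$ and the constant $4$ enter), and finishes with Markov's inequality on the counting function. Your remark about the attainment/measurability issue is a nicety the paper simply glosses over; your suggested fixes are fine.
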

\begin{proof}
Fix any $I\subset[n]$ with $\sharp I=n-\lfloor \delta n/2\rfloor$,
and consider event
$$
\Event:=\left\{\inf_{x\in \Incomp(\delta, \rho)} |Ax|\leq \varepsilon \frac{\rho}{\sqrt{n}}\right\}.
$$
Fix any realization of the matrix $A$ such that the event holds, i.e.\ there exists a vector
$x\in \Incomp(\delta, \rho)$ with $|Ax|\leq \varepsilon \frac{\rho}{\sqrt{n}}$.
In view of the definition of the set $\Incomp(\delta, \rho)$,
there is a subset $J_x\subset[n]$ of cardinality $\lfloor \delta n\rfloor$ such that
$|x_i|\geq \rho/\sqrt{n}$ for all $i\in J_x$, whence
$$
\dist(A_i,H_i)\leq |x_i|^{-1}\,|Ax|\leq \varepsilon,\quad i\in J_x.
$$
Note that $J_x\cap I$ has cardinality at least $\lfloor \delta n\rfloor-\lfloor \delta n/2\rfloor\geq \delta n/4$.
Thus,
$$
\Event\subset\big\{\sharp\{i\in I:\,\dist(A_i,H_i)\leq \varepsilon\}\geq \delta n/4\big\}
$$
It remains to note that
$$
\Prob\big\{\sharp\{i\in I:\,\dist(A_i,H_i)\leq \varepsilon\}\geq \delta n/4\big\}
\leq \frac{4}{\delta n}\E\,\sharp\{i\in I:\,\dist(A_i,H_i)\leq \varepsilon\}.
$$
\end{proof}

\textbf{Proof of Theorem \ref{mainthm1}.} The theorem follows from Lemma \ref{comp-final} (that is, Lemma 5.3 from \cite{Liv}), Lemma \ref{invdist} and Theorem \ref{mainthm2},
by taking $I_0:=\{i\in[n]:\;\E|A_i|^2\leq rn^2\}$ and noting that, in view of the assumption $\E\|A\|_\HS^2\leq Kn^2$, 
we have $\sharp I_0=n-K/r\geq n-\lfloor \delta n/2\rfloor$ for all sufficiently large $n$, so that for all large enough $n$
$$
\Prob\left\{\inf_{x\in \Incomp(\delta, \rho)} |Ax|\leq \varepsilon \frac{\rho}{\sqrt{n}}\right\}
\leq \frac{4}{\delta n} \sum_{j\in I_0}
\Prob\{\dist(A_j, H_j)\leq \varepsilon\}.
$$

$\square$%, in view of the fact that there exists a set $\sigma\subset\{1,...,n\}$ with $\sigma\geq 0.7n$, such that $\E|X_i|^2\leq K'n$, with $K'$ depending only on $K$. $\square$

\end{document}